\newtheorem{defn}{Definition}
\newtheorem{assumption}{Assumption}
\DeclareMathOperator*{\minimize}{minimize}
\DeclareMathOperator*{\argmin}{arg\;min}
\DeclareMathOperator*{\esssup}{ess\;sup}
\DeclareMathOperator*{\essinf}{ess\;inf}
\def\Es{\mathrm{Es}}
\def\rEs{\mathrm{Es}^\prime}
\def\rV{V^\prime}
\def\rOmega{\Omega^\prime}
\def\rtheta{\theta^\prime}
\def\RR{\mathbb{R}}
\def\PP{\mathbb{P}}
\def\X{\mathcal{X}}
\def\Bincdf{\Phi}
\def\Varcdf{\Psi}
\def\q0{r}
\def\qstar{r^\ast}
\def\istar{i^\ast}
\def\xsol{\hat{x}}
\def\qsol{\hat{q}}
\def\pprior{p_{\textit{prior}}}
\def\ppost{p_{\textit{post}}}
\def\ptrial{p_{\textit{trial}}}
\def\Ntrial{N_{\textit{trial}}}
\def\defeq{\mathrel{\mathop:}=}
\def\vv{\vskip\baselineskip}
\numberwithin{equation}{section}
\begin{document}

\title{Chance-constrained optimization with tight confidence bounds} 
\author{Mark Cannon}

\institute{Engineering Science Dept., University of Oxford, OX1 3PJ, UK. mark.cannon@eng.ox.ac.uk}

\maketitle

\begin{abstract}
Convex sample approximations of chance-constrained optimization problems are considered, in which chance constraints are replaced by sets of sampled constraints. 
We propose a randomized sample selection strategy that allows tight bounds to be derived on the probability that the solution of the sample approximation is feasible for the original chance constraints.
These confidence bounds are shown to be tighter than the bounds that apply if samples are selected according to optimal or greedy discarding strategies. 
We further show that the same confidence bounds apply to solutions that are obtained from a two stage process in which a sample approximation of a chance-constrained problem is solved, then an empirical measure of the violation probability of the solution is obtained by counting the number of violations of an additional set of sampled constraints. 
We use this result to design a repetitive scenario approach that meets required tolerances on violation probability given any specified a priori and a posteriori probabilities. 
These bounds are tighter than confidence bounds available for previously proposed repetitive scenario approaches, and we show that the posterior bounds are exact for a particular problem subclass. 
The approach is illustrated through numerical examples, and extensions to problems involving multiple chance constraints are discussed.
\end{abstract}

\keywords{Chance Constraints \and Randomized Methods \and Stochastic Programming}
\subclass{90C15 \and 90C25 \and 90C34 \and 68W20}

\section{Introduction}

An important class of optimization problems involves chance constraints, namely constraints dependent on stochastic parameters, which are required to hold with specified probabilities.
Solution methods and applications for optimization under chance constraints were first considered in the context of problems in economics and management~\cite{charnes59,charnes63}. More recently, chance-constrained optimization has been applied to diverse problems in
finance~\cite{gaivoronski05,boyd16}, process design~\cite{wendt02,calafiore06},
 model predictive control~\cite{cannon09,schildbach14} and building control~\cite{ma12}. Further applications and references are discussed in~\cite{ahmed08}.

For problems with constraints that may be violated up to prescribed limits on violation probability, chance constraints are less stringent than their robust counterparts, which impose constraints for all realizations of uncertainty. However, methods of handling chance constraints using explicit probability distributions can lead to intractable optimization problems. This motivates the use of scenario or sample-based methods in which
constraints are imposed for finite sets of independent samples of the uncertain parameters. These approaches have the advantages that convexity is preserved, assuming that the constraints are convex in the decision variables for all uncertainty realizations, and that probabilistic bounds can be determined on the confidence with which the solution satisfies constraints~\cite{calafiore05,calafiore06,campi08}.
%
%
%
In order to keep computation
within practicable limits, it is important to understand how the sample size affects the accuracy with which the solution of the sampled problem approximates the solution of the chance-constrained problem.

The seminal papers~\cite{campi08,calafiore10} provide bounds on the confidence that a decision variable satisfies a chance constraint, conditioned on the fact that the decision variable is optimal for a sampled problem in which the chance constraint is replaced by a randomly extracted set of sampled constraints.
These bounds are tight in the sense that they cannot be improved without additional information about the sampled problem, and they are exact (i.e.~they coincide with the actual distribution of violation probabilities) for a particular problem subclass.
%
However, since this approach assumes that the sampled problem invokes the entire set of sampled constraints, a high level of confidence necessitates a large number of samples and hence a low probability of constraint violation. Consequently the approach cannot generally
deliver a high degree of accuracy of approximation for problems involving chance constraints with violation probabilities that are not close to zero.

Alternative formulations, which are more suitable for approximating chance constraints with arbitrary violation probabilities, use a certain proportion of parameter samples to define a set of sampled constraints and discard the rest.
Bounds on the confidence with which a solution of the resulting sampled problem satisfies a given chance constraint are derived in~\cite{calafiore10,campi11}.
However, these bounds are obtained under the assumption that sampled contraints are discarded optimally with respect to the objective function, or that constraint selection heuristics are used to approximate an optimal sample discarding procedure. The solutions of the sampled problem may therefore have poor generalization properties, and we show in this paper that, relative to a randomized sample discarding procedure, this leads to a lower confidence of satisfying the underlying chance constraints.  


A third type of scenario approach for chance-constrained programming 
selects a solution from among the solutions of a set of sampled problems, with constraints defined in each case by an independently extracted set of parameter samples~\cite{chamanbaz16,calafiore17}. By incorporating \textit{a posteriori} empirical constraint violation tests based on additional parameter samples that are not used in the definition of the sampled problem, this approach can potentially provide tighter bounds on the confidence of satisfying an associated chance constraint. However~\cite{chamanbaz16} does not provide \textit{a priori} confidence bounds, and the posterior confidence bounds (based on~\cite{alamo09}) are necessarily conservative for the convex setup employed here; on the other hand, the bounds given in~\cite{calafiore17} are non-conservative only for a particular problem subclass, as we show in this paper.

This paper explores and develops the connection between 
the confidence of chance constraint satisfaction for single-shot scenario approaches with and without sample discarding~\cite{campi08,calafiore10,campi11}, and repetitive scenario approaches~\cite{chamanbaz16,calafiore17}.
%
Considering the properties of problems with sampled constraints that are discarded \textit{at random} rather than according to a deterministic algorithm, we derive new bounds on the confidence that the solution is feasible for an associated chance-constrained problem.
These bounds are tight (in the sense that they cannot be improved without additional information on the problem), and, since a combinatorial factor appearing in the bounds of~\cite{calafiore10,campi11} is not required, they demonstrate that a considerable improvement in approximation accuracy can be achieved using a randomized sample discarding approach.
We discuss a procedure for implementing randomized sample discarding based on the repetitive scenario approach.
We describe how to determine the number of sampled problems that must be solved, and their respective sample sizes, in order to ensure specified tolerances on the violation probability of the solution for any given prior and posterior probabilities. The resulting posterior confidence bound coincides with that of~\cite{calafiore17} for a specific problem subclass, however, we show that it is in fact exact for this case and we give tight bounds in all other cases. 

The rest of the paper is organised as follows. Section~\ref{sec:problem_definition} gives the problem definition. Section~\ref{sec:results} gives the main results, then discusses related results on the optimal value of the objective function and comparisons with existing results for deterministic sample discarding procedures. Section~\ref{sec:algorithm} describes an algorithm for approximating the solution of a chance-constrained problem with specified prior and posterior confidence bounds on constraint violation probability. Two examples are given to illustrate the algorithm: an application to the problem of determining the smallest hypersphere containing a given probability mass, and an application to a finite horizon optimal control problem considered in~\cite{calafiore17}, generalized to the case of multiple chance constraints. Section~\ref{sec:conclusions} draws conclusions.




\section{Problem definition and assumptions}\label{sec:problem_definition}

Consider the chance-constrained optimization problem with decision
variable $x\in\RR^n$:
\begin{equation}\label{eq:ccp}
\begin{aligned}
  \minimize_{x\in\X}  & \ \ c^\top x \\
  \text{subject to} & \ \ \PP\bigl\{ f(x,\delta) > 0 \bigr\} \leq \epsilon .
\end{aligned}
\end{equation}
Here $\epsilon$ is a specified probability, $\delta \in \Delta
\subseteq \RR^d$ is a vector of random parameters and $\PP$ a probability measure defined on $\Delta$. 
Note that  $\epsilon$ can take any value in the interval $[0,1]$, and in particular $\epsilon$ is not assumed to be close to $0$.
The domain $\X\subset\RR^{n}$ of the decision variable $x$ and the function $f:\RR^{n} \times \Delta \to \RR$ satisfy the following assumption.

\begin{assumption}\label{assump:convexity}
For all $\delta\in\Delta$, $f(\cdot,\delta)$ is convex and lower-semicontinuous, and $\X$ is compact and convex.
\end{assumption}


The chance constraint $\PP
\{ f(x,\delta) > 0 \} \leq \epsilon$ is not necessarily convex in $x$
(except for certain special cases, see e.g.~\cite{prekopa95}), and problem (\ref{eq:ccp}) is therefore nonconvex in general. To avoid the computational difficulties associated with the chance constraint in (\ref{eq:ccp}), we consider an approximate problem formulation using samples of the uncertain parameter $\delta$.
%
Let $\omega_m = \{\delta^{(1)},\ldots,\delta^{(m)}\}\in\Delta^m$ denote a collection of $m$ independent identically distributed (i.i.d.) samples $\delta^{(i)}$ of the random variable~$\delta$. The sample indices are assumed to be statistically independent of the sample values, so that $\omega_{\q0} = \{\delta^{(1)},\ldots,\delta^{(\q0)}\}$ denotes a randomly selected subset of $\omega_m$, for any $\q0<m$.

To motivate a discussion of sample selection methods, we define (following~\cite{calafiore10,campi11}) the
sample approximation of~(\ref{eq:ccp}) with optimal sample discarding as
\begin{equation}\label{eq:sp}
\begin{aligned}
\minimize_{\substack{\omega \subseteq \omega_m \\ x \in\X}} & \ \ c^\top x \\
\text{subject to}  
& \ \  f(x,\delta) \leq 0 \ \text{ for all } \delta\in\omega \\
& \ \ \lvert \omega \rvert = q
\end{aligned}
\end{equation}
for a given integer $q$, with $n \leq q \leq m$. 
%
The optimal values of $\omega$ and $x$ in (\ref{eq:sp}) are denoted $\omega^\ast$ and $x^\ast(\omega^\ast)$ respectively.

Since $\X$ is compact and convex, and since $f(x,\delta)\leq 0$ is a convex constraint on $x$,
the sampled problem (\ref{eq:sp}) can be expressed as a mixed integer program with a convex continuous relaxation.
%
This implies that (\ref{eq:sp}) can be solved exactly using a branch
and bound approach (see e.g.~\cite{kall94}). However, unless $m$ is
large, the solution $x^\ast(\omega^\ast)$ can have poor generalization
properties
since, as discussed in Section~\ref{sec:results} of this paper, the available bounds
on the confidence that $x^\ast(\omega^\ast)$ satisfies the chance
constraint of (\ref{eq:ccp}) are not tight for general uncertainty distributions. Moreover the computation required to solve for $x^\ast(\omega^\ast)$ grows rapidly with $m$ and $m-q$. 

In this paper we therefore consider problems with sampled constraints that
are defined by randomly selecting subsets of the multisample $\omega_m$. Let 
\begin{equation}\label{eq:xdef}
x^\ast( \omega ) = \arg\min_{x\in\X} \ c^\top x 
\ \text{ subject to } \ 
f(x,\delta) \leq 0 
\ \text{ for all } \
\delta \in \omega
\end{equation}
%
and define $\Omega_q(\omega_m)$ for any given $q\leq m$ as the collection of all $q$-element subsets $\omega$ of $\omega_m$ such that $x^\ast(\omega)$ violates the constraint $f(x,\delta) \leq 0$ for all $\delta\in\omega_m\setminus\omega$, i.e.
\begin{equation}\label{eq:Omegadef}
\Omega_q(\omega_m) = \bigl\{ \omega \subseteq \omega_m :
\lvert \omega \rvert = q 
\text{ and } 
f\bigl( x^\ast(\omega), \delta \bigr) > 0
\text{ for all } 
\delta \in \omega_m\setminus\omega \bigr\} .
\end{equation}
We make the following assumptions on (\ref{eq:xdef}).

\begin{assumption}\label{ass:sp_feas}
  The optimization (\ref{eq:xdef}) is almost surely feasible for $\omega=\omega_m$.
\end{assumption}



%

\begin{assumption}\label{ass:greedy_feas}
The solution of (\ref{eq:xdef}) for any $\omega\subseteq\omega_m$ satisfies $f(x^\ast(\omega),\delta ) \neq 0$ for all $\delta\in \omega_m \setminus \omega$, with probability~$1$.
\end{assumption}

The feasibility requirement of Assumption~\ref{ass:sp_feas} is trivially satisfied if the robust optimization corresponding to~(\ref{eq:ccp}) with $\epsilon = 0$ is feasible. Clearly Assumption~\ref{ass:sp_feas} could be restrictive since it is equivalent to the requirement that, with probability~$1$, an $x$ exists satisfying the constraints of~(\ref{eq:xdef}) with $\omega=\omega_r$, for any $r\leq m$. We note, however, that the results of this paper could be extended to situations in which~(\ref{eq:xdef}) has a non-zero probability of infeasibility by using a framework for analysis such as that described in~\cite{calafiore10}.
%
%
On the other hand, by convexity Assumption~\ref{ass:greedy_feas} holds if and only if, for any $r<m$, problem (\ref{eq:xdef}) with $\omega=\omega_r$ is non-degenerate with probability~$1$ (i.e.~the dual of problem~(\ref{eq:xdef}) almost surely has a unique solution~\cite{fletcher00}).
%
%

In general $\Omega_q(\omega_m)$ may contain more than one subset of $\omega_m$. In the sequel we refer to each distinct $\omega\in\Omega_q(\omega_m)$ as a  \textit{level-$q$ subset of $\omega_m$} and to the corresponding solutions $x^\ast(\omega)$ of (\ref{eq:xdef}) for $\omega\in\Omega_q(\omega_m)$ as \textit{level-$q$ solutions}.

We define the essential (constraint) set, $\Es(\omega)$, of (\ref{eq:xdef}) for given $\omega$ as follows (see also \cite[Def.~2.9]{calafiore10}). 

\begin{defn}[Essential set]\label{def:ess_set}
$\Es(\omega) = \{\delta^{(i_1)},\ldots,\delta^{(i_k)}\}\subseteq\omega$ is an essential set of problem (\ref{eq:xdef}) if
\begin{enumerate}[(i).]
\item
$x\bigl(\{\delta^{(i_1)},\ldots,\delta^{(i_k)}\}\bigr)=x^\ast(\omega)$, and
\item
$x(\omega\setminus \delta) \neq x^\ast (\omega)$ for all $\delta \in \{\delta^{(i_1)},\ldots,\delta^{(i_k)}\}$. 
\end{enumerate}
\end{defn}


An essential set consists of samples $\delta\in\omega$ that are associated with active constraints at the solution $x^\ast(\omega)$ of (\ref{eq:xdef}).
If Assumptions~\ref{ass:sp_feas} and~\ref{ass:greedy_feas} hold, then $\Es(\omega)$ is necessarily uniquely determined by conditions (i) and (ii).
%
%
%
We define the \textit{maximum support dimension} of (\ref{eq:xdef}), denoted $\bar{\zeta}$, as the least upper bound that holds almost surely on the number of elements in the essential set of (\ref{eq:xdef}) for any size of multisample $\omega$, namely the maximum value of $\esssup_{\omega\in\Delta^m}\lvert \Es(\omega) \rvert$ over all finite integers $m\geq 1$.
%
It is easy to show that $\bar{\zeta}$ (which is equivalent to Helly's dimension for (\ref{eq:sp})~\cite[Def.~3.1]{calafiore10}) cannot be greater than $n$ if Assumptions~\ref{ass:sp_feas} and~\ref{ass:greedy_feas} hold.
Similarly we define the \textit{minimum support dimension} of (\ref{eq:xdef}), denoted $\underline{\zeta}$, as the minimum value of $\essinf_{\omega\in\Delta^m} \lvert \Es(\omega)\rvert$ for all finite $m\geq \bar{\zeta}$. Clearly $\underline{\zeta} \geq 0$ must hold for all problems.


\begin{assumption}\label{ass:support_dim}
The maximum and minimum support dimensions of (\ref{eq:xdef}) satisfy
$\esssup_{\omega\in\Delta^m}\lvert \Es(\omega) \rvert \leq \bar{\zeta}$ for all finite $m\geq 1$ 
and
$\essinf_{\omega\in\Delta^m}\lvert \Es(\omega) \rvert \geq \underline{\zeta}$ for all finite $m\geq \bar{\zeta}$
respectively, for some $\bar{\zeta}$ and $\underline{\zeta}$ such that $n \geq \bar{\zeta} \geq \underline{\zeta}\geq 0$.
\end{assumption}

\section{Main results}\label{sec:results}
The results presented in this section enable a randomized procedure to be constructed that ensures tight \textit{a priori} and \textit{a posteriori} bounds on the confidence of finding a solution of (\ref{eq:xdef}) that satisfies the chance constraint in~(\ref{eq:ccp}).  
For given $x\in\RR^n$, $V(x)$ denotes the violation probability
\[
V(x) = \PP\bigl\{ f(x,\delta) > 0\bigr\} .
\]
We first derive bounds on the conditional probability that $x^\ast(\omega)$ satisfies $V\bigl(x^\ast(\omega)\bigr) \leq \epsilon$ given that $\omega$ is a randomly selected level-$q$ subset of $\omega_m$. 
We then give bounds on the conditional probability that a level-$q$ solution $x^\ast(\omega)$ satisfies $V\bigl(x^\ast(\omega)\bigr) \leq \epsilon$ given that $x^\ast(\omega)$ is a randomly selected level-$\q0$ solution, for any given $\q0 \leq q$. These provide the basis for a posteriori bounds on  the confidence that $x^\ast(\omega)$ satisfies $V\bigl(x^\ast(\omega)\bigr) 
\leq \epsilon$.
Finally we provide bounds on the probability of generating a level-$q$ subset of $\omega_m$ using a randomized sample selection procedure; these bounds make it possible to determine a priori bounds on the probability of obtaining a level-$q$ solution.

The solution of a randomized optimization problem based on a finite multisample cannot in general satisfy with certainty the chance constraint in~(\ref{eq:ccp}). Instead we seek a solution $x\in\X$ such that the constraint violation probability $V(x)$ lies in a given interval, $(\underline{\epsilon},\bar{\epsilon}]$, with a specified level of confidence, $\pprior\in(0,1)$.
Two-sided confidence bounds are important in this context because the violation probability $V(x)$ should be close to $\epsilon$ with a high degree of confidence in order that $x$ approximates the solution of the chance-constrained problem (\ref{eq:ccp}) for any given $\epsilon\in[0,1]$. 

In order to define the probability of an event that depends on the multisample $\omega_m\in\Delta^m$, we use $\PP^m$ to denote the product measure on $\Delta^m$. The binomial distribution function is denoted
\[
\Bincdf(n;N,p) = \sum_{i=0}^n \binom{N}{i} p^i (1-p)^{N-i} ,
\] 
so that $\Bincdf(n;N,p)$ is the probability of $n$ or fewer events occuring in $N$ independent trials, each of which has probability $p$.


\begin{theorem}[Confidence bounds for level-$q$ solutions]
\label{thm:feas_bound}
For any $\epsilon\in [0,1]$ and integers $q$ and $m$ such that $\bar{\zeta} \leq q \leq m$ we have
\begin{equation}\label{eq:lower_bound_ineq}
{\PP}^m \bigl\{ V \bigl(x^\ast(\omega_q)\bigr) \leq \epsilon 
\ | \ \omega_q\in\Omega_q(\omega_m) \bigr\} 
\geq 
\Bincdf( q-\bar{\zeta}; m, 1-\epsilon ) ,
\end{equation}
and, for $\underline{\zeta}\leq q\leq m$,
\begin{equation}\label{eq:upper_bound_ineq}
{\PP}^m \bigl\{ V \bigl(x^\ast(\omega_q)\bigr\} \leq \epsilon 
\ | \ \omega_q\in\Omega_q(\omega_m) \bigr\}
\leq 
\Bincdf( q-\underline{\zeta}; m, 1-\epsilon ) .
\end{equation}
\end{theorem}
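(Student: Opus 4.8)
The approach is to condition on which specific subset becomes the essential set and exploit the i.i.d.\ structure of the multisample. Fix $q$ with $\bar\zeta \le q \le m$. For a level-$q$ subset $\omega_q \in \Omega_q(\omega_m)$, the solution $x^\ast(\omega_q)$ depends on $\omega_q$ only through its essential set $\Es(\omega_q)$, which by Assumption~\ref{ass:support_dim} has cardinality $\zeta$ with $\underline\zeta \le \zeta \le \bar\zeta$. The key combinatorial observation (in the spirit of Campi--Garatti and Calafiore) is the following: for each candidate index set $I \subseteq \{1,\ldots,m\}$ of size $q$ and each essential subset $J \subseteq I$, the event ``$\omega_q = \{\delta^{(i)} : i\in I\}$ is a level-$q$ subset with $\Es(\omega_q) = \{\delta^{(j)}: j \in J\}$'' is determined as follows: $x^\ast(\omega_q) = x^\ast(\{\delta^{(j)}:j\in J\})$ depends only on the $|J|$ samples in $J$; the $q-|J|$ samples in $I \setminus J$ must be satisfied (i.e.\ $f(x^\ast,\delta^{(i)}) \le 0$) but redundant; and the $m-q$ samples outside $I$ must be violated. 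Conditioning on the value of the essential samples, the remaining $m - |J|$ samples are i.i.d., and the conditional probability that exactly the right ones are satisfied/violated is $(1 - V(x^\ast))^{q-|J|} V(x^\ast)^{m-q}$ times a multinomial count.

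\medskip

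\noindent Carrying this out, I would write ${\PP}^m\{V(x^\ast(\omega_q)) \le \epsilon,\ \omega_q \in \Omega_q(\omega_m)\}$ as a sum over all essential sets $J$ and all ways of completing $J$ to a level-$q$ subset, integrating out the essential samples against $\PP^{|J|}$ on the region $\{V(x^\ast(J)) \le \epsilon\}$. Writing $v = V(x^\ast(J))$ and $\zeta = |J|$, the inner count of completions gives a factor $\binom{m-\zeta}{q-\zeta}$ (choose which of the remaining $m-\zeta$ samples are the satisfied-but-redundant ones inside $I\setminus J$), so the probability contribution is $\binom{m-\zeta}{q-\zeta}(1-v)^{q-\zeta} v^{m-q}$. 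Summing the unconditional version ($V \le \epsilon$ dropped) over $J$ gives the denominator ${\PP}^m\{\omega_q \in \Omega_q(\omega_m)\}$. The ratio is therefore a weighted average, over the (random) essential-set size $\zeta$ and essential-set contents, of conditional probabilities of the form $\PP\{ \mathrm{Beta\text{-}type\ event}\}$; concretely, conditioning further on $\zeta$, the distribution of $v = V(x^\ast(\omega_q))$ has density proportional to $(1-v)^{q-\zeta} v^{m-q}$ on $[0,1]$, i.e.\ $v \sim \mathrm{Beta}(m-q+1, q-\zeta+1)$, and $\PP\{v \le \epsilon\} = \Bincdf(q - \zeta; m, 1-\epsilon)$ by the standard Beta--Binomial identity.

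\medskip

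\noindent The final step is monotonicity: $\Bincdf(q - \zeta; m, 1-\epsilon)$ is nonincreasing in $\zeta$ (fewer ``successes'' allowed as $\zeta$ grows), so since $\underline\zeta \le \zeta \le \bar\zeta$ almost surely, every term in the weighted average is bounded below by $\Bincdf(q-\bar\zeta; m, 1-\epsilon)$ and above by $\Bincdf(q-\underline\zeta; m, 1-\epsilon)$; the weighted average inherits both bounds, yielding \eqref{eq:lower_bound_ineq} and \eqref{eq:upper_bound_ineq}. The lower bound needs $q \ge \bar\zeta$ for the binomial argument $q-\bar\zeta$ to be nonnegative, and the upper bound needs $q \ge \underline\zeta$, matching the stated hypotheses.

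\medskip

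\noindent \textbf{Main obstacle.} The delicate point is the bookkeeping that turns ``$\omega_q$ is a \emph{randomly selected} level-$q$ subset'' into the clean Beta density for $v$. One must be careful that each level-$q$ subset of $\omega_m$ is counted with the correct weight, that the map $\omega_q \mapsto \Es(\omega_q)$ is well-defined (guaranteed by Assumptions~\ref{ass:sp_feas}--\ref{ass:greedy_feas}, which also ensure no sample lies exactly on the boundary $f = 0$, so the satisfied/violated dichotomy is exhaustive), and that the integration over essential samples and the combinatorial sum over completions genuinely factorize — this relies on $x^\ast(J)$ being a function of $J$ alone and on the i.i.d.\ property. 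Handling the case where $\Omega_q(\omega_m)$ contains several distinct level-$q$ subsets (so the conditioning event is a union over $J$ and over index sets) without double-counting is where the argument requires the most care; once the factorized sum is set up correctly, the Beta--Binomial identity and the monotonicity in $\zeta$ are routine.
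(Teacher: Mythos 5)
Your overall architecture is the same as the paper's: decompose according to the (random) cardinality $\zeta$ of the essential set, argue that conditional on $\zeta$ and on level-$q$ membership the violation probability follows a Beta law whose distribution function is $\Bincdf(q-\zeta;m,1-\epsilon)$, and finish by monotonicity of $\Bincdf(q-\zeta;m,1-\epsilon)$ in $\zeta$. However, there is a genuine gap in the middle step, and it sits exactly at the point you flag as delicate but do not carry out. The expression $\binom{m-\zeta}{q-\zeta}(1-v)^{q-\zeta}v^{m-q}$ is only the conditional probability, \emph{given} the essential samples $J$ (hence given $v=V(x^\ast(J))$), that the remaining $m-\zeta$ samples split correctly into satisfied-but-redundant and violated ones. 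To obtain the law of $v$ conditional on level-$q$ membership you must weight this by the prior law of $V(x^\ast(J))$ on the event that all $\zeta$ samples of $J$ are of support, and it is precisely this prior, $\mathrm{d}F_\zeta(v)$ with $F_\zeta(v)=v^\zeta$, that supplies a missing factor proportional to $v^{\zeta-1}$. With it the posterior is $\mathrm{Beta}(m-q+\zeta,\,q-\zeta+1)$, which is the law consistent with your stated conclusion $\Bincdf(q-\zeta;m,1-\epsilon)$; the density you actually wrote, proportional to $(1-v)^{q-\zeta}v^{m-q}$, is $\mathrm{Beta}(m-q+1,\,q-\zeta+1)$ and yields a different binomial tail (they agree only when $\zeta=1$). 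So as written the proposal is internally inconsistent, and the inconsistency is not cosmetic: proving $F_\zeta(v)=v^\zeta$ is the nontrivial ingredient, established in the paper as Lemma~\ref{lem:prior_prob} via a Hausdorff moment argument.

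A second, related point: when the support dimension is not unique ($\underline{\zeta}<\bar{\zeta}$), conditioning directly on $\lvert\Es(\omega_q)\rvert=\zeta$ is not obviously benign, since that event could in principle be correlated with the violation level and with level-$q$ membership. The paper circumvents this with the regularized essential set of fixed cardinality (random labels, Definition~\ref{def:ress}), the exact conditional distribution of Lemma~\ref{lem:feas_bound}, and the independence statement of Lemma~\ref{lem:ress_properties}, which together justify the per-$\zeta$ decomposition and give equality for each $\zeta$ before the monotonicity step. Your plan needs an argument playing this role. Finally, a smaller framing issue: the conditioning event in the theorem is that the \emph{fixed} subset $\omega_q=\{\delta^{(1)},\ldots,\delta^{(q)}\}$ lies in $\Omega_q(\omega_m)$, so no summation over index sets $I$, and no double-counting over multiple members of $\Omega_q(\omega_m)$, is required; exchangeability of the samples already reduces the problem to a decomposition over essential subsets $J\subseteq\{1,\ldots,q\}$.
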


Theorem~\ref{thm:feas_bound} (which is proved in Section~\ref{sec:levelq-bounds}) 
provides upper and lower bounds on the 
probability of the event $V(x^\ast(\omega_q)) \leq \epsilon$,
conditioned on the event that $\Omega_q(\omega_m)$ contains $\omega_q$.
Since $\omega_q = \{\delta^{(1)},\ldots,\delta^{(q)}\}$ is statistically identical to a randomly selected subset of $\omega_m$ of cardinality $q$,
a direct consequence of Theorem~\ref{thm:feas_bound}
is that the probability of the event $V(x(\omega)) \leq \epsilon$,
given that $\omega$ is a randomly selected member of $\Omega_q(\omega_m)$, also satisfies the upper and lower bounds in~(\ref{eq:lower_bound_ineq}) and (\ref{eq:upper_bound_ineq}).

Whenever $q<m$ and $\bar{\zeta} > 1$, the lower confidence bound in~(\ref{eq:lower_bound_ineq}) is greater than the bound derived in~\cite{calafiore10,campi11} on the probability that the solution of~(\ref{eq:sp}) (in which samples are discarded optimally) satisfies $V(x^\ast(\omega^\ast))\leq \epsilon$; this is discussed in detail in Section~\ref{sec:comparison}. The resulting improvement in the confidence bound for a randomly selected level-$q$ solution is significant because a combinatorial factor that appears in the bounds of~\cite{calafiore10,campi11} is not required in (\ref{eq:lower_bound_ineq}). Furthermore, the confidence bounds of Theorem~\ref{thm:feas_bound} hold with equality if the support dimension of~(\ref{eq:xdef}) is unique, i.e.~if $\underline{\zeta} = \bar{\zeta}$. An example of this is the class of fully supported problems, for which $\underline{\zeta} = \bar{\zeta} = n$~\cite{calafiore10,campi11}.

For general values of $m$, $q$ and $n$, it is not computationally tractable to identify all level-$q$ subsets of the multisample $\omega_m$ and then select one at random in order to take advantage of the confidence bounds in Theorem~\ref{thm:feas_bound}. Clearly the optimal solution of~(\ref{eq:sp}), if available, could be used to identify
a level-$q$ subset 
(namely $\omega^\ast$), and likewise greedy constraint selection algorithms are able to identify suboptimal level-$q$ subsets (see e.g.~\cite[Sec.~5.1]{calafiore10}). However the deterministic constraint discarding strategies employed by these methods 
cannot be used to select an element of $\Omega_q(\omega_m)$ at random.

Instead we consider a randomized constraint selection strategy. This is based on the observation that the essential set of~(\ref{eq:xdef}) for a randomly chosen subset of $\omega_m$, such as $\omega_{\q0} = \{\delta^{(1)},\ldots,\delta^{(\q0)}\}$ for given $\q0$, is almost surely the essential set of a randomly selected level-$q$ subset, where $q$ is the number of elements of the multisample $\omega_m$ that satisfy the constraint $f(x^\ast(\omega_{\q0}),\delta) \leq 0$. Thus we can determine $q$ for given $\q0$ by first solving (\ref{eq:xdef}) for $\omega=\omega_{\q0}$, then counting the number of samples that satisfy $f(x^\ast(\omega_{\q0}),\delta) \leq 0$ and setting $q=\theta_{\q0}(\omega_m)$, where
\begin{equation}\label{eq:theta-def}
\theta_{\q0}(\omega_m) = \bigl\lvert \bigl\{ \delta\in\omega_m : f\bigl(x^\ast(\omega_{\q0}), \delta \bigr) \leq 0 \bigr\} \bigr\rvert .
\end{equation}
%
Since $\omega_{\q0}$ is statistically independent of the samples contained in $\omega_m\setminus \omega_{\q0}$, it can be shown that confidence bounds analogous to those provided by Theorem~\ref{thm:feas_bound} apply to $V(x^\ast(\omega_{\q0}))$. These bounds, which are stated in Theorem~\ref{thm:post_feas_bound} (and proved in Section~\ref{sec:priorpost-bounds}), provide a posteriori bounds on the confidence of constructing a solution of~(\ref{eq:sp}) with a specified constraint violation probability. 


\begin{theorem}[A posteriori confidence bounds conditioned on $\theta_{\q0}(\omega_m)$]
\label{thm:post_feas_bound}
For any $\epsilon\in [0,1]$  and integers $\q0$, $q$ and $m$ such that $\bar{\zeta}\leq \q0\leq q\leq m$ we have
\begin{equation}\label{eq:post_lower_bound_ineq}
{\PP}^m \bigl\{ V \bigl(x^\ast(\omega_{\q0})\bigr) \leq \epsilon 
\ | \ \theta_{\q0}(\omega_m) = q \bigr\} 
\geq 
\Bincdf( q-\bar{\zeta}; m, 1-\epsilon ) ,
\end{equation}
and, for $\underline{\zeta}\leq \q0\leq q\leq m$,
\begin{equation}\label{eq:post_upper_bound_ineq}
{\PP}^m \bigl\{ V \bigl(x^\ast(\omega_{\q0})\bigr) \leq \epsilon 
\ | \ \theta_{\q0}( \omega_m) = q \bigr\} 
\leq 
\Bincdf( q-\underline{\zeta}; m, 1-\epsilon ) .
\end{equation}
\end{theorem}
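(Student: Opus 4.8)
The plan is to reduce Theorem~\ref{thm:post_feas_bound} to Theorem~\ref{thm:feas_bound}, exploiting the fact that the \emph{a posteriori} count $\theta_{\q0}(\omega_m)$ identifies a level-$q$ subset of $\omega_m$. First I would record a deterministic observation: whenever $\theta_{\q0}(\omega_m)=q$, the set $S=\{\delta\in\omega_m : f(x^\ast(\omega_{\q0}),\delta)\le 0\}$ of samples satisfied by $x^\ast(\omega_{\q0})$ has cardinality $q$, contains $\omega_{\q0}$, and belongs to $\Omega_q(\omega_m)$ with $x^\ast(S)=x^\ast(\omega_{\q0})$ and $\Es(S)=\Es(\omega_{\q0})$; in particular $V(x^\ast(\omega_{\q0}))=V(x^\ast(S))$. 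This follows from convexity---since $\omega_{\q0}\subseteq S$, the point $x^\ast(\omega_{\q0})$ is feasible for~(\ref{eq:xdef}) with $\omega=S$ and attains the same objective value as $x^\ast(S)$, so the two minimisers coincide---together with Assumption~\ref{ass:greedy_feas}, which guarantees that no sample of $\omega_m\setminus\omega_{\q0}$ lies on $\{f(x^\ast(\omega_{\q0}),\delta)=0\}$, so that $S$ is well defined and every constraint active at $x^\ast(\omega_{\q0})$ lies in $\omega_{\q0}$.

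The second step is the probabilistic reduction. Conditioning on $\omega_{\q0}$, write $v=V(x^\ast(\omega_{\q0}))$. The samples in $\omega_m\setminus\omega_{\q0}$ are fresh and independent of $\omega_{\q0}$, and each satisfies $f(x^\ast(\omega_{\q0}),\delta)\le 0$ with probability $1-v$, so $\theta_{\q0}(\omega_m)-\q0$ is $\mathrm{Binomial}(m-\q0,\,1-v)$ given $\omega_{\q0}$, whence $\PP^m\{\theta_{\q0}(\omega_m)=q\mid\omega_{\q0}\}=\binom{m-\q0}{q-\q0}(1-v)^{q-\q0}v^{m-q}$. By Bayes' rule,
\[
{\PP}^m\bigl\{ V(x^\ast(\omega_{\q0}))\le\epsilon \ | \ \theta_{\q0}(\omega_m)=q \bigr\}
=\frac{\EE\bigl[\mathbf{1}\{v\le\epsilon\}\,(1-v)^{q-\q0}v^{m-q}\bigr]}{\EE\bigl[(1-v)^{q-\q0}v^{m-q}\bigr]},
\]
the expectation taken over $v=V(x^\ast(\omega_{\q0}))$ for a size-$\q0$ multisample; the factor $\binom{m-\q0}{q-\q0}$ cancels. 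Equivalently, decomposing the event $\{\theta_{\q0}(\omega_m)=q\}$ over the $\binom{m-\q0}{q-\q0}$ equiprobable candidate sets $S$ and using exchangeability of the i.i.d.\ samples, this conditional probability equals $\PP^m\{V(x^\ast(\omega_q))\le\epsilon\mid\omega_q\in\Omega_q(\omega_m),\ \Es(\omega_q)\subseteq\omega_{\q0}\}$, i.e.\ the conditioning event of Theorem~\ref{thm:feas_bound} augmented by the requirement that the essential set of the level-$q$ solution lie among the first $\q0$ samples.

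It remains to evaluate the right-hand side. I would substitute the mixture decomposition underlying Theorem~\ref{thm:feas_bound}: under Assumptions~\ref{ass:sp_feas}--\ref{ass:support_dim} the law of $V(x^\ast(\omega_N))$ equals $\sum_{\zeta=\underline{\zeta}}^{\bar{\zeta}}\binom{N}{\zeta}(1-v)^{N-\zeta}\mu_\zeta(dv)$, where $\mu_\zeta(\cdot)=\PP^\zeta\{V(x^\ast(\omega_\zeta))\in\cdot,\ \Es(\omega_\zeta)=\omega_\zeta\}$ is a fixed finite measure and the mixture has total mass $1$ for every $N\ge\bar{\zeta}$. Taking $N=\q0$, the factor $(1-v)^{\q0-\zeta}$ combines with $(1-v)^{q-\q0}$ into $(1-v)^{q-\zeta}$, so the right-hand side above equals $\sum_\zeta\binom{\q0}{\zeta}\int_0^\epsilon(1-v)^{q-\zeta}v^{m-q}\mu_\zeta(dv)$ divided by $\sum_\zeta\binom{\q0}{\zeta}\int_0^1(1-v)^{q-\zeta}v^{m-q}\mu_\zeta(dv)$---exactly the $\zeta$-indexed integrals that appear when proving Theorem~\ref{thm:feas_bound}, except combined with weights $\binom{\q0}{\zeta}$ rather than $\binom{q}{\zeta}$; equivalently each $\zeta$-term is reweighted by $\binom{\q0}{\zeta}/\binom{q}{\zeta}$, the probability (under the uniform labelling of the first $q$ samples) of $\Es(\omega_q)\subseteq\omega_{\q0}$. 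Since the bounds $\Bincdf(q-\bar{\zeta};m,1-\epsilon)$ and $\Bincdf(q-\underline{\zeta};m,1-\epsilon)$ hold termwise over $\zeta\in[\underline{\zeta},\bar{\zeta}]$, with $\Bincdf(q-\zeta;m,1-\epsilon)$ monotone in $\zeta$, this reweighting leaves the two-sided bounds intact, giving~(\ref{eq:post_lower_bound_ineq}) and~(\ref{eq:post_upper_bound_ineq}); when $\underline{\zeta}=\bar{\zeta}$ the single term cancels and both hold with equality.

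The step I expect to be the main obstacle is this last one---showing that reweighting the essential-set-size mixture does not degrade the binomial bounds. If the proof of Theorem~\ref{thm:feas_bound} is organised so as to bound the contribution of each essential-set size $\zeta$ between $\Bincdf(q-\bar{\zeta};m,1-\epsilon)$ and $\Bincdf(q-\underline{\zeta};m,1-\epsilon)$, this is immediate; otherwise one reruns that argument with the weights $\binom{\q0}{\zeta}$, which is routine but requires carrying the normalisation $\sum_\zeta\binom{N}{\zeta}\int(1-v)^{N-\zeta}\mu_\zeta(dv)=1$ through the estimates. A secondary point needing care is the almost-sure well-definedness of $x^\ast(\cdot)$, $\Es(\cdot)$ and of the set $S$, which relies on Assumptions~\ref{ass:sp_feas}--\ref{ass:support_dim} and on the stated ranges $\bar{\zeta}\le\q0\le q\le m$ (resp.\ $\underline{\zeta}\le\q0\le q\le m$) ensuring the binomial arguments are nonnegative.
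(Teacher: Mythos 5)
Your first two steps are sound, and in one respect more careful than the paper's own opening move: on $\{\theta_{r}(\omega_m)=q\}$ the satisfied set $S$ is indeed a level-$q$ subset with $x^\ast(S)=x^\ast(\omega_{r})$ (convexity plus Assumption~\ref{ass:greedy_feas}), your Bayes ratio $\EE[\mathbf{1}\{v\le\epsilon\}(1-v)^{q-r}v^{m-q}]/\EE[(1-v)^{q-r}v^{m-q}]$ is correct, and you rightly note that the conditioning event is not $\{\omega_q\in\Omega_q(\omega_m)\}$ alone but also forces $\Es(\omega_q)\subseteq\omega_{r}$. The paper avoids your computation altogether: by independence and exchangeability of the samples in $\omega_m\setminus\omega_{r}$ it replaces conditioning on $\theta_{r}(\omega_m)=q$ by conditioning on $\omega_q\in\Omega_q(\omega_m)$, observes that $V(x^\ast(\omega_{r}))=V(x^\ast(\omega_q))$ on this event, and then simply invokes Theorem~\ref{thm:feas_bound}, whose proof (via conditioning on $\lvert\Es(\omega_q)\rvert=k$) already delivers the value $\Bincdf(q-k;m,1-\epsilon)$ for each essential-set size and hence is insensitive to the reweighting your extra conditioning introduces.

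The genuine gap is your final step. The claim that each $\zeta$-component ratio $\int_0^\epsilon(1-v)^{q-\zeta}v^{m-q}\mu_\zeta(dv)\big/\int_0^1(1-v)^{q-\zeta}v^{m-q}\mu_\zeta(dv)$ lies between $\Bincdf(q-\bar{\zeta};m,1-\epsilon)$ and $\Bincdf(q-\underline{\zeta};m,1-\epsilon)$ \emph{termwise} is precisely the nontrivial content of the theorem, and it does not follow from the only structural fact you propose to carry through, namely the normalisation $\sum_\zeta\binom{N}{\zeta}\int_0^1(1-v)^{N-\zeta}\mu_\zeta(dv)=1$ for all $N$. That identity constrains the $\mu_\zeta$ only jointly, not individually: with $\underline{\zeta}=1$, $\bar{\zeta}=2$, take $\mu_1$ to be a point mass of weight $w$ at some $v_0\in(0,1)$ with $w\le v_0$ and $\mu_2(dv)=2\bigl(v-w\,\mathbf{1}\{v\ge v_0\}\bigr)dv$; this pair satisfies the normalisation for every $N$, yet the $\zeta=1$ ratio equals $0$ for all $\epsilon<v_0$, violating the claimed lower bound. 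In other words, the conditional law of $V$ given the support-set size is not pinned down by these constraints ($\mu_\zeta$ need not be proportional to $v^{\zeta-1}dv$), so "carrying the normalisation through the estimates" cannot close the argument. What supplies the missing control in the paper is the regularisation device: the regularised essential sets $\rEs_k$ built from i.i.d.\ random labels, for which Lemma~\ref{lem:prior_prob} gives the exact law $v^k$, Lemma~\ref{lem:ress_properties} gives independence from $\lvert\Es\rvert$, and Lemma~\ref{lem:feas_bound} then gives the exact conditional binomial distribution, with the sandwich $\rV_{\underline{\zeta}}\le V\le\rV_{\bar{\zeta}}$ producing the two-sided bounds. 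Either import that machinery to justify your per-$\zeta$ step, or take the paper's shorter route and reduce directly to Theorem~\ref{thm:feas_bound}.
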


Theorem~\ref{thm:post_feas_bound} provides tight bounds on the conditional distribution of $V(x^\ast(\omega_{\q0}))$ given the value of $\theta_{\q0}(\omega_m)$. But $\theta_{\q0}(\omega_m)$ depends on the random sample $\omega_m$, and, for arbitrary $\q0$, there may be a only small probability that $\theta_{\q0}(\omega_m)$ lies in the required range in order that $x^\ast(\omega_{\q0})$ has, with a sufficiently high level of confidence, a constraint violation probability in the desired range. However, as we discuss in Section~\ref{sec:algorithm}, it is possible to choose the value of $\q0$ so as to maximize the probability that $\theta_{\q0}(\omega_m)$ lies in the required range. For this we make use of the following result (proof of which is given in Section~\ref{sec:priorpost-bounds}).

\begin{theorem}[Probability of selecting a level-$q$ subset of $\omega_m$]
\label{thm:prior_bound}
For any  integers $\q0$, $q$ and $m$ such that $\bar{\zeta} \leq \q0 \leq q \leq m$ we have
\begin{equation}\label{eq:prior_bound}
{\PP}^m \bigl\{ \theta_{\q0}(\omega_m) = q \bigr\}
\geq \binom{m-\q0}{q-\q0} \min_{\zeta\in[\underline{\zeta},\bar{\zeta}]} \frac{B(m-q+\zeta, q-\zeta+1)}{B(\zeta, \q0-\zeta+1)} .
\end{equation}
and
\begin{equation}\label{eq:prior_bound_fs}
{\PP}^m \bigl\{ \theta_{\q0}(\omega_m) = q \bigr\}
\leq \binom{m-\q0}{q-\q0} \max_{\zeta\in[\underline{\zeta},\bar{\zeta}]} \frac{B(m-q+\zeta, q-\zeta+1)}{B(\zeta, \q0-\zeta+1)} ,
\end{equation}
where $B(a,b)$ denotes the beta function for integers $a$ and $b$.
\end{theorem}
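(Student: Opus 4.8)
The plan is to condition on the first $\q0$ samples $\omega_{\q0}$ and exploit the independence of the remaining $m-\q0$ samples. For a fixed $\omega_{\q0}$, every $\delta\in\omega_{\q0}$ satisfies $f(x^\ast(\omega_{\q0}),\delta)\leq 0$ by the definition~(\ref{eq:xdef}) of $x^\ast(\omega_{\q0})$, whereas each of the $m-\q0$ samples in $\omega_m\setminus\omega_{\q0}$ is i.i.d.\ and independent of $\omega_{\q0}$ and, by Assumption~\ref{ass:greedy_feas}, has $f(x^\ast(\omega_{\q0}),\delta)=0$ with probability $0$, hence $f(x^\ast(\omega_{\q0}),\delta)\leq 0$ with probability exactly $1-V(x^\ast(\omega_{\q0}))$. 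Writing $v:=V(x^\ast(\omega_{\q0}))$, the event $\theta_{\q0}(\omega_m)=q$ is precisely the event that $q-\q0$ of those remaining samples satisfy the constraint, so that
\begin{equation*}
{\PP}^m\bigl\{\theta_{\q0}(\omega_m)=q \,\big|\, \omega_{\q0}\bigr\} = \binom{m-\q0}{q-\q0}(1-v)^{q-\q0}v^{m-q},
\end{equation*}
and taking the expectation over $\omega_{\q0}$ reduces the claim to evaluating $\EE\bigl[(1-v)^{q-\q0}v^{m-q}\bigr]$, i.e.\ to the distribution of the violation probability of the $\q0$-sample solution.

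For this I would invoke the characterisation of that distribution that underlies the proof of Theorem~\ref{thm:feas_bound}: under Assumptions~\ref{assump:convexity}--\ref{ass:support_dim} the law of $V(x^\ast(\omega_{\q0}))$ is a mixture $\int_{[\underline{\zeta},\bar{\zeta}]}\mathrm{Beta}(\zeta,\q0-\zeta+1)\,d\mu(\zeta)$ over essential-set sizes $\zeta$, for some probability measure $\mu$ on $[\underline{\zeta},\bar{\zeta}]$ (conditioned on $\lvert\Es(\omega_{\q0})\rvert=\zeta$ the violation probability has the $\mathrm{Beta}(\zeta,\q0-\zeta+1)$ density $v^{\zeta-1}(1-v)^{\q0-\zeta}/B(\zeta,\q0-\zeta+1)$). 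Substituting this density, using the Beta integral $\int_0^1 v^{m-q+\zeta-1}(1-v)^{q-\zeta}\,dv = B(m-q+\zeta,q-\zeta+1)$, and collecting terms yields
\begin{equation*}
{\PP}^m\bigl\{\theta_{\q0}(\omega_m)=q\bigr\} = \binom{m-\q0}{q-\q0}\int_{[\underline{\zeta},\bar{\zeta}]} \frac{B(m-q+\zeta,\,q-\zeta+1)}{B(\zeta,\,\q0-\zeta+1)}\,d\mu(\zeta).
\end{equation*}
Since $\mu$ is a probability measure supported on $[\underline{\zeta},\bar{\zeta}]$, the integral lies between the minimum and the maximum over $\zeta\in[\underline{\zeta},\bar{\zeta}]$ of the integrand, which are exactly the bounds~(\ref{eq:prior_bound}) and~(\ref{eq:prior_bound_fs}); the hypotheses $\bar{\zeta}\leq\q0\leq q\leq m$ guarantee that the beta-function arguments appearing are positive integers, and when the support dimension is unique ($\underline{\zeta}=\bar{\zeta}$) both bounds hold with equality.

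The only genuinely non-trivial ingredient is the mixture-of-Beta description of the distribution of $V(x^\ast(\omega_{\q0}))$ used in the second step; this is the standard scenario-theory fact that a non-degenerate convex scenario program whose essential set has $\zeta$ elements has a $\mathrm{Beta}(\zeta,\q0-\zeta+1)$ violation probability, and it is precisely the tool already required to prove Theorems~\ref{thm:feas_bound} and~\ref{thm:post_feas_bound}, so it can be cited rather than re-derived. Once it is in hand, everything else is the elementary binomial-and-Beta computation above; a useful consistency check is that summing the displayed expression over $q=\q0,\dots,m$ collapses, by the binomial theorem applied inside the expectation, to $\EE\bigl[(v+(1-v))^{m-\q0}\bigr]=1$.
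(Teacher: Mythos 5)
Your reduction is structurally the same as the paper's: conditional on the first $\q0$ samples, $\theta_{\q0}(\omega_m)=q$ is a Binomial$(m-\q0,1-v)$ event with $v=V(x^\ast(\omega_{\q0}))$, and integrating $v$ against a Beta$(\zeta,\q0-\zeta+1)$ law gives precisely the quantity $\binom{m-\q0}{q-\q0}B(m-q+\zeta,q-\zeta+1)/B(\zeta,\q0-\zeta+1)$ of Lemma~\ref{lem:prior_bound}, after which the min/max bound over the mixing measure on $\{\underline{\zeta},\dots,\bar{\zeta}\}$ is exactly the paper's final total-probability step. The binomial count, the Beta integral, and the normalization check are all correct.

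The gap is the step you propose to discharge by citation. The claim that, conditional on $\lvert\Es(\omega_{\q0})\rvert=\zeta$, the violation probability $V(x^\ast(\omega_{\q0}))$ has exactly the $\mathrm{Beta}(\zeta,\q0-\zeta+1)$ density is \emph{not} a standard scenario-theory fact, and it is not the tool used to prove Theorems~\ref{thm:feas_bound} and~\ref{thm:post_feas_bound} in the form you state it. The standard exact-Beta result is unconditional and holds for fully supported problems; for general problems one only has one-sided bounds, and conditioning on the realized cardinality of the essential set is delicate precisely because that cardinality can be correlated with $V$ (this is the well-known difficulty behind ``wait-and-judge''-type analyses). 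The paper therefore never conditions on $\lvert\Es(\omega_{\q0})\rvert$ directly: it introduces the label-randomized essential set $\rEs_k$, the regularized violation $\rV_k$ and the regularized count $\rtheta_{\q0,k}$, derives the unconditional Beta law of $\rV_k(\omega_{\q0})$ (via Lemma~\ref{lem:prior_prob} and eq.~(\ref{eq:xdef_soln_viol_prob})) and the exact expression for $\PP^m\{\rtheta_{\q0,k}(\omega_m)=q\}$ (Lemma~\ref{lem:prior_bound}), and only then transfers these to $\theta_{\q0}$ by using that $\theta_{\q0}=\rtheta_{\q0,k}$ on the event $\lvert\Es(\omega_{\q0})\rvert=k$ together with the independence statement of Lemma~\ref{lem:ress_properties}. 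In other words, the conditional-Beta mixture you invoke is the \emph{conclusion} of that regularization machinery, not an off-the-shelf input; as written, your argument assumes the hardest part of the theorem. To complete your proof you would need either to reproduce an argument of that kind or to supply an independent proof of the conditional law of $V(x^\ast(\omega_{\q0}))$ given $\lvert\Es(\omega_{\q0})\rvert=\zeta$; the remainder of your computation then goes through unchanged.
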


From Theorem~\ref{thm:prior_bound} it is possible to compute the value of $\q0$ that maximizes the probability with which $\theta_{\q0}(\omega_m)$ lies in any specified range. In conjunction with Theorem~\ref{thm:post_feas_bound}, this allows a priori bounds to be determined on the confidence that the violation probability $V(x^\ast(\omega_{\q0}))$ lies in the desired interval, $(\underline{\epsilon},\bar{\epsilon}]$. 
These confidence bounds make it possible to compute an upper bound on the number of times the procedure of solving (\ref{eq:xdef}) for $x^\ast(\omega_{\q0})$ and determining $\theta_{\q0}(\omega_m)$ must be repeated in order to ensure that a solution is obtained that satisfies $V(x^\ast(\omega_{\q0})) \in (\underline{\epsilon},\bar{\epsilon}]$ with a probability exceeding any given
a priori confidence level $\pprior$.
The proposed solution procedure, which is described in Section~\ref{sec:algorithm}, therefore meets a priori bounds on the confidence of determining a solution with a violation probability in the required range.

The computational cost of determining $\theta_{\q0}(\omega_m)$ in (\ref{eq:theta-def}) for given $x^\ast(\omega_{\q0})$ is typically small compared to the cost of solving (\ref{eq:xdef}) for $x^\ast(\omega_{\q0})$. 
For problems in which large numbers of samples are easy to obtain, this makes the use of large values of $m$ computationally attractive. In particular,
by using a large value of $m$ it is possible to obtain very sharp a posteriori confidence bounds. In such cases the main factor limiting a priori confidence bounds is the structural uncertainty in solutions of~(\ref{eq:sp}), namely the difference between the maximum and minimum support dimensions of~(\ref{eq:xdef}).


\subsection{Confidence bounds for randomly selected level-$q$ solutions}\label{sec:levelq-bounds}

The proof of Theorem~\ref{thm:feas_bound} is derived from the properties of a randomly selected level-$q$ subset of the multisample $\omega_m$. In order to simplify the analysis of problems with non-unique support dimensions (i.e.~with
$\underline{\zeta} \neq \bar{\zeta}$), we introduce a regularized version of the essential set that has a fixed cardinality.
To define this set we assign (similarly to~\cite{calafiore10}) a random label $\lambda^{(i)}$ to each sample $\delta^{(i)}$, where $\lambda^{(i)}$ is uniformly distributed on $[0,1]$ and independent of $\lambda^{(j)}$ for all $i\neq j$.
Furthermore, for any multisample $\omega_m = \{\delta^{(1)},\ldots,\delta^{(m)}\}$ with associated labels $\{\lambda^{(1)},\ldots,\lambda^{(m)}\}$ we define $\Lambda_k(\omega_m)$ as the subset of $\omega_m$ containing the $k$ samples with smallest  labels, so that $\lvert \Lambda_k (\omega_m) \rvert = k$ and $\delta^{(i)} \in \Lambda_k(\omega_m)$ if and only if $\lambda^{(i)} < \lambda^{(j)}$ for all $j$ such that $\delta^{(j)} \in \omega_m \setminus \Lambda_k(\omega_m)$.
The regularized essential set $\rEs_k(\omega_m)$ is defined for a given integer $k$ as follows.

\begin{defn}[Regularized essential set]\label{def:ress}
For a multisample $\omega$ and integer $k$ such that $0\leq k\leq \lvert \omega\rvert$, the regularized essential set is given by
\[
\rEs_k(\omega) = \begin{cases}
\Lambda_k \bigl(\Es(\omega)\bigr) 
& 
\text{if } k \leq \lvert \Es(\omega) \rvert \\
\Es(\omega) \cup \Lambda_\nu \bigl(\omega \setminus \Es(\omega)\bigr) 
&
\text{if } k > \lvert \Es(\omega) \rvert
\end{cases}
\]
where $\nu = k - \lvert \Es(\omega) \rvert$ and $\Es(\omega)$ is the essential set of~(\ref{eq:xdef}).
%
%
\end{defn}

Definition~\ref{def:ress} implies that $\lvert\rEs_k(\omega)\rvert = k$ almost surely, and that
\[
\rEs_{\underline{\zeta}}(\omega) \subseteq \Es(\omega) \subseteq \rEs_{\bar{\zeta}}(\omega).
\]
Using the regularized essential set we define a regularized version of violation probability:
\[
\rV_k(\omega) = \PP \bigl\{ \delta \in \rEs_k\bigl( \omega \cup \{\delta\}\bigr) \bigr\}. 
\]
Thus $\rV_k(\omega)$ is equal to the probability that, for given $k$, the regularized essential set associated with problem (\ref{eq:xdef}) changes when the multisample $\omega$ is extended to include a newly extracted sample $\delta\in\Delta$.
The regularized essential set also allows a regularized version of the set of level-$q$ subsets of $\omega_m$ to be defined for $k \leq q \leq m$ by
\begin{equation}
\label{eq:reg_lq_subsets}
\rOmega_{q,k} (\omega_m)
= \bigl\{ \omega \subseteq \omega_m : \lvert \omega \rvert = q 
\text{ and } 
\delta \in \rEs_k\bigl( \omega \cup \{ \delta\}\bigr)
\text{ for all } 
\delta \in \omega_m\setminus\omega \bigr\}  .
\end{equation}
Definition~\ref{def:ress} implies that $f(x^\ast(\omega),\delta) > 0$ whenever $\delta\in\smash{\rEs_{\underline{\zeta}} ( \omega \cup \{  \delta \} )}$,
and that $\delta\in\rEs_{\bar{\zeta}} ( \omega \cup \{ \delta \} )$ whenever $f(x^\ast(\omega),\delta) > 0$. These properties imply that
$\rV_{\underline{\zeta}}(\omega) \leq V\bigl(x^\ast(\omega)\bigr) \leq \rV_{\bar{\zeta}}(\omega)$,
and similarly
$\rOmega_{q,\underline{\zeta}} (\omega_m) \subseteq \Omega_q(\omega_m) \subseteq \rOmega_{q,\bar{\zeta}} (\omega_m)$.

This section finds (in Lemma~\ref{lem:selec_prob}) the probability that $\omega_{k}=\{\delta^{(1)},\ldots,\delta^{(k)}\}$ is equal to the regularized essential set $\rEs_k(\omega)$ of (\ref{eq:xdef}) for some $\omega\in\smash{\rOmega_{q,k}}(\omega_m)$, for any given $k$ and $q$ with 
$0\leq k\leq q\leq m$. 
This enables the conditional probability that $\rV_k(\omega_q) \leq \epsilon$ given that $\omega_q$ is an element of $\rOmega_{q,k}(\omega_m)$ to be determined (in Lemma~\ref{lem:feas_bound}), and the bounds in Theorem~\ref{thm:feas_bound} on the conditional probability of $V(x^\ast(\omega)) \leq \epsilon$ given that $\omega\in\Omega_q(\omega_m)$ are subsequently derived using this result.  
The approach is based on a fundamental result, stated in Lemma~\ref{lem:prior_prob}, on the distribution of the regularized violation probability $\rV_k(\omega_k)$. Related results are available in the literature (see e.g.~\cite[Eq.~3.2]{campi08} and~\cite[Eq.~3.11]{calafiore10}). 
The proofs of Lemmas~\ref{lem:prior_prob}, \ref{lem:selec_prob} and~\ref{lem:feas_bound} are provided in Appendix~\ref{apdx:proof_sec3}.

\begin{lemma}\label{lem:prior_prob}
For any $v\in[0,1]$ and integers $k$ and $m$ such that 
$0\leq k\leq m$ we have
\[
{\PP}^m\bigl\{ \rV_k (\omega_k) \leq v \bigr\} = v^{k} .
\]
\end{lemma}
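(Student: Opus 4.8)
The plan is to prove Lemma~\ref{lem:prior_prob} by a symmetry/exchangeability argument that identifies $\rV_k(\omega_k)$ with the $(k{+}1)$-st order statistic gap of a sequence of i.i.d.\ uniform random variables, or — more directly — by establishing that $\rV_k(\omega_k)$ is distributed as the maximum of $k$ i.i.d.\ uniform-$[0,1]$ variables, whose distribution function is exactly $v^k$. First I would introduce $N$ further i.i.d.\ samples $\delta^{(k+1)},\ldots,\delta^{(k+N)}$ (with associated labels), and consider the quantity $\theta = \lvert\{\,i\in\{k{+}1,\ldots,k{+}N\} : \delta^{(i)} \in \rEs_k(\omega_k\cup\{\delta^{(i)}\})\,\}\rvert$, i.e.\ the number of the new samples that, when added singly to $\omega_k$, would alter the regularized essential set. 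Conditioned on the value of $\rV_k(\omega_k)=v$, each such event occurs independently with probability $v$, so $\theta$ is $\mathrm{Binomial}(N,v)$ and hence $\EE[\Bincdf(j;N,\rV_k(\omega_k))\mid \rV_k(\omega_k)] $ relates cleanly to the unconditional probability $\PP^{k+N}\{\theta\le j\}$.

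The core combinatorial step is then to compute $\PP^{k+N}\{\theta \le j\}$ directly by exploiting exchangeability of the $k{+}N$ samples together with their labels. The key observation (which is where Definition~\ref{def:ress}, Assumption~\ref{ass:greedy_feas} and the almost-sure uniqueness of the essential set are used) is that $\rEs_k(\omega_k)$ has cardinality exactly $k$ almost surely, and that for an exchangeable collection the event ``$\delta^{(i)}$ would change the regularized essential set of the other $k$ samples'' is, up to relabelling, the event that $\delta^{(i)}$ is among the $k$ ``most binding'' of the $k{+}1$ samples $\{\delta^{(1)},\ldots,\delta^{(k)},\delta^{(i)}\}$ in the order induced by the regularized essential-set construction (with ties broken by the uniform labels). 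By a standard Dirichlet/uniform-spacings argument, the chance that none of the $N$ added samples falls into the ``essential'' region is then obtained by integrating $(1-v)^N$ against the density of $\rV_k(\omega_k)$; matching this against the exchangeability computation forces that density to be $k v^{k-1}$, i.e.\ $\PP^m\{\rV_k(\omega_k)\le v\}=v^k$.

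Alternatively — and this may be the cleaner route to write up — I would argue inductively. For $k=0$, $\rEs_0(\omega)=\emptyset$ always, so every fresh $\delta$ satisfies $\delta\notin\rEs_0(\omega\cup\{\delta\})$, giving $\rV_0(\omega_0)=0$ and $\PP^m\{\rV_0(\omega_0)\le v\}=1=v^0$ for $v\in[0,1]$. For the inductive step, condition on $\omega_{k-1}$ and consider the effect of adjoining $\delta^{(k)}$: using the nested structure $\rEs_{k-1}(\omega)\subseteq\rEs_k(\omega)$ implied by Definition~\ref{def:ress}, together with independence of $\delta^{(k)}$ from $\omega_{k-1}$, one shows $\rV_k(\omega_k)$ conditional on $\{\rV_{k-1}(\omega_{k-1})=w\}$ is distributed as $\max(w,U)$ with $U$ uniform on $[0,1]$ independent of $w$; since $\PP\{\max(w,U)\le v\} = v$ on $\{w\le v\}$, and the inductive hypothesis gives $w$ the law with c.d.f.\ $w^{k-1}$, a one-line integration yields $\PP^m\{\rV_k(\omega_k)\le v\}=\int_0^v v\cdot (k-1)w^{k-2}\,dw = v\cdot v^{k-1}=v^k$.

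The main obstacle I anticipate is justifying rigorously the step ``adding one fresh sample replaces the regularized essential set by the regularized essential set of the enlarged multisample in the way the order-statistic picture requires'' — i.e.\ that $\rV_k(\omega_k)$ really does behave like an independent uniform draw maxed against the previous value. This hinges delicately on: (a) the almost-sure constancy $\lvert\rEs_k\rvert=k$, which uses Assumption~\ref{ass:greedy_feas} (non-degeneracy) to rule out ties in the essential-set membership; (b) the fact that the artificial labels $\lambda^{(i)}$ are i.i.d.\ uniform and independent of everything else, so that tie-breaking in the $\Lambda_\nu(\cdot)$ selection is itself exchangeable; and (c) a careful case split according to whether $k\le\lvert\Es(\omega_k)\rvert$ or $k>\lvert\Es(\omega_k)\rvert$ in Definition~\ref{def:ress}, showing that in either case the "new sample changes $\rEs_k$" event has the right conditional probability $\rV_k(\omega_k)$ by construction. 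Everything else is routine integration.
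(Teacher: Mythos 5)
Your second route --- the induction you say you would actually write up --- rests on a false intermediate claim. It is not true in general that, conditional on $\rV_{k-1}(\omega_{k-1})=w$, the quantity $\rV_k(\omega_k)$ is distributed as $\max(w,U)$ with $U$ uniform and independent; in particular $\rV_k(\omega_k)\geq \rV_{k-1}(\omega_{k-1})$ can fail with positive probability. Take the one-dimensional problem $\min x$ subject to $x\geq\delta$ with $\delta$ uniform on $[0,1]$, so $x^\ast(\omega)=\max_{\delta\in\omega}\delta$ and $\underline{\zeta}=\bar{\zeta}=1$. Then $\rV_1(\omega_1)=1-\delta^{(1)}$, while for $k=2$ the regularized essential set of $\omega_2\cup\{\delta\}$ consists of the largest sample together with whichever of the two remaining samples carries the smaller label, so $\rV_2(\omega_2)=1-M\bigl(1-\lambda_{nm}\bigr)$, where $M=\max(\delta^{(1)},\delta^{(2)})$ and $\lambda_{nm}$ is the label of the non-maximal sample of $\omega_2$. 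Conditional on $\rV_1(\omega_1)=w$ (i.e.\ $\delta^{(1)}=1-w$), the event $\delta^{(2)}>\delta^{(1)}$ with $\lambda^{(1)}$ small gives $\rV_2(\omega_2)\approx 1-\delta^{(2)}<w$; a short computation gives $\PP\{\rV_2(\omega_2)\leq w \mid \rV_1(\omega_1)=w\}=w+(1-w)\ln(1-w)>0$, so the conditional law is not even supported on $[w,1]$. The reason is that adjoining $\delta^{(k)}$ can eject earlier samples from the (regularized) essential set, so the ``region a fresh sample must hit'' is not nested as $k$ grows; the fact that your one-line integration still returns $v^k$ is a coincidence of the marginal, not a proof.

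Your first route is, in substance, the paper's proof: conditional on $\rV_k(\omega_k)=v$, each fresh sample independently avoids the regularized essential set with probability $1-v$, and exchangeability together with the almost-sure uniqueness of $\rEs_k(\omega_q)$ (Assumption~\ref{ass:greedy_feas} and Definition~\ref{def:ress}) give $\int_0^1(1-v)^{q-k}\,\mathrm{d}F_k(v)=\binom{q}{k}^{-1}$ for every $q\geq k$, where $F_k$ is the distribution function of $\rV_k(\omega_k)$. The gap in your write-up is precisely the closing step ``matching forces the density to be $kv^{k-1}$'': you may not presuppose that $F_k$ has a density, and you must justify that this family of moment identities determines $F_k$ uniquely. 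The paper does this by noting that the identities constitute a Hausdorff moment problem on $[0,1]$, which is determinate, and that $F_k(v)=v^k$ satisfies them. With that determinacy argument supplied (and the exchangeability event stated as ``$\omega_k$ equals $\rEs_k(\omega_q)$'', whose probability is $\binom{q}{k}^{-1}$, rather than the per-sample ``most binding'' phrasing), your first route becomes the paper's proof; as it stands, neither of your two routes is complete, and the one you prefer is unsound.
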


 The probability that $\omega_k$ is equal to the regularized essential set, $\rEs_k(\omega)$ for some $\omega\in\rOmega_{q,k}(\omega_m)$, where $k\leq q\leq m$, is given by the following result.

\begin{lemma}\label{lem:selec_prob}
For any integers $k$, $q$ and $m$ such that 
$0\leq k\leq q \leq m$ we have
\[
\PP^m \bigl\{ \omega_k = \rEs_k(\omega) \cap \omega\in\rOmega_{q,k}(\omega_m) \bigr\} 
= k \binom{m-k}{q-k} B(m-q+k, q-k + 1) .
\]
\end{lemma}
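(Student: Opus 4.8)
The plan is to condition on the value of the regularized violation probability $\rV_k(\omega_k)$ and count the configurations of the remaining $m-k$ samples that are compatible with the event. Write $E$ for the event $\{\omega_k = \rEs_k(\omega) \text{ for some } \omega\in\rOmega_{q,k}(\omega_m)\}$. The first observation is that, by the definition of $\rOmega_{q,k}(\omega_m)$ in~(\ref{eq:reg_lq_subsets}) together with Definition~\ref{def:ress}, if $\omega_k = \rEs_k(\omega)$ for such an $\omega$ then $\omega$ must consist of $\omega_k$ together with exactly those samples among $\delta^{(k+1)},\ldots,\delta^{(m)}$ that do \emph{not} enter the regularized essential set when adjoined to $\omega$ — and $\omega\in\rOmega_{q,k}$ forces $\lvert\omega\rvert = q$, so exactly $q-k$ of the last $m-k$ samples must stay outside the regularized essential set while the remaining $m-q$ must enter it. Crucially, because $\omega_k=\rEs_k(\omega)$ and $\rEs_k$ has fixed cardinality $k$, whether a further sample $\delta$ enters $\rEs_k(\omega\cup\{\delta\})$ is governed by $\omega_k$ alone (it happens with probability $\rV_k(\omega_k)$, conditionally i.i.d.\ across the extra samples), so the count of favourable arrangements of the last $m-k$ samples becomes a binomial-type calculation once we also force $\omega_k$ itself to be a valid regularized essential set of the sub-problem on $\omega_k$.

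Next I would make this precise. Condition on $\rV_k(\omega_k) = v$. Given this, each of $\delta^{(k+1)},\ldots,\delta^{(m)}$ independently lands ``inside'' $\rEs_k(\omega_k\cup\{\delta\})$ with probability $v$ and ``outside'' with probability $1-v$; we need a specific pattern realising $q-k$ outside and $m-q$ inside. But the event $E$ also requires $\omega_k$ to be self-essential, i.e.\ $\rEs_k(\omega_k)=\omega_k$, which by Lemma~\ref{lem:prior_prob}'s underlying mechanism contributes a factor that I expect to evaluate to $k\,v^{k-1}\,dv$ as the density of the relevant order statistic (the factor $k$ counts which of the $k$ samples carries the ``new'' label when the last one was added; this is exactly the combinatorial content already implicit in the proof of Lemma~\ref{lem:prior_prob}). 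Integrating the product
\[
\binom{m-k}{q-k}\,(1-v)^{q-k}\,v^{m-q}\cdot k\,v^{k-1}
\]
over $v\in[0,1]$ gives
\[
k\binom{m-k}{q-k}\int_0^1 v^{m-q+k-1}(1-v)^{q-k}\,dv
= k\binom{m-k}{q-k}B(m-q+k,\,q-k+1),
\]
which is the claimed identity.

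The main obstacle is justifying the factorisation cleanly: I must argue that, conditional on $\omega_k$ being the regularized essential set of $\omega$, the events ``$\delta^{(j)}$ enters $\rEs_k(\omega\cup\{\delta^{(j)}\})$'' for $j>k$ are mutually independent and each has probability equal to $\rV_k(\omega_k)$, \emph{and} that this is consistent with simultaneously demanding $\rEs_k(\omega_k)=\omega_k$. The subtlety is that adding a sample $\delta^{(j)}$ that stays outside does not change $\rEs_k$, so the ``inside/outside'' outcomes for the different $j$ are genuinely determined by $\omega_k$ (and the label of $\delta^{(j)}$) and hence conditionally i.i.d.; I would spell this out using the defining property that $\rEs_k$ has cardinality exactly $k$ and that $\delta\in\rEs_k(\omega\cup\{\delta\})$ iff $\rEs_k(\omega\cup\{\delta\})\neq\omega_k$, invoking Assumption~\ref{ass:greedy_feas} to rule out ties. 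Handling the ``self-essential'' factor for $\omega_k$ rigorously — extracting the $k\,v^{k-1}$ density — is the one place where I would fall back on the argument already used to prove Lemma~\ref{lem:prior_prob}, essentially re-running it with the extra $m-k$ samples carried along as independent decorations.
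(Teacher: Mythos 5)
Your proposal is correct and follows essentially the same route as the paper's proof: condition on $\rV_k(\omega_k)=v$, note that the event reduces to a binomial pattern in which $q-k$ of the remaining $m-k$ samples satisfy $\delta\notin\rEs_k(\omega_k\cup\{\delta\})$ and $m-q$ satisfy $\delta\in\rEs_k(\omega_k\cup\{\delta\})$, giving the factor $\binom{m-k}{q-k}(1-v)^{q-k}v^{m-q}$, and then integrate against the density $k\,v^{k-1}\,\mathrm{d}v$ furnished by Lemma~\ref{lem:prior_prob} to obtain $k\binom{m-k}{q-k}B(m-q+k,q-k+1)$. The only cosmetic point is that the ``self-essential'' condition $\rEs_k(\omega_k)=\omega_k$ is automatic from Definition~\ref{def:ress} (the regularized essential set of a $k$-element multisample is the whole multisample), so the factor $k\,v^{k-1}\,\mathrm{d}v$ is not the probability of an extra requirement but simply the law of $\rV_k(\omega_k)$ from Lemma~\ref{lem:prior_prob}, exactly as the paper uses it.
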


The confidence bounds of Theorem~\ref{thm:feas_bound} can be established using the following lemma, which provides a subsidiary result on the regularized version of violation probability. The proof of this lemma is based on Lemmas~\ref{lem:prior_prob} and~\ref{lem:selec_prob}, and on the properties of a subset of $\omega_m$ selected at random from $\rOmega_{q,k}(\omega_m)$.

\begin{lemma}[Confidence bounds for regularized violation probabilities]
\label{lem:feas_bound}
For any $\epsilon\in [0,1]$ and integers $k$, $q$ and $m$ such that 
$0\leq k \leq q \leq m$ we have
\begin{equation}\label{eq:reg_lower_bound_ineq}
{\PP}^m \bigl\{ \rV_k (\omega_q) \leq \epsilon 
\ | \ \omega_q\in\rOmega_{q,k}(\omega_m) \bigr\} 
=
\Bincdf( q-k ; m, 1-\epsilon ) .
\end{equation}
\end{lemma}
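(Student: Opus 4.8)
The plan is to compute the conditional probability in~(\ref{eq:reg_lower_bound_ineq}) by combining Lemmas~\ref{lem:prior_prob} and~\ref{lem:selec_prob} with the observation that $\omega_q = \{\delta^{(1)},\ldots,\delta^{(q)}\}$ is statistically indistinguishable from a uniformly random $q$-element subset of $\omega_m$. First I would fix $k$, $q$, $m$ with $0 \le k \le q \le m$ and unwind the conditional probability as the ratio
\[
{\PP}^m \bigl\{ \rV_k(\omega_q) \le \epsilon \cap \omega_q \in \rOmega_{q,k}(\omega_m) \bigr\} \big/ {\PP}^m \bigl\{ \omega_q \in \rOmega_{q,k}(\omega_m) \bigr\}.
\]
The denominator is obtained by summing the probability given in Lemma~\ref{lem:selec_prob} over the choice of which $k$-subset of $\omega_q$ plays the role of $\rEs_k$; since the events ``$\omega' = \rEs_k(\omega)$'' for the $\binom{q}{k}$ distinct $k$-subsets $\omega'$ of $\omega_q$ are disjoint and each has the same probability by exchangeability, and since $\omega_q \in \rOmega_{q,k}(\omega_m)$ forces $\rEs_k(\omega_q) \subseteq \omega_q$, one gets
\[
{\PP}^m \bigl\{ \omega_q \in \rOmega_{q,k}(\omega_m) \bigr\} = \binom{q}{k}\, k \binom{m-k}{q-k} B(m-q+k,\, q-k+1).
\]

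For the numerator I would condition further on which $k$-subset $\omega'\subseteq\omega_q$ equals $\rEs_k(\omega_q)$. Given that $\omega' = \rEs_k(\omega_q)$, the regularized essential set depends only on the $k$ samples in $\omega'$ — by the defining property of $\rEs_k$, adding any of the other $q-k$ samples in $\omega_q$ does not change it — so $\rV_k(\omega_q) = \rV_k(\omega')$, and $\omega'$ is statistically a fresh size-$k$ multisample. The key subtlety is the interaction between the two events: on the event $\omega_q\in\rOmega_{q,k}(\omega_m)$ with $\rEs_k(\omega_q)=\omega'$, the $m-q$ samples outside $\omega_q$ must each land in the ``essential set change'' region $\rEs_k(\omega\cup\{\delta\})$, while the $q-k$ samples in $\omega_q\setminus\omega'$ must each land outside that region. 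Conditioning on $\rV_k(\omega')=v$, these $m-k$ remaining samples are i.i.d.\ and independently fall in the change region with probability $v$; so the joint probability factors, and one should recover the integrand of Lemma~\ref{lem:selec_prob} but now restricted to $v\le\epsilon$, namely
\[
\int_0^{\epsilon} k v^{k-1} \binom{m-k}{q-k} v^{q-k}(1-v)^{m-q}\, dv,
\]
where $k v^{k-1}$ is the density of $\rV_k(\omega')$ coming from Lemma~\ref{lem:prior_prob}. Multiplying by the $\binom{q}{k}$ choices of $\omega'$ gives the numerator.

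Dividing, the $\binom{q}{k} k \binom{m-k}{q-k}$ factors and the $1/B(m-q+k,q-k+1)$ cancel, leaving
\[
\frac{1}{B(m-q+k,\,q-k+1)} \int_0^{\epsilon} v^{q-1}(1-v)^{m-q}\, dv,
\]
which is the regularized incomplete beta function $I_\epsilon(q,\,m-q+1)$; the final step is the standard identity $I_\epsilon(q,m-q+1) = \Bincdf(q - k \,;\, m,\, 1-\epsilon)$ — wait, more precisely $\sum_{i=0}^{m-q} \binom{m}{i}\epsilon^i(1-\epsilon)^{m-i}$, which after re-indexing equals $\Bincdf(m - q;\, m,\, \epsilon) = \Bincdf(q - \text{[something]};\ldots)$; I would double-check the index bookkeeping here, since the claimed right-hand side $\Bincdf(q-k;m,1-\epsilon)$ must match $I_\epsilon(q, m-q+1)$ and this requires the beta-integral argument to actually have been $\int_0^\epsilon v^{q-1}(1-v)^{m-q}dv$ rather than something shifted by $k$. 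That reconciliation — making sure the numerator's beta integral has exponents $q-1$ and $m-q$ (not $q-k-1$ and $m-q$, etc.) after the $v^{k-1}$ density is folded in — is the step I expect to be the main obstacle; everything else is cancellation and a textbook beta/binomial identity. I would also remark that, unlike the two-sided bounds in Theorem~\ref{thm:feas_bound}, Lemma~\ref{lem:feas_bound} is an \emph{equality} precisely because the regularized essential set has fixed cardinality $k$, so no structural slack between $\underline{\zeta}$ and $\bar{\zeta}$ enters.
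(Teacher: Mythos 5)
Your overall strategy---decompose over which $k$-element subset $\omega'$ of $\omega_q$ equals $\rEs_k(\omega_q)$, integrate against the density $kv^{k-1}$ of $\rV_k(\omega')$ from Lemma~\ref{lem:prior_prob}, and finish with a beta--binomial identity---is essentially the paper's argument, but the displayed formulas contain an error that your final ``index bookkeeping'' step cannot repair. Your verbal description is correct: conditioned on $\rV_k(\omega')=v$, the $m-q$ samples outside $\omega_q$ must fall in the change region (probability $v$ each) and the $q-k$ samples of $\omega_q\setminus\omega'$ must fall outside it (probability $1-v$ each), so the joint integrand is $k v^{k-1}(1-v)^{q-k}v^{m-q} = k(1-v)^{q-k}v^{m-q+k-1}$. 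In your display you swapped the exponents, writing $v^{q-k}(1-v)^{m-q}$, and you then aim to reconcile the result with $\int_0^\epsilon v^{q-1}(1-v)^{m-q}\,\mathrm{d}v$, i.e.\ $I_\epsilon(q,m-q+1)$. That target is wrong on two counts: $I_\epsilon(q,m-q+1)=\Bincdf(m-q;m,1-\epsilon)$, not $\Bincdf(q-k;m,1-\epsilon)$, and your ratio $\int_0^\epsilon v^{q-1}(1-v)^{m-q}\mathrm{d}v \,/\, B(m-q+k,q-k+1)$ is not even a distribution function (it does not equal $1$ at $\epsilon=1$). With the correct exponents the proof closes immediately: the numerator is (common combinatorial factors times) $k\,B(\epsilon;m-q+k,q-k+1)$, the denominator is the same factors times $k\,B(m-q+k,q-k+1)$, and the ratio $I_\epsilon(m-q+k,q-k+1)$ equals $\Bincdf(q-k;m,1-\epsilon)$ by the standard identity---which is exactly how the paper concludes.

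A second, smaller slip: your stated value of $\PP^m\{\omega_q\in\rOmega_{q,k}(\omega_m)\}$ carries the factor $\binom{m-k}{q-k}$ imported from Lemma~\ref{lem:selec_prob}, but that factor belongs to that lemma's event, in which the level-$q$ subset containing $\omega_k$ may be formed from any of the $\binom{m-k}{q-k}$ choices of $q-k$ samples from $\omega_m\setminus\omega_k$; once you fix the subset to be $\omega_q$, no such factor arises, and the correct value is $\binom{q}{k}\,k\,B(m-q+k,q-k+1)=B(m-q+k,q-k+1)/B(k,q-k+1)$ (e.g.\ for $k=1$, $q=2$, $m=3$ this gives $1/3$, whereas your formula gives $2/3$). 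Since you inserted the same spurious factor in the numerator it cancels in the ratio and does not by itself invalidate the conclusion, but it reflects the same misapplication of Lemma~\ref{lem:selec_prob} that produced the exponent swap. For comparison, the paper avoids the decomposition over subsets of $\omega_q$ by conditioning directly on the event $\{\omega_k=\rEs_k(\omega)\cap\omega\in\rOmega_{q,k}(\omega_m)\}$ of Lemma~\ref{lem:selec_prob}, obtaining $B(\epsilon;m-q+k,q-k+1)/B(m-q+k,q-k+1)=\Bincdf(q-k;m,1-\epsilon)$, and then transfers this to conditioning on $\omega_q\in\rOmega_{q,k}(\omega_m)$ via exchangeability together with $\rV_k(\omega_k)=\rV_k(\omega_q)$ when $\omega_k=\rEs_k(\omega_q)$---the same ingredients you invoke, in a slightly different order.
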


The conditional distribution derived in Lemma~\ref{lem:feas_bound} for the regularized violation probability $\rV_k(\omega_q)$ given that $\omega_q\in \rOmega_{q,k}(\omega_m)$ is similar in form to the confidence bounds of Theorem~\ref{thm:feas_bound}. However the condition $\omega_q\in \Omega_{q}(\omega_m)$ employed in Theorem~\ref{thm:feas_bound} is in general much easier to check than membership of $\rOmega_{q,k}(\omega_m)$ because it requires only a function evaluation to check constraint violation rather than recomputation of the essential set of~(\ref{eq:xdef}), which is needed to determine whether $\delta\in\rEs_k(\omega_q \cup \{ \delta \} )$
when $k < \lvert \Es (\omega_q)\rvert$. 
In order to link Lemma~\ref{lem:feas_bound} to Theorem~\ref{thm:feas_bound} we make use of the independence of the regularized violation probability $\rV_k(\omega_q)$ and the cardinality of the essential set, $\lvert\Es(\omega_q) \rvert$. 
A consequence of this independence property is that the arguments used to prove Lemmas~\ref{lem:prior_prob}, \ref{lem:selec_prob} and \ref{lem:feas_bound} can be used to demonstrate that the same set of results hold when all probabilities are conditioned on the event that $\lvert \Es(\omega_q)\rvert$ takes any given value between $\underline{\zeta}$ and $\bar{\zeta}$.
For convenience we summarize the independence properties that are relevant to the proof of Theorem~\ref{thm:feas_bound} as follows.

\begin{lemma}\label{lem:ress_properties}
For any integers $k$ and $q$ such that $\underline{\zeta}\leq k\leq \bar{\zeta}$ and $\bar{\zeta}\leq q \leq m$ we have
\begin{equation}
\label{eq:reg_viol_independence}
{\PP}^m \bigl\{ \rV_k (\omega_{q}) \leq \epsilon 
\ \big| \ 
\lvert \Es(\omega_q)\rvert = k \bigr\}
=
{\PP}^m \bigl\{ \rV_k (\omega_{q}) \leq \epsilon \bigr\} 
\end{equation}
and
\begin{equation}
\label{eq:reg_lq_set_independence}
{\PP}^m \bigl\{ \omega_{q} \in \rOmega_{q,k}(\omega_m)  
\ \big| \ 
\lvert \Es(\omega_q)\rvert = k \bigr\}
=
{\PP}^m \bigl\{ \omega_{q} \in \rOmega_{q,k}(\omega_m)  \bigr\} .
\end{equation}
\end{lemma}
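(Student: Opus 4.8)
The plan is to establish both independence statements from a single structural observation: the regularized essential set $\rEs_k(\omega_q)$ and the regularized violation probability $\rV_k(\omega_q)$ are \emph{functions only of} the first $k$ order statistics (by label) of the essential set together with the labels, whereas the event $\{\lvert\Es(\omega_q)\rvert = k\}$ concerns the \emph{cardinality} of the full essential set. The key is that, conditioned on a realization of the samples $\omega_q$ up to relabelling, the labels $\lambda^{(1)},\dots,\lambda^{(q)}$ are i.i.d.\ uniform on $[0,1]$ and independent of the sample values, so the identity of the $k$ lowest-labelled members of $\Es(\omega_q)$ is a uniformly random $k$-subset of $\Es(\omega_q)$, \emph{regardless of} how large $\lvert\Es(\omega_q)\rvert$ is (provided it is at least $k$). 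I would make this precise by an exchangeability argument.

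First I would set up the conditioning carefully. Fix $k$ with $\underline{\zeta}\le k\le\bar\zeta$ and $q$ with $\bar\zeta\le q\le m$, and condition on the \emph{unordered} multiset of sample values $\{\delta^{(1)},\dots,\delta^{(q)}\}$ --- equivalently, on everything except the labels and the order. Given this, $\Es(\omega_q)$ is a deterministic sub-multiset, say of size $j\in[\underline{\zeta},\bar\zeta]$, and $x^\ast(\omega_q)$ and hence $V(x^\ast(\omega_q))$ are deterministic. The only randomness left is in the labels. Since by construction $k\le\underline{\zeta}\le j$, we are in the first branch of Definition~\ref{def:ress}: $\rEs_k(\omega_q)=\Lambda_k(\Es(\omega_q))$, the $k$ lowest-labelled elements of a fixed $j$-element set. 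Because the labels are i.i.d.\ uniform and independent of the sample values, $\Lambda_k(\Es(\omega_q))$ is a uniformly random $k$-subset of $\Es(\omega_q)$, and its distribution --- as a random $k$-subset of the fixed value-set --- does not depend on $j$ beyond the constraint $j\ge k$.

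Next I would translate this into the two claimed identities. For \eqref{eq:reg_viol_independence}: $\rV_k(\omega_q)=\PP\{\delta\in\rEs_k(\omega_q\cup\{\delta\})\}$; conditioning further on $\lvert\Es(\omega_q)\rvert=k$ (i.e.\ on $j=k$) forces $\rEs_k(\omega_q)=\Es(\omega_q)$, but I need to show the conditional \emph{distribution} of $\rV_k(\omega_q)$ is unchanged. The cleanest route is to observe that, conditioned on the sample values, $\rV_k(\omega_q)$ is a measurable function of the randomly-drawn $k$-subset $\rEs_k(\omega_q)$ of $\Es(\omega_q)$, whose conditional law (given the values) does not involve the event $\{j=k\}$ in any way once $j\ge k$; integrating out over the sample-value distribution then gives $\PP^m\{\rV_k(\omega_q)\le\epsilon\mid j=k\}=\PP^m\{\rV_k(\omega_q)\le\epsilon\mid j=j_0\}$ for every admissible $j_0$, and hence both equal the unconditional probability. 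For \eqref{eq:reg_lq_set_independence} the same argument applies: membership $\omega_q\in\rOmega_{q,k}(\omega_m)$ is, by \eqref{eq:reg_lq_subsets}, the event that every $\delta\in\omega_m\setminus\omega_q$ triggers $\delta\in\rEs_k(\omega_q\cup\{\delta\})$; conditioned on the values of $\omega_m$, this event's probability is determined by the random $k$-subset $\rEs_k(\omega_q)$ drawn uniformly from $\Es(\omega_q)$ together with the independent labels of the outside samples, and this conditional structure is insensitive to $j=\lvert\Es(\omega_q)\rvert$ as long as $j\ge k$. Averaging over sample values removes the conditioning.

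I expect the main obstacle to be making the ``insensitive to $j$'' step fully rigorous rather than merely plausible: one must verify that \emph{every} quantity entering $\rV_k$ and the $\rOmega_{q,k}$-membership event depends on $\Es(\omega_q)$ only through the uniformly random $k$-subset $\rEs_k(\omega_q)$ and not through $\Es(\omega_q)$ itself --- in particular that whether a new sample $\delta$ lies in $\rEs_k(\omega_q\cup\{\delta\})$ is decided by $\rEs_k(\omega_q\cup\{\delta\})$, whose distribution given $\delta$ and the values of $\omega_q$ is that of a uniform $k$-subset of $\Es(\omega_q\cup\{\delta\})$ and thus, by the tower property over whether $\delta$ itself is essential, reduces to a statement independent of $\lvert\Es(\omega_q)\rvert$. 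This is exactly the regularization's purpose --- forcing a fixed cardinality $k$ --- so the argument should go through, but the bookkeeping over the cases $k\le\lvert\Es\rvert$ vs.\ the extension of $\omega_q$ by $\delta$ (which can change $\lvert\Es\rvert$ by at most one under Assumption~\ref{ass:greedy_feas}) needs care.
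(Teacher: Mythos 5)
Your overall strategy---exploiting that the labels $\lambda^{(i)}$ are independent of the sample values, so that the fixed-cardinality set $\rEs_k$ is driven by the label ordering---is in the same spirit as the paper's proof, but as written the proposal has two genuine problems. First, the claim that ``by construction $k\le\underline{\zeta}\le j$'' misreads the hypothesis: the lemma assumes $\underline{\zeta}\le k\le\bar{\zeta}$, so $k\ge\underline{\zeta}$, and nothing prevents $j=\lvert\Es(\omega_q)\rvert<k$ (e.g.\ $\underline{\zeta}=1$, $k=\bar{\zeta}$). You therefore cannot restrict attention to the first branch of Definition~\ref{def:ress}; the padding branch $\Es(\omega)\cup\Lambda_\nu\bigl(\omega\setminus\Es(\omega)\bigr)$ genuinely occurs on the unconditional side and for comparison values $j_0\neq k$, and the description of $\rEs_k(\omega_q)$ as a uniformly random $k$-subset of $\Es(\omega_q)$ is wrong there.

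Second, and more importantly, the pivotal step is missing. After conditioning on the unordered sample values, the conditional laws of $\rV_k(\omega_q)$ and of the event $\omega_q\in\rOmega_{q,k}(\omega_m)$ are functions of those values---through which samples are essential and how much probability mass the region $\{\delta : \delta\in\rEs_k(\omega_q\cup\{\delta\})\}$ carries---not only of the label-selected $k$-subset. Saying the conditional law ``does not involve the event $\{j=k\}$'' therefore does not yield $\PP^m\{\rV_k(\omega_q)\le\epsilon \mid j=k\}=\PP^m\{\rV_k(\omega_q)\le\epsilon \mid j=j_0\}$: these are averages of a value-dependent quantity over different sets of sample values, and their equality is essentially the assertion of the lemma rather than a consequence of the label mechanism being the same. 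You flag this yourself as ``the main obstacle,'' and it is; without it the argument is circular at the decisive point. The paper closes this gap by a different observation: since $\rEs_k$ has fixed cardinality $k$ and the labels are independent of $\lvert\Es(\omega_q)\rvert$, the probability that an appended sample $\delta$ satisfies $\delta\in\rEs_k(\omega_q\cup\{\delta\})$ depends only on $k$ and $q$, and the two events in the lemma are built from exactly such events, hence are independent of the value of $\lvert\Es(\omega_q)\rvert$. Your conditioning-on-values route could be repaired, but it needs an explicit argument of this type (or the computations of Lemmas~\ref{lem:prior_prob}--\ref{lem:feas_bound} redone conditionally on $\lvert\Es(\omega_q)\rvert=k$), not just the independence of labels from sample values.
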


\begin{proof}
The sample labels $\{\lambda^{(1)},\ldots,\lambda^{(m)}\}$ are, by assumption, independent of $\lvert\Es(\omega_q)\rvert$. Therefore the probability of the event $\delta\in\rEs_k(\omega_q\cup\{\delta\})$ for a randomly extracted sample $\delta$ only depends on the values of $k$ and $q$, and does not depend on $\lvert\Es(\omega_q)\rvert$. Hence the events that $\rV_k(\omega_q)\leq \epsilon$ and $\omega_q\in\rOmega_{q,k}(\omega_m)$ are necessarily independent of the event that $\lvert\Es(\omega_q)\vert$ takes any given value. 
%
%
\qed\end{proof}



\noindent\textbf{Proof of Theorem~\ref{thm:feas_bound}}
From the observation that the events $\lvert\Es(\omega_q) \rvert = k$ are mutually exclusive and exhaustive for $k\in\{\underline{\zeta},\ldots,\bar{\zeta}\}$, we have
\begin{multline*}
{\PP}^m\bigl\{ V\bigl(x^\ast(\omega_q)\bigr) \leq \epsilon \ \big| \ \omega_q\in\Omega_q(\omega_m) \bigr\} 
\\
=
\sum_{k=\underline{\zeta}}^{\bar{\zeta}} {\PP}^m \bigl\{  V\bigl(x^\ast(\omega_q)\bigr) \leq \epsilon \ \big| \ \omega_q\in\Omega_q(\omega_m)  \cap \lvert \Es(\omega_q) \rvert = k\bigr\} \\
.\, {\PP}^m\bigl\{ \lvert \Es(\omega_q) \rvert = k \ \big| \ \omega_q\in\Omega_q(\omega_m)\bigr\} .
\end{multline*}
However the definitions of regularized and non-regularized violation probabilities and essential sets imply that $V(x^\ast(\omega_q))=\rV_k(\omega_q)$ and $\Omega_q(\omega_m) = \rOmega_{q,k}(\omega_m)$ if $\lvert \Es(\omega_q)\rvert = k$, and it follows that the event $V(x^\ast(\omega_q)) \leq \epsilon$ conditioned on $\omega_q\in\Omega_q(\omega_m)$ and $\lvert\Es(\omega_q)\rvert = k$ is identical to the event $\rV_k(\omega_q)\leq \epsilon$ conditioned on $\omega_q\in\rOmega_{q,k}(\omega_m)$ and $\lvert\Es(\omega_q)\rvert = k$. Therefore
\begin{multline*}
{\PP}^m\bigl\{ V\bigl(x^\ast(\omega_q)\bigr) \leq \epsilon \ \big| \ \omega_q\in\Omega_q(\omega_m) \bigr\} 
\\
=
\sum_{k=\underline{\zeta}}^{\bar{\zeta}} {\PP}^m \bigl\{  \rV_k(\omega_q) \leq \epsilon \ \big| \ \omega_q\in\rOmega_{q,k}(\omega_m)  \cap \lvert \Es(\omega_q) \rvert = k\bigr\} 
\\
.\, {\PP}^m\bigl\{ \lvert \Es(\omega_q) \rvert = k \ \big| \ \omega_q\in\Omega_q(\omega_m)\bigr\} ,
\end{multline*}
and the bounds in Theorem~\ref{thm:feas_bound} are derived from this expression using Lemmas~\ref{lem:feas_bound} and~\ref{lem:ress_properties}. Specifically, from (\ref{eq:reg_viol_independence}) and (\ref{eq:reg_lq_set_independence}) 
it follows that
\[
{\PP}^m \bigl\{  \rV_k(\omega_q) \leq \epsilon \ \big| \ \omega_q\in\rOmega_{q,k}(\omega_m)  \cap \lvert \Es(\omega_q) \rvert = k\bigr\} 
=
{\PP}^m\bigl\{ \rV_k(\omega_q) \leq \epsilon \ \big| \ \omega_q\in\rOmega_{q,k}(\omega_m) \bigr\} ,
\]
and using~(\ref{eq:reg_lower_bound_ineq}) we obtain
\begin{align*}
&{\PP}^m\bigl\{ V\bigl(x^\ast(\omega_q)\bigr) \leq \epsilon \ \big| \ \omega_q\in\Omega_q(\omega_m) \bigr\} 
\\
&\quad =
\sum_{k=\underline{\zeta}}^{\bar{\zeta}} {\PP}^m \bigl\{  \rV_k(\omega_q) \leq \epsilon \ \big| \ \omega_q\in\rOmega_{q,k}(\omega_m)  \bigr\} 
 {\PP}^m\bigl\{ \lvert \Es(\omega_q) \rvert = k \ \big| \ \omega_q\in\Omega_q(\omega_m)\bigr\} 
\\
&\quad =
 \sum_{k=\underline{\zeta}}^{\bar{\zeta}} \Phi(q-k; m, 1-\epsilon)
  {\PP}^m\bigl\{ \lvert \Es(\omega_q) \rvert = k \ \big| \ \omega_q\in\Omega_q(\omega_m)\bigr\} ,
\end{align*}
and hence the bounds $\Phi(q-\bar{\zeta}; m , 1-\epsilon) \leq \Phi(q-k; m , 1-\epsilon) \leq \Phi(q-\underline{\zeta}; m , 1-\epsilon)$ (which hold for all $\underline{\zeta}\leq k \leq \bar{\zeta}$, $q\leq m$ and $\epsilon\in[0,1]$) imply~(\ref{eq:lower_bound_ineq}) and~(\ref{eq:upper_bound_ineq}) since we must have 
$
\sum_{k=\underline{\zeta}}^{\bar{\zeta}}  {\PP}^m\bigl\{ \lvert \Es(\omega_q) \rvert = k \ \big| \ \omega_q\in\Omega_q(\omega_m)\bigr\} = 1
$.
\qed\vv

\subsection{The probability of selecting a level-$q$ subset and \textit{a posteriori} confidence bounds}\label{sec:priorpost-bounds}

This section provides proofs for Theorems~\ref{thm:post_feas_bound} and~\ref{thm:prior_bound}. 
%
%
These results are used in Section~\ref{sec:algorithm} to determine a randomized constraint selection strategy that is the basis of a method of determining solutions of (\ref{eq:ccp}) with specified a priori and a posteriori confidence bounds.
%
The posterior confidence bounds in Theorem~\ref{thm:post_feas_bound} are derived by an extension of the argument used in the proof of Theorem~\ref{thm:feas_bound}.
We then consider Theorem~\ref{thm:prior_bound}, which provides bounds on the probability that the number of samples $\delta\in\omega_m$ that satisfy $f(x^\ast(\omega_{\q0}),\delta) \leq 0$ is equal to a given value $q$.
The proof of this is based on a subsidiary result given in Lemma~\ref{lem:prior_bound}.
%

\vv
\noindent\textbf{Proof of Theorem~\ref{thm:post_feas_bound}}
The definition of $\theta_{\q0}$ in~(\ref{eq:theta-def}) implies that $\theta_{\q0}(\omega_m) = q$ if and only if, for some $\omega\in \omega_m\setminus \omega_{\q0}$, the set $\omega_{\q0} \cup \omega$ belongs to $\Omega_q(\omega_m)$. But the samples contained in $\omega_m\setminus \omega_{\q0}$ are statistically identical and independent of the samples in $\omega_{\q0}$, and hence any event conditioned on $\theta_{\q0}(\omega_m) = q$ is identical to the same event conditioned on $\omega_{\q0}\cup \{\delta^{(\q0+1)},\ldots,\delta^{(q)}\}=\omega_q\in\Omega_q(\omega_m)$. In particular we have
\[
{\PP}^m\bigl\{ V \bigl( x^\ast(\omega_{\q0})\bigr) \leq \epsilon \ | \ \theta_{\q0}(\omega_m) = q \bigr\} 
= {\PP}^m \bigl\{ V \bigl(x^\ast(\omega_{\q0})\bigr) \leq \epsilon \ | \ \omega_q\in\Omega_q(\omega_m) \bigr\} .
\]
Furthermore, whenever $\theta_{\q0}(\omega_m) = q$ and $\omega_q\in\Omega_q(\omega_m)$, we must have $V(x^\ast(\omega_{\q0})) = V(x^\ast(\omega_q))$, and hence
\[
{\PP}^m\bigl\{ V \bigl( x^\ast(\omega_{\q0})\bigr) \leq \epsilon \ | \ \theta_{\q0}(\omega_m) = q \bigr\} 
= {\PP}^m \bigl\{ V \bigl(x^\ast(\omega_{q})\bigr) \leq \epsilon \ | \ \omega_q\in\Omega_q(\omega_m) \bigr\} .
\]
The bounds~(\ref{eq:post_lower_bound_ineq}) and~(\ref{eq:post_upper_bound_ineq}) then follow from Theorem~\ref{thm:feas_bound}.
\qed\vv

To demonstrate the bounds of Theorem~\ref{thm:prior_bound}
we define $\rtheta_{\q0,k}(\omega_m)$ (analogously to $\theta_{\q0}(\omega_m)$ in~(\ref{eq:theta-def}))
as the number of samples, $\delta\in \omega_m$ with the property that the regularized essential set $\rEs_k(\omega_{\q0} \cup \{\delta\})$ is identical to $\rEs_k(\omega_{\q0})$, or equivalently
\begin{equation}\label{eq:reg_lq_set_def}
\rtheta_{\q0,k}(\omega_m) = r + \bigl\lvert \bigl\{ \delta\in\omega_m\setminus\omega_{\q0} : 
\delta \notin \rEs_k\bigl(\omega_{\q0} \cup \{ \delta \}\bigr) \bigr\} \bigr\rvert .
\end{equation}
%
With this definition we have the following result (see Appendix~\ref{apdx:proof_sec3} for a proof).

\begin{lemma}[Probability of selecting a regularized level-$q$ subset of $\omega_m$]
\label{lem:prior_bound}
For any integers $k$, $\q0$, $q$, $m$ satisfying $0\leq k\leq \q0 \leq q \leq m$ we have
\begin{equation}\label{eq:prior_bound_proof_eq2}
\PP^m \bigl\{ \rtheta_{\q0,k}(\omega_m) = q \bigr\} 
= 
\binom{m-r}{q-r} \frac{B(m-q+k,q - k+1)}{B(k, \q0-k+1)} . 
\end{equation}
\end{lemma}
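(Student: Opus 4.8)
\vv
\noindent\textbf{Proof of Lemma~\ref{lem:prior_bound} (proposal).}
The plan is to condition on the subsample $\omega_{\q0}$ (together with its sample labels), reduce the event $\{\rtheta_{\q0,k}(\omega_m)=q\}$ to a binomial count, and then integrate against the distribution of the regularized violation probability $\rV_k(\omega_{\q0})$. First I would fix $\omega_{\q0}=\{\delta^{(1)},\ldots,\delta^{(\q0)}\}$ with its labels and note that, by~(\ref{eq:reg_lq_set_def}), $\rtheta_{\q0,k}(\omega_m)=q$ if and only if exactly $q-\q0$ of the $m-\q0$ samples $\delta\in\omega_m\setminus\omega_{\q0}$ satisfy $\delta\notin\rEs_k(\omega_{\q0}\cup\{\delta\})$. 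The key point is that, for each such $\delta$, the indicator of $\delta\in\rEs_k(\omega_{\q0}\cup\{\delta\})$ is a measurable function of the pair $(\delta,\lambda_\delta)$ and of $\omega_{\q0}$ and its labels alone, and does not involve the other samples of $\omega_m\setminus\omega_{\q0}$. Since the samples of $\omega_m\setminus\omega_{\q0}$ (with labels) are i.i.d.\ and independent of $\omega_{\q0}$, these indicators are, conditional on $\omega_{\q0}$ and its labels, i.i.d.\ Bernoulli variables, each equal to $1$ with probability $\rV_k(\omega_{\q0})$ by the definition of $\rV_k$. Hence
\[
{\PP}^m\bigl\{ \rtheta_{\q0,k}(\omega_m) = q \ \big|\ \omega_{\q0} \bigr\}
= \binom{m-\q0}{q-\q0}\bigl(1-\rV_k(\omega_{\q0})\bigr)^{q-\q0}\bigl(\rV_k(\omega_{\q0})\bigr)^{m-q},
\]
and taking the expectation over $\omega_{\q0}$ gives
\[
{\PP}^m\bigl\{ \rtheta_{\q0,k}(\omega_m) = q \bigr\}
= \binom{m-\q0}{q-\q0}\,\EE\Bigl[\bigl(1-\rV_k(\omega_{\q0})\bigr)^{q-\q0}\bigl(\rV_k(\omega_{\q0})\bigr)^{m-q}\Bigr].
\]

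The remaining ingredient is the law of $\rV_k(\omega_{\q0})$. I would show that $\rV_k(\omega_{\q0})$ has the $\mathrm{Beta}(k,\q0-k+1)$ distribution, i.e.\ density $v^{k-1}(1-v)^{\q0-k}/B(k,\q0-k+1)$ on $[0,1]$. This is the $\q0\ge k$ extension of Lemma~\ref{lem:prior_prob} (which gives the case $\q0=k$, where $\mathrm{Beta}(k,1)$ has distribution function $v^{k}$), and it is proved by the same argument: because $\lvert\rEs_k(\omega)\rvert=k$ almost surely and the pairs $(\delta^{(i)},\lambda^{(i)})$ are exchangeable, the event that a freshly drawn sample enters $\rEs_k(\omega_{\q0}\cup\{\delta\})$ behaves exactly like the event that a fresh draw falls among the $k$ smallest of $\q0+1$ i.i.d.\ uniform variables, whence the Beta law. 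Substituting this density and using $\int_0^1 v^{a-1}(1-v)^{b-1}\,dv=B(a,b)$ with $a=m-q+k$ and $b=q-k+1$ yields
\[
{\PP}^m\bigl\{ \rtheta_{\q0,k}(\omega_m) = q \bigr\}
= \binom{m-\q0}{q-\q0}\,\frac{B(m-q+k,\,q-k+1)}{B(k,\,\q0-k+1)},
\]
which is~(\ref{eq:prior_bound_proof_eq2}).

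I expect the main obstacle to be the second step, namely pinning down the distribution of $\rV_k(\omega_{\q0})$ when $\q0>k$: this is precisely where Definition~\ref{def:ress} must be used with care, since one has to check that the label-based truncation renders the event that a fresh sample enters $\rEs_k$ independent of how $\lvert\Es(\omega_{\q0})\rvert$ fluctuates between $\underline{\zeta}$ and $\bar{\zeta}$ (in the spirit of Lemma~\ref{lem:ress_properties}), so that the Beta law holds unconditionally. The conditional-independence reduction of the first step and the final Beta-function identity are then routine. An alternative, which would mirror the proof of Lemma~\ref{lem:selec_prob}, is to identify $\{\rtheta_{\q0,k}(\omega_m)=q\}$ with the event that $\omega_{\q0}$ together with its non-entering extra samples is a regularized level-$q$ subset of $\omega_m$; this relies on the consistency property that adjoining to $\omega_{\q0}$ any sample that does not enter $\rEs_k$ leaves the regularized essential set unchanged, which again follows from Definition~\ref{def:ress}.
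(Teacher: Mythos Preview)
Your plan is correct and matches the paper's proof: both condition on $\omega_{\q0}$ to reduce $\{\rtheta_{\q0,k}(\omega_m)=q\}$ to a binomial count with success probability $\rV_k(\omega_{\q0})$, establish that $\rV_k(\omega_{\q0})$ is $\mathrm{Beta}(k,\q0-k+1)$, and integrate to obtain the ratio of beta functions.

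The only substantive difference is in how the $\mathrm{Beta}(k,\q0-k+1)$ law for $\rV_k(\omega_{\q0})$ is obtained. Your order-statistics heuristic (``a fresh draw falls among the $k$ smallest of $\q0+1$ i.i.d.\ uniforms'') captures the right intuition but is not a proof as written: the regularized essential set is not determined by labels alone, so the reduction to uniform order statistics needs justification. The paper instead derives this law from results already in hand: from~(\ref{eq:prior_prob}) with $F_k(v)=v^k$ one gets $\PP^m\{\omega_k=\rEs_k(\omega_{\q0})\cap \rV_k(\omega_k)=v\}=k(1-v)^{\q0-k}v^{k-1}\,\mathrm{d}v$, and Lemma~\ref{lem:selec_prob} applied with $m=q=\q0$ gives $\PP^m\{\omega_k=\rEs_k(\omega_{\q0})\}=kB(k,\q0-k+1)$; dividing and invoking exchangeability of $\omega_k$ within $\omega_{\q0}$ yields the Beta density for $\rV_k(\omega_{\q0})$ directly. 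This is exactly the ``alternative'' you mention in your last paragraph, and it is what the paper does. Your acknowledgement that the Beta law is the crux is on target; I would simply replace the heuristic by this appeal to Lemmas~\ref{lem:prior_prob} and~\ref{lem:selec_prob}.
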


\noindent\textbf{Proof of Theorem~\ref{thm:prior_bound}}
To derive the bounds (\ref{eq:prior_bound})-(\ref{eq:prior_bound_fs}), we note that
if $\lvert \Es(\omega_{\q0})\rvert = k$, then
$\theta_{\q0}(\omega_m) = \rtheta_{\q0,k}(\omega_m)$, and hence
\[
{\PP}^m \bigl\{ \theta_{\q0}(\omega_m) = q \ \big| \ \lvert \Es(\omega_{\q0}) \rvert = k \bigr\}
=
{\PP}^m \bigl\{ \rtheta_{\q0,k}(\omega_m) = q \ \big| \ \lvert \Es(\omega_{\q0}) \rvert = k \bigr\} ,
\]
whereas Lemma~\ref{lem:ress_properties} implies that
the event that $\rtheta_{\q0,k}(\omega_m) = q$ is independent of the event $\lvert \Es(\omega_m)\rvert = k$, so this implies
\[
{\PP}^m \bigl\{ \theta_{\q0}(\omega_m) = q \ \big| \ \lvert \Es(\omega_{\q0}) \rvert = k \bigr\}
=
{\PP}^m \bigl\{ \rtheta_{\q0,k}(\omega_m) = q \bigr\} .
\]
From the law of total probability we therefore have
\begin{align*}
{\PP}^m \bigl\{ \theta_{\q0}(\omega_m) = q \bigr\}
&=
\sum_{k=\underline{\zeta}}^{\bar{\zeta}} {\PP}^m \bigl\{ \theta_{\q0}(\omega_m) = q \ \big| \ \lvert \Es(\omega_{\q0}) \rvert = k \bigr\}
\\
&=
\sum_{k=\underline{\zeta}}^{\bar{\zeta}} {\PP}^m \bigl\{ \rtheta_{\q0,k} (\omega_m) = q \bigr\} {\PP}^m \bigl\{ \lvert \Es(\omega_m)\rvert = k\bigr\}
\end{align*}
since the events $\lvert\Es(\omega_m) \rvert = k$, $k\in\{\underline{\zeta},\ldots,\bar{\zeta}\}$ are mutually exclusive and exhaustive. 
Therefore 
\[
\min_{k\in\{\underline{\zeta},\bar{\zeta}\}} {\PP}^m \bigl\{ \rtheta_{\q0,k}(\omega_m) = q \bigr\} 
\leq
{\PP}^m \bigl\{ \theta_{\q0}(\omega_m) = q \bigr\} 
\leq 
\max_{k\in\{\underline{\zeta},\bar{\zeta}\}} {\PP}^m \bigl\{ \rtheta_{\q0,k}(\omega_m) = q \bigr\} 
\]
and the bounds (\ref{eq:prior_bound})-(\ref{eq:prior_bound_fs}) then follow 
from Lemma~\ref{lem:prior_bound}.
\qed\vv

\subsection{Confidence bounds for optimal cost values}

The confidence bounds of Theorems~\ref{thm:feas_bound} and~\ref{thm:post_feas_bound} provide information on the relationship between the optimal values of the cost functions in the chance-constrained problem~(\ref{eq:ccp}) and the sampled problem~(\ref{eq:xdef}).
This section considers these relationships and derives upper and lower bounds on the probability that the optimal cost of (\ref{eq:ccp}) is less than or equal to that of (\ref{eq:xdef}) for two cases: when $\omega$ in (\ref{eq:xdef}) is such that the solution $x^\ast(\omega)$ satisfies the constraint $f(x^\ast(\omega),\delta)\leq 0$ for a specified number of samples $\delta\in\omega_m$, and when $\omega$ is a randomly selected level-$q$ subset of $\omega_m$.
%

Let $J^o(\epsilon)$ and $J^\ast(\omega)$ denote the optimal objectives of~(\ref{eq:ccp}) and~(\ref{eq:xdef}) for given $\epsilon\in[0,1]$ and $\omega\subseteq\omega_m$, respectively.
The confidence bounds of Theorem~\ref{thm:post_feas_bound} imply the following lower bound (proof of which is provided in Appendix~\ref{apdx:proof_sec3}) on the probability that $J^o(\epsilon)$ is no greater than $J^\ast(\omega_{\q0})$ given that the solution of~(\ref{eq:xdef}) satisfies $f(x^\ast(\omega_{\q0}),\delta)\leq 0$ for exactly $q$ samples $\delta\in\omega_m$.

\begin{corollary}\label{cor:lower_costbnd}
The optimal objective values for problems~(\ref{eq:ccp}) and~(\ref{eq:xdef}) satisfy
\begin{equation}\label{eq:lower_costbnd}
\PP^m \bigl\{  J^\ast(\omega_{\q0}) \geq J^o(\epsilon) \ \big| \ \theta_{\q0}(\omega_m) = q \bigr\} 
\geq \Bincdf(q-\bar{\zeta} ; m, 1-\epsilon) 
\end{equation}
for all $\epsilon\in [0,1]$ and $\bar{\zeta}\leq \q0\leq q\leq m$.
\end{corollary}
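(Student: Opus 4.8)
The plan is to show that the event on which the posterior confidence bound of Theorem~\ref{thm:post_feas_bound} is stated is contained in the event $\{J^\ast(\omega_{\q0}) \geq J^o(\epsilon)\}$, so that~(\ref{eq:lower_costbnd}) follows at once from~(\ref{eq:post_lower_bound_ineq}). The corollary is essentially a deterministic implication dressed up as a probabilistic statement, so the only real content is spotting the right event inclusion.

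First I would record that, by definition of~(\ref{eq:xdef}), $x^\ast(\omega_{\q0}) \in \X$ and $J^\ast(\omega_{\q0}) = c^\top x^\ast(\omega_{\q0})$. Next, on the event $V(x^\ast(\omega_{\q0})) \leq \epsilon$, the point $x^\ast(\omega_{\q0})$ satisfies the chance constraint $\PP\{f(x,\delta) > 0\} \leq \epsilon$ of~(\ref{eq:ccp}) and lies in $\X$, hence it is feasible for~(\ref{eq:ccp}); in particular~(\ref{eq:ccp}) is feasible, and since $\X$ is compact and $c^\top x$ continuous, $J^o(\epsilon)$ is finite and is a lower bound on $c^\top x$ over all points feasible for~(\ref{eq:ccp}). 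Therefore $J^\ast(\omega_{\q0}) = c^\top x^\ast(\omega_{\q0}) \geq J^o(\epsilon)$, which gives the inclusion
\[
\bigl\{ V\bigl(x^\ast(\omega_{\q0})\bigr) \leq \epsilon \bigr\} \subseteq \bigl\{ J^\ast(\omega_{\q0}) \geq J^o(\epsilon) \bigr\} .
\]

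Finally I would intersect both events with $\{\theta_{\q0}(\omega_m) = q\}$ and pass to conditional probabilities; the inclusion is preserved, so
\[
\PP^m \bigl\{ J^\ast(\omega_{\q0}) \geq J^o(\epsilon) \ \big| \ \theta_{\q0}(\omega_m) = q \bigr\}
\geq
\PP^m \bigl\{ V\bigl(x^\ast(\omega_{\q0})\bigr) \leq \epsilon \ \big| \ \theta_{\q0}(\omega_m) = q \bigr\} ,
\]
and applying~(\ref{eq:post_lower_bound_ineq}) of Theorem~\ref{thm:post_feas_bound} (valid since $\bar{\zeta} \leq \q0 \leq q \leq m$) to the right-hand side yields~(\ref{eq:lower_costbnd}). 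I do not anticipate any genuine obstacle; the only point meriting a word of care is the well-definedness of $J^o(\epsilon)$, which is automatic on the event $\{V(x^\ast(\omega_{\q0})) \leq \epsilon\}$ because that event itself exhibits a feasible point of~(\ref{eq:ccp}), and which is in any case covered by the standing feasibility assumption on~(\ref{eq:ccp}).
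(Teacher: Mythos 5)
Your proof is correct and follows essentially the same route as the paper: the key observation that feasibility of $x^\ast(\omega_{\q0})$ for~(\ref{eq:ccp}) (i.e.\ the event $V(x^\ast(\omega_{\q0}))\leq\epsilon$) implies $J^\ast(\omega_{\q0})\geq J^o(\epsilon)$, followed by conditioning on $\theta_{\q0}(\omega_m)=q$ and invoking~(\ref{eq:post_lower_bound_ineq}), is exactly the paper's argument. Your extra remark on the well-definedness of $J^o(\epsilon)$ is a harmless addition.
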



An identical argument applied to the lower bound (\ref{eq:lower_bound_ineq}) of Theorem~\ref{thm:feas_bound} shows that the probability that $J^\ast(\omega)$ is greater than or equal to $ J^o(\epsilon)$, given that $\omega$ is a randomly selected level-$q$ subset of $\omega_m$, must likewise be at least $\Bincdf(q-\smash{\bar{\zeta}} ; m, 1-\epsilon)$.
However the upper bounds in (\ref{eq:upper_bound_ineq}) and (\ref{eq:post_upper_bound_ineq}) do not generally lead to upper bounds on the probability that $J^\ast(\omega)$ is greater than or equal to $J^o(\epsilon)$. This is because although (\ref{eq:upper_bound_ineq}) and (\ref{eq:post_upper_bound_ineq}) imply bounds on the probability that the solution of (\ref{eq:xdef})
is infeasible for (\ref{eq:ccp}), this event does not necessarily imply that $J^\ast(\omega) < J^o(\epsilon)$.
Instead we consider (following~\cite{luedtke08}) the probability that the solution of~(\ref{eq:ccp}) satisfies the constraints in~(\ref{eq:xdef}). 

\begin{theorem}\label{thm:upper_costbnd}
The optimal objective values for problems (\ref{eq:ccp}) 
and (\ref{eq:xdef}) satisfy
\begin{equation}
\PP^m\bigl\{  J^\ast(\omega_{\q0}) > J^o(\epsilon)\bigr\} \leq \Bincdf(\q0-1; r, 1-\epsilon) 
\label{eq:upper_costbnd}
\end{equation}
for all $\epsilon\in [0,1]$ and $\q0\leq m$.
\end{theorem}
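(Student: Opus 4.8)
The plan is to follow the idea attributed to~\cite{luedtke08}: bound the probability that $J^\ast(\omega_{\q0}) > J^o(\epsilon)$ by the probability that the optimal solution $x^o$ of the chance-constrained problem~(\ref{eq:ccp}) fails to satisfy all the sampled constraints defining $x^\ast(\omega_{\q0})$. Concretely, let $x^o$ attain $J^o(\epsilon) = c^\top x^o$, so that $x^o\in\X$ and $V(x^o)\leq\epsilon$. The key observation is that whenever $x^o$ is feasible for~(\ref{eq:xdef}) with $\omega=\omega_{\q0}$ — that is, whenever $f(x^o,\delta^{(i)})\leq 0$ for all $i=1,\ldots,\q0$ — then $x^o$ is a feasible point of the minimization defining $x^\ast(\omega_{\q0})$, and therefore $J^\ast(\omega_{\q0}) = c^\top x^\ast(\omega_{\q0}) \leq c^\top x^o = J^o(\epsilon)$. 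Contrapositively, the event $\{J^\ast(\omega_{\q0}) > J^o(\epsilon)\}$ is contained in the event $\{f(x^o,\delta^{(i)}) > 0 \text{ for some } i\in\{1,\ldots,\q0\}\}$, so it suffices to bound the probability of the latter.

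The second step is to compute (or bound) the probability of that latter event. Since $x^o$ is a \emph{fixed} point (it does not depend on the multisample), the indicator random variables $\mathbf{1}\{f(x^o,\delta^{(i)}) > 0\}$ for $i=1,\ldots,\q0$ are i.i.d.\ Bernoulli with success probability $V(x^o)\leq\epsilon$. Hence the number of violated constraints among $\delta^{(1)},\ldots,\delta^{(\q0)}$ is stochastically dominated by a $\mathrm{Binomial}(\q0, \epsilon)$ variable, and
\[
\PP^m\bigl\{ f(x^o,\delta^{(i)}) > 0 \text{ for some } i\leq \q0 \bigr\}
= 1 - \bigl(1 - V(x^o)\bigr)^{\q0}
\leq 1 - (1-\epsilon)^{\q0}.
\]
Noting that $1 - (1-\epsilon)^{\q0}$ is precisely the probability of one or more successes in $\q0$ Bernoulli$(\epsilon)$ trials, we can rewrite this as $1 - \Bincdf(0;\q0,1-\epsilon)$ in terms of the lower tail of the binomial on the complementary success probability $1-\epsilon$; equivalently, the probability of at most $\q0-1$ "non-violations" out of $r$ trials, which is $\Bincdf(\q0-1; r, 1-\epsilon)$ with the notation of the paper (here $r$ denotes $\q0$, matching the macro $\q0 = r$ used throughout). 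Combining the two steps gives~(\ref{eq:upper_costbnd}).

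A couple of technical points need care. First, one should confirm that~(\ref{eq:ccp}) is solvable so that $x^o$ and $J^o(\epsilon)$ are well defined; this is guaranteed by Assumption~\ref{assump:convexity} (compactness of $\X$, lower semicontinuity of $f(\cdot,\delta)$, so the feasible region of~(\ref{eq:ccp}) is closed and the linear objective attains its minimum) together with feasibility of~(\ref{eq:ccp}), which follows from Assumption~\ref{ass:sp_feas} since $\epsilon$ may be taken as large as needed and the robust feasible set is nonempty. Second, if $x^o$ is not unique one simply fixes any minimizer; the argument is unaffected because the bound only uses $V(x^o)\leq\epsilon$ and the fixedness of $x^o$ relative to the randomness in $\omega_m$. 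The main (and only real) obstacle is the translation between the "probability of at least one violation out of $r$ trials" form, which drops out naturally, and the stated binomial-CDF form $\Bincdf(\q0-1;r,1-\epsilon)$ — once one identifies a "non-violation" as a success with probability $1-\epsilon$ and counts that at most $\q0-1$ of the $r=\q0$ trials are non-violations exactly when at least one is a violation, the two expressions coincide and the proof is complete.
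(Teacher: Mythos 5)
Your proposal is correct and follows essentially the same route as the paper's own proof: both use the optimal point $x^o(\epsilon)$ of~(\ref{eq:ccp}) as a candidate feasible point for~(\ref{eq:xdef}), note that feasibility of $x^o(\epsilon)$ for all $\q0$ i.i.d.\ sampled constraints (an event of probability at least $(1-\epsilon)^{\q0}$) forces $J^\ast(\omega_{\q0}) \leq J^o(\epsilon)$, and then identify $1-(1-\epsilon)^{\q0}$ with $\Bincdf(\q0-1;\q0,1-\epsilon)$. The only cosmetic difference is that you bound $\PP^m\{J^\ast(\omega_{\q0}) > J^o(\epsilon)\}$ directly via the contrapositive inclusion while the paper bounds the complementary event from below, which is the same argument.
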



Although the lower bounds on the probability of $J^\ast(\omega) \geq J^o(\epsilon)$ are identical to the lower confidence bounds of Theorems~\ref{thm:feas_bound} and~\ref{thm:post_feas_bound}, there is no such symmetry between Theorem~\ref{thm:upper_costbnd} and the upper bounds of Theorems~\ref{thm:feas_bound} and~\ref{thm:post_feas_bound}.
Furthermore, for the random discarding strategy described in Section~\ref{sec:algorithm}, the value of $q$ is likely to be larger than $\q0$ (perhaps orders of magnitude larger), and in such cases the lower bounds of Corollary~\ref{cor:lower_costbnd} change more rapidly from $0$ to $1$ as  $\epsilon$ increases
than the upper bound of Theorem~\ref{thm:upper_costbnd}. This is a consequence of the fact that $\omega_{\q0}$ is statistically identical to a randomly selected subset of $\omega_m$, rather than optimal in the sense of (\ref{eq:sp}). Therefore upper confidence bounds on the probability of $J^\ast(\omega) \geq J^o(\epsilon)$, which depend on the degree of optimality of solutions of (\ref{eq:xdef}) with respect to (\ref{eq:ccp}), are generally more conservative than the corresponding lower bounds, which depend on the probability with which solutions of
(\ref{eq:xdef}) satisfy the constraints of (\ref{eq:ccp}).

\subsection{Comparison with confidence bounds for deterministic discarding strategies}\label{sec:comparison}

The literature on robust convex programming provides various results on the probability that a sample-based approximate solution of the chance-constrained optimization problem (\ref{eq:ccp}) satisfies the chance constraints of the original problem. However, 
all of the available results relating to solution methods in which a proportion of the samples are discarded assume that deterministic discarding methods are used. 
This is because existing analyses assume 
either that the sampled problem~(\ref{eq:sp}) is solved exactly using a discarding strategy that is optimal for the given multisample, or that~(\ref{eq:sp}) is solved approximately using constraint selection strategies based on greedy or other heuristics. In this section we provide a comparison of these with the confidence bounds for randomly selected level-$q$ solutions that are given in Theorems~\ref{thm:feas_bound} and~\ref{thm:post_feas_bound} and  in Corollary~\ref{cor:lower_costbnd}, and we discuss the improvement that can be obtained using a randomised method of selecting and discarding samples.

Lower bounds on the probability that the optimal solution of (\ref{eq:sp}) satisfies the constraints of the problem in (\ref{eq:ccp}) are given in~\cite[Thm.~4.1]{calafiore10} and~\cite[Thm.~2.1]{campi11}. These bounds also apply to suboptimal constraint discarding strategies for solving (\ref{eq:sp})
based on greedy or other heuristics.
%
The analyses of~\cite{calafiore10,campi11} employ similar assumptions to those that are used here. Specifically, \cite{campi11} makes assumptions equivalent to Assumptions~\ref{ass:sp_feas} and \ref{ass:greedy_feas} (see \cite[Assump.~2.1 and 2.2]{campi11}), while Assumption \ref{ass:support_dim} is replaced by the observation that the support dimension almost surely satisfies $\bar{\zeta} \leq n$ under these conditions.
%
On the other hand \cite{calafiore10} employs Assumption~\ref{ass:greedy_feas} (see \cite[Procedure~4.1]{calafiore10}) and relaxes Assumption~\ref{ass:sp_feas} to allow for non-zero probability of infeasibility of the sampled problem~(\ref{eq:sp}). 
As a result, although \cite{calafiore10} makes an assumption that is analogous to Assumption~\ref{ass:support_dim}, this has to be modified so that the bound on the support dimension becomes $\bar{\zeta} \leq n+1$ (see \cite[Def.~3.1 and Assump.~2]{calafiore10}). With this modification, the bound in~\cite[Cor.~4.2]{calafiore10} is directly comparable to the lower confidence bounds in Theorems~\ref{thm:feas_bound} and~\ref{thm:post_feas_bound}.

Results that are equivalent to the upper bounds in (\ref{eq:upper_bound_ineq}) and (\ref{eq:post_upper_bound_ineq}) 
are given in~\cite[eq.~(7)]{campi11} and~\cite[Thm.~6.2, eq.~(6.6)]{calafiore10}. 
However the lower confidence bounds in (\ref{eq:lower_bound_ineq}), (\ref{eq:post_lower_bound_ineq}) and (\ref{eq:lower_costbnd})
are new, and the corresponding bounds in \cite{calafiore10,campi11} are not equivalent.
In particular, \cite[Cor.~4.2, eq.~(4.12)]{calafiore10}  and \cite[Thm.~2.1, eq.~(3)]{campi11} give the bound
\begin{equation}\label{eq:ccg_bnd_orig}
\PP^m \bigl\{ V\bigl(x^\ast(\omega^\ast)\bigr) > \epsilon \bigr\} \leq \binom{m - q + \bar{\zeta} - 1}{m - q} \Bincdf(m-q+\bar{\zeta}-1; m, \epsilon),
\end{equation}
where $\omega^\ast$ is the optimal level-$q$ subset of $\omega_m$ in the sampled problem (\ref{eq:sp}). 
This bound is equivalent to 
\begin{equation}\label{eq:ccg_bnd}
\PP^m\bigl\{ V\bigl(x^\ast(\omega^\ast)\bigr) \leq \epsilon \bigr\} \geq \Varcdf(q, \bar{\zeta}; m, \epsilon)
\end{equation}
where $\Varcdf$ is the function defined by
\[
\Varcdf(q, \bar{\zeta}; m, \epsilon) = 1 - \binom{m-q+\bar{\zeta} - 1}{m-q}\Bincdf(m - q + \bar{\zeta} - 1; m, \epsilon) .
\]
Similarly~\cite[Th.~6.2, eq.~(6.6)]{calafiore10} gives a bound that is equivalent to
\begin{equation}\label{eq:calaf_cost_bnd}
\PP^m\bigl\{ J^\ast(\omega^\ast) \geq J^o(\epsilon) \bigr\} \geq \Varcdf(q, \bar{\zeta}; m, \epsilon) .
\end{equation}
%

To compare (\ref{eq:lower_bound_ineq}), (\ref{eq:post_lower_bound_ineq}) and (\ref{eq:lower_costbnd}) with~(\ref{eq:ccg_bnd}) and (\ref{eq:calaf_cost_bnd}), note that, for all $q\leq m$, $\bar{\zeta} \geq 1$ and all $\epsilon\in [0,1]$, we have $\Bincdf(q-\bar{\zeta}; m, 1-\epsilon) \geq \Varcdf (q,\bar{\zeta}; m, \epsilon)$ since
\begin{align*}
1- \Varcdf (q,\bar{\zeta}; m, \epsilon) 
&= \binom{m-q+\bar{\zeta}-1}{m-q}\Bincdf(m-q+\bar{\zeta}-1;m,\epsilon)
\\
&\geq 1 - \Bincdf(q-\bar{\zeta}; m, 1-\epsilon) .
\end{align*}
Furthermore $\Bincdf(q-\bar{\zeta}; m, 1-\epsilon) = \Varcdf (q,\bar{\zeta}; m, \epsilon)$ only if $\underline{\zeta}=\bar{\zeta} =1$ or $q=m$. In both of these cases there can be at most one level-$q$ subset of $\omega_m$; if $\underline{\zeta}=\bar{\zeta} =1$ this follows from the observation that problem~(\ref{eq:xdef}) is then equivalent to a $1$-dimensional problem, whereas for $q=m$ it follows from the convexity of~(\ref{eq:xdef}).
Therefore the confidence bounds~(\ref{eq:ccg_bnd})-(\ref{eq:calaf_cost_bnd}) provided by \cite{calafiore10,campi11} are looser than the bounds (\ref{eq:lower_bound_ineq}), (\ref{eq:post_lower_bound_ineq}) and (\ref{eq:lower_costbnd}) whenever $\bar{\zeta} >1$ and $q < m$.

The discrepancy between the lower confidence bound of Theorem~\ref{thm:feas_bound} (likewise the lower bounds of Theorem~\ref{thm:post_feas_bound} and Corollary~\ref{cor:lower_costbnd})
and the corresponding bounds in~\cite{calafiore10,campi11} arises because the strategy of selecting the subset $\omega^\ast$ of $\omega_m$ that is optimal (in the sense of minimizing the objective value of~(\ref{eq:sp})) could in principle result in the worst-case probability of chance constraint satisfaction.
%
In fact any strategy employing constraint selection criteria that may be correlated with the constraint violation probability (such as, for example, suboptimal sample discarding strategies based on the objective of~(\ref{eq:sp})) are subject to the same worst case confidence bounds.

The confidence bounds of Theorems~\ref{thm:feas_bound} and~\ref{thm:post_feas_bound} are shown in Figure~\ref{fig:conf_bnds} for $m=500$, $q=375$, $\underline{\zeta} = 1$ and $\bar{\zeta} = 10$.
%
An indication of the accuracy with which $x^\ast(\omega)$, for randomly selected $\omega\in\Omega_q(\omega_m)$, approximates the solution of (\ref{eq:ccp}) 
can be obtained from the gap between the bounds $\Bincdf(q-\bar{\zeta}; m, 1-\epsilon)$ and $\Bincdf(q-\underline{\zeta}; m, 1-\epsilon)$ in Fig.~\ref{fig:conf_bnds} (the dashed black line and the solid blue line), and from the sharpness of their transition from $0$ and $1$ as $\epsilon$ increases.
Let $\underline{\epsilon}_{5}$ and $\bar{\epsilon}_{95}$ denote the values of $\epsilon$ corresponding to $5\%$ and $95\%$ confidence levels, i.e.
\[
\Bincdf(q-\underline{\zeta};m, 1-\underline{\epsilon}_{5})=0.05  
\quad \text{and} \quad 
\Bincdf(q-\bar{\zeta}; m, 1-\bar{\epsilon}_{95}) = 0.95 .
\]
Then $\bar{\epsilon}_{95} - \underline{\epsilon}_{5}$ provides a measure of the quality  of the approximate solution $x^\ast(\omega)$ 
since Boole's inequality and (\ref{eq:lower_bound_ineq})-(\ref{eq:upper_bound_ineq}) 
or (\ref{eq:post_lower_bound_ineq})-(\ref{eq:post_upper_bound_ineq}) imply
\[
\PP^m \bigl\{ \underline{\epsilon}_{5} < V(x^\ast(\omega) ) \leq \bar{\epsilon}_{95} \bigr\} \geq 0.9 .
\]
For comparison Fig.~\ref{fig:conf_bnds} shows the function $\Psi(q,\bar{\zeta};m,\epsilon)$ defining the lower bound in~(\ref{eq:ccg_bnd}), which provides a looser bound than $\Phi(q-\bar{\zeta}; m, 1-\epsilon)$.


The improvement in the quality of these confidence bounds with increasing $m$ is shown in Fig.~\ref{fig:p5_95} for the case of $\underline{\zeta} = 1$, $\bar{\zeta} = 10$, and $q = \lceil 0.75 m \rceil$.
For any value of $m\geq 200$, the range, $\bar{\epsilon}^\prime_{95} - \underline{\epsilon}_{5} $, of violation probabilities that fall within the $5\%$ and $95\%$ confidence bounds
of (\ref{eq:ccg_bnd}) and (\ref{eq:upper_bound_ineq}) is more than twice as great as the range, $\bar{\epsilon}_{95} - \underline{\epsilon}_{5}$, falling within the bounds of (\ref{eq:lower_bound_ineq}) and (\ref{eq:upper_bound_ineq}), and, as $m$ increases, the ratio $(\bar{\epsilon}^\prime_{95} - \underline{\epsilon}_{5} ) /(\bar{\epsilon}_{95} - \underline{\epsilon}_{5} )$ increases.

%

\begin{figure}
\centerline{\includegraphics[scale=0.48, trim = 28 10 42 25, clip=true]{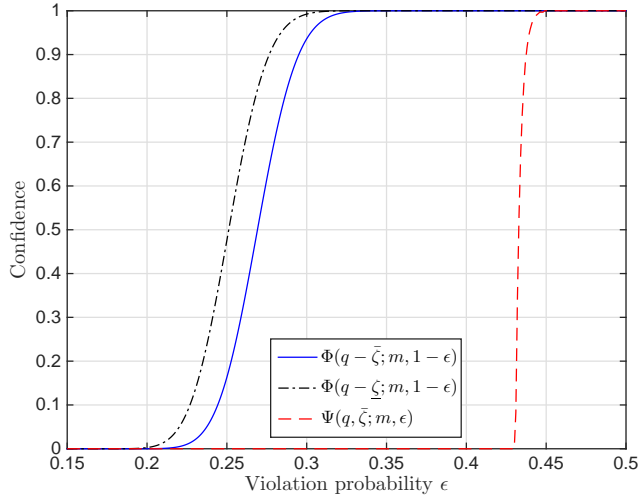}}
\caption{The confidence bounds $\Bincdf(q-\bar{\zeta};m,1-\epsilon)$ and $\Bincdf(q-\underline{\zeta}; m,1-\epsilon)$ of Theorems~\ref{thm:feas_bound} and~\ref{thm:post_feas_bound} for $m=500$ with $q=\lceil 0.75 m \rceil = 375$ and $\underline{\zeta} = 1$, $\bar{\zeta}=10$. The function $\Varcdf(q, \bar{\zeta}; m, \epsilon)$ in (\ref{eq:ccg_bnd}) is also shown for comparison.
\label{fig:conf_bnds}}
\end{figure} 

\begin{figure}
\centerline{\includegraphics[scale=0.34, trim = 0 5 35 25, clip=true]{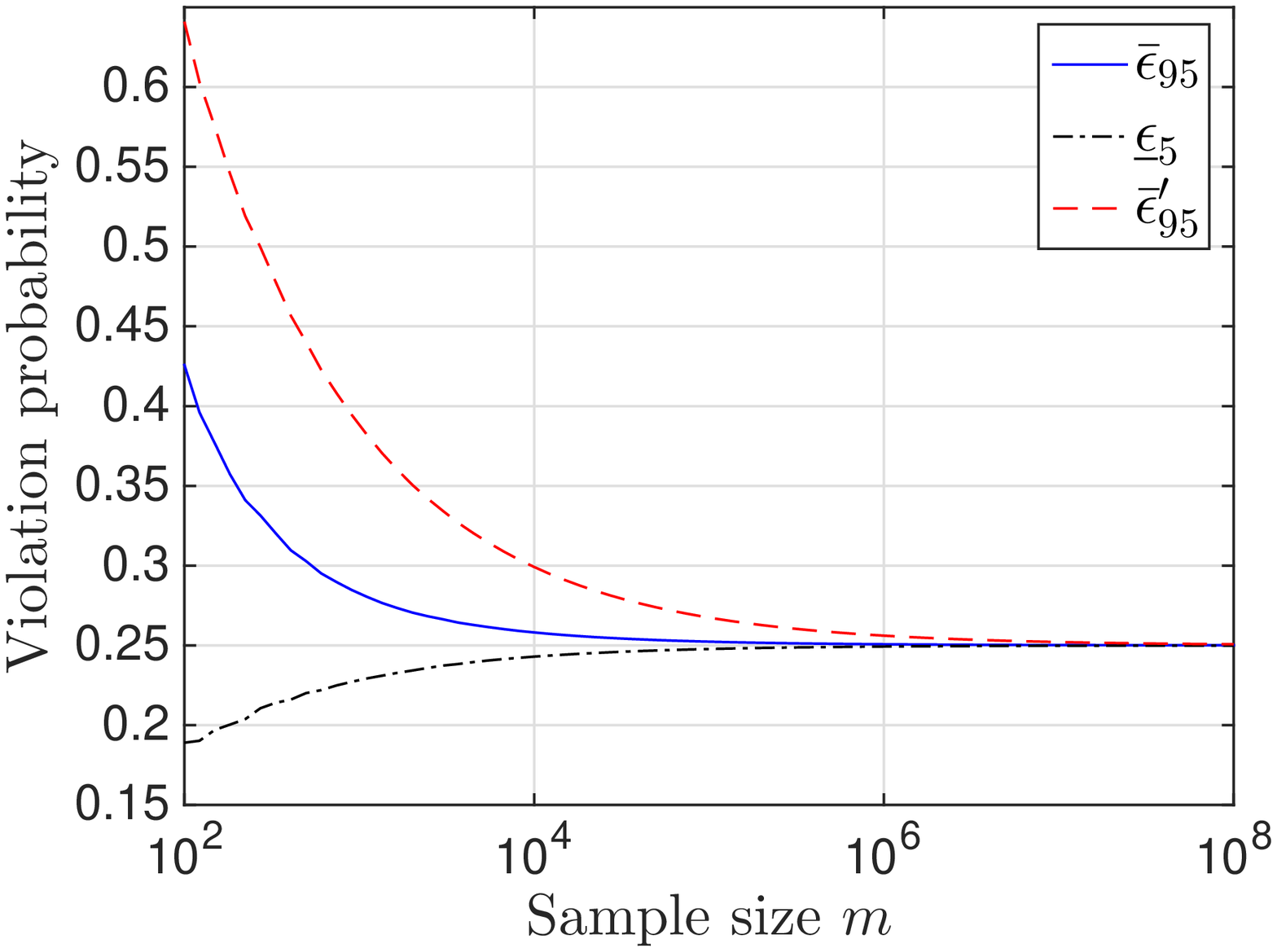}%
\includegraphics[scale=0.34, trim = 30 5 35 20, clip=true]{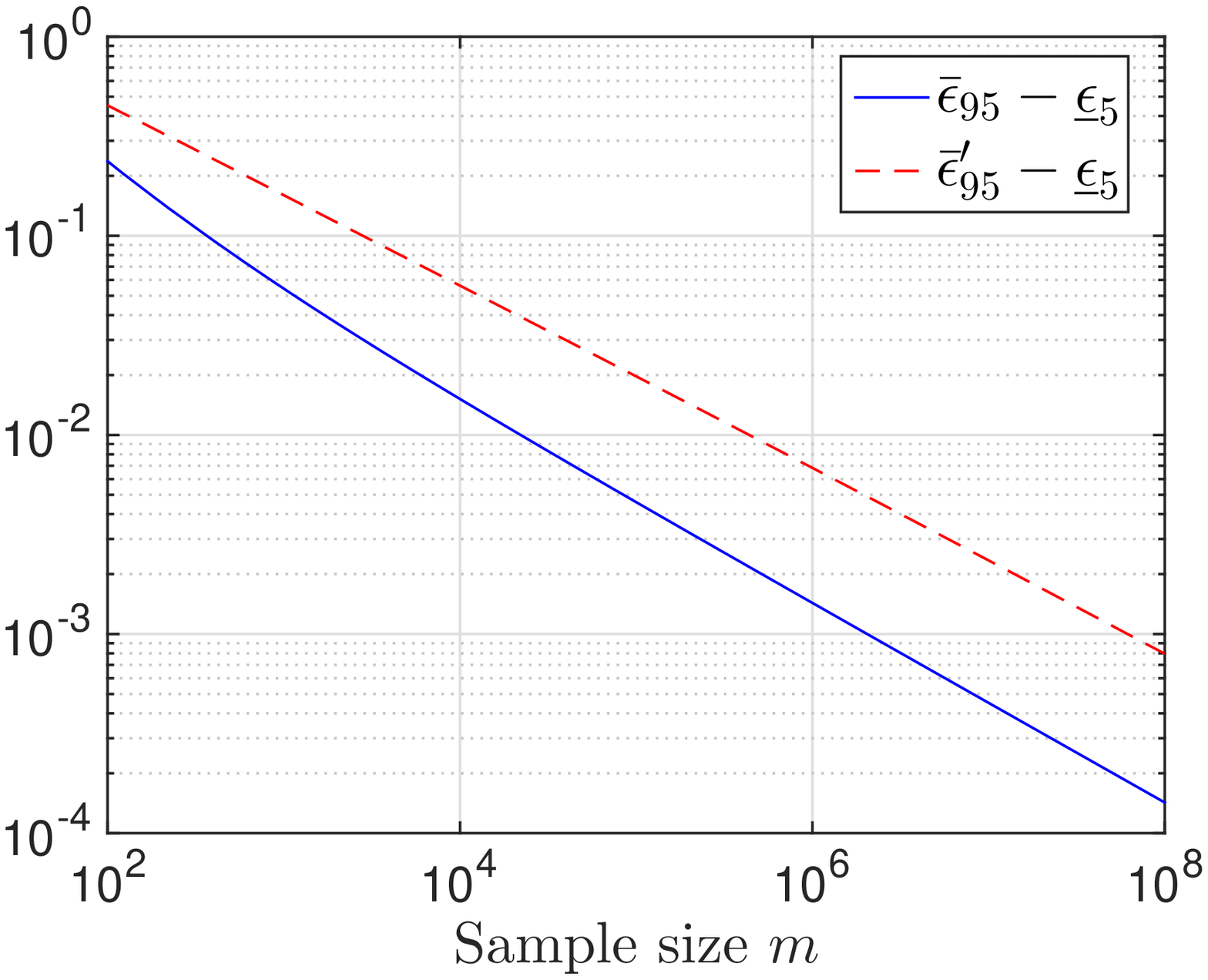}}
\caption{Values of $\epsilon$ lying on $5\%$ and $95\%$ confidence bounds 
for varying sample size $m$ with $q=\lceil 0.75 m \rceil$ and $\underline{\zeta} = 1$, $\bar{\zeta}=10$. 
The probabilities $\underline{\epsilon}_{5}$, $\bar{\epsilon}_{95}$ and $\bar{\epsilon}_{95}^\prime$ 
are defined by
$\Bincdf(q-\underline{\zeta}; m, 1-\underline{\epsilon}_{5}) = 0.05$, $\Bincdf(q-\bar{\zeta}; m, 1-\bar{\epsilon}_{95}) = 0.95$, and $\Varcdf(q,\bar{\zeta}; m,\bar{\epsilon}_{95}^\prime) = 0.95$.
\label{fig:p5_95}}
\end{figure}

\section{Random convex optimization with \textit{a priori} and \textit{a posteriori} confidence bounds}\label{sec:algorithm}


This section describes a procedure based in the results of Section~\ref{sec:results} for determining a solution of the chance-constrained problem~(\ref{eq:ccp}) with specified prior and posterior confidence bounds on violation probability.
Prior confidence bounds are imposed by requiring that the solution, $\xsol$, generated by this procedure has a violation probability $V(\xsol)$ that satisfies 
\[
  V(\xsol) \in (\underline{\epsilon},\bar{\epsilon} ]
  \text{ with probability } \pprior
\]
for given a priori bounds $\underline{\epsilon}$, $\bar{\epsilon}$ and probability $\pprior$.
The posterior bounds have the form of bounds on the probability that $V(\xsol)$ lies in an interval given knowledge of how many sampled constraints are satisfied by $\xsol$.

For any sample selection method capable of determining a randomly chosen level-$q$ subset $\omega$ of a multisample $\omega_m$, Theorem~\ref{thm:feas_bound} provides a posteriori confidence bounds on the probability that $V(\xsol)\leq \epsilon$, where $\xsol=x^\ast(\omega)$, based on the value, $\qsol$, of $q$.
%
%
%
The procedure described here ensures that $\xsol$ is statistically equivalent to a randomly selected level-$q$ solution  of~(\ref{eq:xdef}) by making use of Theorem~\ref{thm:post_feas_bound}. This results in bounds of the form (\ref{eq:post_lower_bound_ineq})-(\ref{eq:post_upper_bound_ineq}),
%
%
which are tight in the sense that they cannot be improved without information about the statistical distribution of the support dimension of~(\ref{eq:xdef}) in the interval $[\underline{\zeta},\bar{\zeta}]$. Moreover, for problems in which $\smash{\underline{\zeta} = \bar{\zeta}}$ the posterior distribution of the violation probability $V(\xsol)$ is known exactly given knowledge of $\qsol$.
Using Theorem~\ref{thm:prior_bound}, a bound can be determined on the prior probability that $\qsol$ lies in a given interval; this allows bounds on the prior distribution of $V(\xsol)$ to be computed, and provides the basis for a guarantee that $V(\xsol)$ satisfies the required a priori confidence bounds.

To exploit the confidence bounds of Theorem~\ref{thm:post_feas_bound}, the proposed method involves extracting a multisample $\omega_m$ from the distribution of $\delta$, then
solving (\ref{eq:xdef}) for $x^\ast(\omega_{r^\ast})$, for a predetermined number, $r^\ast$, of samples in $\omega_m$, before determining the number of samples $\delta\in\omega_m$ for which the constraint $f(x(\omega_{r^\ast}),\delta) \leq 0$ is satisfied.
These steps are repeated $\Ntrial$ times,
where $\Ntrial$ is chosen so as to meet the required a priori confidence bound on the solution $\xsol$. 
This procedure is similar to the approaches proposed in~\cite{chamanbaz16} and~\cite{calafiore17} for solving chance-constrained problems such as (\ref{eq:ccp}), both of which repeatedly solve a sampled convex program in the form of~(\ref{eq:xdef}), and determine, for each solution, an empirical estimate of the  associated violation probability 
in order to improve on the confidence bounds of~\cite{calafiore10,campi11}. However the procedure proposed here differs in that it uses tight bounds on the distribution of violation probability that are provided by Theorems~\ref{thm:feas_bound} and~\ref{thm:post_feas_bound} to determine prior and posterior confidence bounds.


The details of the method are given in Algorithm~\ref{alg1}. The motivation behind the design of parameters in step (i) is to ensure, with a given prior probability,
that the computation in step (ii) determines at least one level-$q$ subset of $\omega_m$ for $q\in[\underline{q},\bar{q}]$.
%
%
Therefore $r^\ast$ is chosen to maximize $\ptrial$, which, by Theorem~\ref{thm:prior_bound}, is a (tight) lower bound on the a priori probability that $\underline{q}\leq\qsol\leq\bar{q}$.
Step (ii) requires~(\ref{eq:xdef}) to be solved for $\Ntrial$ multisamples $\omega_m$, and the number of violated constraints determined in each case
(note that these $\Ntrial$ solutions and constraint violation counts may be performed in parallel).
%
The method of selecting $\xsol$ in step~(iii) ensures that $\underline{q}\leq\qsol\leq\bar{q}$ whenever the number of sampled constraints satisfied by one of the $\Ntrial$ solutions of~(\ref{eq:xdef}) computed in step~(ii) lies in the interval $[\underline{q},\bar{q}]$. 
If the solution of the minimization in step~(iii) is non-unique, the index $i^\ast$ can be randomly chosen from the minimizing set.

The value of $m$ can be chosen so that the posterior distribution of the violation probability $V(\xsol)$ has, with probability $\ppost$, a maximum uncertainty of $\Delta\epsilon$, for given $\ppost$ and $\Delta\epsilon$. This entails choosing $m$ so that the a posteriori confidence bound
\begin{equation}\label{eq:posterior_viol_bound0}
  \bigl\lvert V(\xsol) - (1-\qsol/m) \bigr\rvert \leq \Delta\epsilon
  \ \text{ with probability } \ppost
\end{equation}
holds whenever $\underline{q}\leq\qsol\leq\bar{q}$.
Using Theorem~\ref{thm:post_feas_bound}, it can be shown that a sufficient condition is that $\epsilon_a,\epsilon_b$ exist satisfying $\epsilon_a - \epsilon_b \leq \Delta \epsilon$  and 
\begin{equation}\label{eq:posterior_viol_bound_suff}
  \begin{aligned}
    \Bincdf\bigl(m(1-\bar{\epsilon}) - \bar{\zeta}; m , 1 - \epsilon_a \bigr) &\geq \tfrac{1}{2}(1+\ppost) \\
    \Bincdf\bigl(m(1-\bar{\epsilon}) - \underline{\zeta}; m , 1 - \epsilon_b \bigr) &\leq \tfrac{1}{2}(1-\ppost)  .
  \end{aligned}
\end{equation}

%


\begin{algorithm}[t]
  \caption{Procedure for solving~(\ref{eq:ccp})}
\label{alg1}
Given violation probability bounds $\underline{\epsilon}$, $\bar{\epsilon}$, probabilities $\pprior < \ppost$ and the sample size, $m$:
\begin{enumerate}[(i).]
\item
Determine $\underline{q}$, $\bar{q}$, $\qstar$ and $\ptrial$, $\Ntrial$, where
\begin{align*}
&\begin{alignedat}{3}
    \underline{q}  &= \min\ q \ &\text{ subject to } \ &\Bincdf(q -\bar{\zeta}; m , 1-\bar{\epsilon} ) \geq \tfrac{1}{2}(1+\ppost) \\
    \bar{q}  &= \max \ q  \ &\text{ subject to } \ &\Bincdf(q -\underline{\zeta}; m , 1-\underline{\epsilon} ) \leq \tfrac{1}{2}(1- \ppost) 
  \end{alignedat}\\
&  \qstar = \arg\max_{\q0} \sum_{q=\underline{q}}^{\bar{q}}
             \binom{m - \q0}{q - \q0}
             \min_{\zeta\in[\underline{\zeta},\bar{\zeta}]}
             \frac{ B(m-q+\zeta, q-\zeta + 1) }{ B(\zeta, \q0 - \zeta + 1) }\\
&  \ptrial = \sum_{q=\underline{q}}^{\bar{q}} \binom{m - \qstar}{q - \qstar}
  \min_{\zeta\in[\underline{\zeta},\bar{\zeta}]} \frac{ B(m-q+\zeta, q-\zeta + 1) }{ B(\zeta, \qstar - \zeta + 1) }
  \\
& \Ntrial = \biggl\lceil \frac{\ln (1 - \pprior/\ppost)} {\ln (1- \ptrial)} \biggr\rceil .
\end{align*}
\item
  Draw $\Ntrial$ multisamples, $\omega_m^{(i)}$, $i=1,\ldots,\Ntrial$, and for each $i$:
  \begin{enumerate}[(a)]
  \item
    solve (\ref{eq:xdef}) to determine $x^\ast(\omega_{\qstar}^{(i)})$,
  \item
    evaluate
$
\theta_{\qstar}(\omega_m^{(i)}) = \bigl\lvert \bigl\{ \delta\in\omega_m^{(i)} : f\bigl(x^\ast(\omega_{\qstar}^{(i)}), \delta \bigr) \leq 0 \bigr\} \bigr\rvert$.
\end{enumerate} 
\item
  Determine $\istar\in\{1,\ldots,\Ntrial\}$ such that $\theta_{\qstar}(\omega_m^{(\istar)})$ is closest to $\tfrac{1}{2}(\bar{q}+\underline{q})$, i.e.
  \[
    \istar = \arg\min_{i\in\{1,\ldots,\Ntrial\}} \bigl\lvert \tfrac{1}{2}(\bar{q}+\underline{q}) - \theta_{\qstar}(\omega_m^{(i)}) \bigr\rvert ,
  \]
and return the solution estimate, $\xsol=x^\ast(\omega_{\qstar}^{(\istar)})$,
 and $\qsol = \theta_{r^\ast}(\omega_m^{(\istar)})$.
\end{enumerate} 
\end{algorithm}


The following theorem provides confidence bounds on the solutions generated by Algorithm~\ref{alg1} (see Appendix~\ref{apdx:proof_sec3} for a proof).

\begin{theorem}\label{thm:alg_prior_post_bnds}
  Algorithm~\ref{alg1} generates $\xsol$ and $\qsol$ satisfying: (i) the a priori confidence bound
  \begin{equation}\label{eq:prior_viol_bound}
    \PP^{\Ntrial\, m} \bigl\{ V(\xsol) \in (\underline{\epsilon},\bar{\epsilon}\,] \bigr\} \geq \pprior ,
  \end{equation}
  (ii) the a posteriori bound~(\ref{eq:posterior_viol_bound0}) if $\qsol\in[\underline{q},\overline{q}]$; and (iii) the a posteriori bounds
  \begin{equation}\label{eq:posterior_viol_bound}
    \Bincdf(q-\bar{\zeta}; m, 1-\epsilon)
    \leq
    \PP^{\Ntrial\, m} \bigl\{ V(\xsol) \leq \epsilon \ | \ \qsol = q
    \bigr\}
    \leq
    \Bincdf(q-\underline{\zeta}; m, 1-\epsilon) 
  \end{equation}
  for all values of $q$.
\end{theorem}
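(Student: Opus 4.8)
The plan is to prove part~(iii) first and then to obtain parts~(i) and~(ii) from it together with the parameter choices in step~(i) of Algorithm~\ref{alg1}. For~(iii), note that the $\Ntrial$ multisamples $\omega_m^{(i)}$ drawn in step~(ii) are independent and identically distributed, and that the index $\istar$ returned in step~(iii) is a measurable function of the count vector $\bigl(\theta_{\qstar}(\omega_m^{(1)}),\ldots,\theta_{\qstar}(\omega_m^{(\Ntrial)})\bigr)$ alone (with ties, if any, resolved by auxiliary randomness that is independent of everything else). Conditioning on this vector and on the tie-breaking randomness makes $\istar$ deterministic; since the trials are mutually independent, the conditional law of $V\bigl(x^\ast(\omega_{\qstar}^{(\istar)})\bigr)$ given $\theta_{\qstar}(\omega_m^{(\istar)})=q$ then coincides with the conditional law of $V\bigl(x^\ast(\omega_{\qstar})\bigr)$ given $\theta_{\qstar}(\omega_m)=q$ for a single trial. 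By Theorem~\ref{thm:post_feas_bound} the latter lies between $\Bincdf(q-\bar{\zeta};m,1-\epsilon)$ and $\Bincdf(q-\underline{\zeta};m,1-\epsilon)$ — the hypotheses $\bar{\zeta}\leq\qstar\leq q\leq m$ hold because $\theta_{\qstar}(\omega_m)\geq\qstar\geq\bar{\zeta}$ always, and for $q<\bar{\zeta}$ the event $\qsol=q$ has probability zero so there is nothing to show. Averaging over the realizations of the count vector and the tie-break that are consistent with $\qsol=q$ preserves these bounds and yields~(\ref{eq:posterior_viol_bound}).

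Part~(i) follows from~(\ref{eq:posterior_viol_bound}) together with the definitions of $\underline q$, $\bar q$, $\qstar$, $\ptrial$ and $\Ntrial$. Using monotonicity of $\Bincdf(\,\cdot\,;m,p)$ in its first argument and the defining inequalities of $\underline q$ and $\bar q$, we get for every $q\in[\underline q,\bar q]$ that $\PP^{\Ntrial m}\{V(\xsol)\leq\bar\epsilon\mid\qsol=q\}\geq\Bincdf(q-\bar{\zeta};m,1-\bar\epsilon)\geq\tfrac{1}{2}(1+\ppost)$ and $\PP^{\Ntrial m}\{V(\xsol)\leq\underline\epsilon\mid\qsol=q\}\leq\Bincdf(q-\underline{\zeta};m,1-\underline\epsilon)\leq\tfrac{1}{2}(1-\ppost)$; subtracting and then averaging over $q\in[\underline q,\bar q]$ gives $\PP^{\Ntrial m}\{V(\xsol)\in(\underline\epsilon,\bar\epsilon\,]\mid\qsol\in[\underline q,\bar q]\}\geq\ppost$. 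Next, the minimization in step~(iii) returns $\qsol\in[\underline q,\bar q]$ whenever at least one trial $i$ has $\theta_{\qstar}(\omega_m^{(i)})\in[\underline q,\bar q]$, since $\tfrac{1}{2}(\bar q+\underline q)$ lies in this interval, so the value of $\theta_{\qstar}$ closest to the midpoint is within distance $\tfrac{1}{2}(\bar q-\underline q)$ of it and is therefore in $[\underline q,\bar q]$. Because $\theta_{\qstar}(\omega_m)\geq\qstar$, Theorem~\ref{thm:prior_bound} gives $\PP^m\{\theta_{\qstar}(\omega_m)\in[\underline q,\bar q]\}=\sum_{q=\underline q}^{\bar q}\PP^m\{\theta_{\qstar}(\omega_m)=q\}\geq\ptrial$; by independence of the trials, $\PP^{\Ntrial m}\{\qsol\in[\underline q,\bar q]\}\geq 1-(1-\ptrial)^{\Ntrial}$, and the choice $\Ntrial=\lceil\ln(1-\pprior/\ppost)/\ln(1-\ptrial)\rceil$ yields $(1-\ptrial)^{\Ntrial}\leq 1-\pprior/\ppost$, hence $\PP^{\Ntrial m}\{\qsol\in[\underline q,\bar q]\}\geq\pprior/\ppost$. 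Combining the two bounds,
\[
\PP^{\Ntrial m}\bigl\{V(\xsol)\in(\underline\epsilon,\bar\epsilon\,]\bigr\}\;\geq\;\PP^{\Ntrial m}\bigl\{\qsol\in[\underline q,\bar q]\bigr\}\,\ppost\;\geq\;(\pprior/\ppost)\,\ppost\;=\;\pprior ,
\]
which is~(\ref{eq:prior_viol_bound}).

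Part~(ii) is obtained from~(\ref{eq:posterior_viol_bound}) in the same manner: for $\qsol=q\in[\underline q,\bar q]$ one evaluates the sandwich at $\epsilon=1-q/m\pm\Delta\epsilon$ and invokes the sufficient condition~(\ref{eq:posterior_viol_bound_suff}), choosing $\epsilon_a,\epsilon_b$ with $\epsilon_a-\epsilon_b\leq\Delta\epsilon$, and uses monotonicity of $\Bincdf$ together with the definitions of $\underline q$ and $\bar q$ to conclude $\PP^{\Ntrial m}\{\lvert V(\xsol)-(1-q/m)\rvert\leq\Delta\epsilon\mid\qsol=q\}\geq\ppost$, which is~(\ref{eq:posterior_viol_bound0}).

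The step I expect to be the main obstacle is the conditioning argument in part~(iii): one must show carefully that conditioning on the realized value of $\qsol$ — an event jointly determined by all $\Ntrial$ trials through the minimization in step~(iii) — does not distort the single-trial conditional distribution supplied by Theorem~\ref{thm:post_feas_bound}. This rests on $\istar$ being a function of the counts $\theta_{\qstar}(\omega_m^{(i)})$ only, so that it carries no information about the violation probabilities beyond the conditioned value $\theta_{\qstar}(\omega_m^{(\istar)})=\qsol$, together with the mutual independence of the trials; ties must be broken by an independent randomization so that tie-breaking is also uninformative.
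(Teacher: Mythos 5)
Your proposal is correct and follows essentially the same route as the paper: apply Theorem~\ref{thm:post_feas_bound} trial-by-trial to get (iii) and (ii), then combine the definitions of $\underline{q}$, $\bar{q}$ with Theorem~\ref{thm:prior_bound}, the independence of the $\Ntrial$ trials and the choice of $\Ntrial$ to get (i). The only difference is cosmetic: you spell out the conditioning argument justifying the transfer from a single trial to the selected index $\istar$ (which the paper asserts directly), and you use exact subtraction of the two conditional probabilities where the paper invokes Boole's inequality, yielding the same bound $\ppost$.
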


The a posteriori bound (\ref{eq:posterior_viol_bound}) is tight given that the only assumption on the support dimension, $\zeta$, of (\ref{eq:xdef}) is that $\smash{\zeta\in[\underline{\zeta},\bar{\zeta}]}$. Furthermore, if $\smash{\underline{\zeta} = \bar{\zeta}}$, then (\ref{eq:posterior_viol_bound}) gives the exact posterior distribution of the violation probability conditioned on $\qsol = q$.
We note that the lower bound in~(\ref{eq:posterior_viol_bound}) coincides with the bound of~\cite[eq.~(15)]{calafiore17} for the case of fully supported problems, i.e.~if $\smash{\underline{\zeta} = \bar{\zeta}}=n$. However, Theorem~\ref{thm:alg_prior_post_bnds} shows in addition that this bound is exact in the sense that it is equal to the posterior probability of $V(\xsol)\leq \epsilon$ given that $\qsol = q$. Furthermore, Theorem~\ref{thm:alg_prior_post_bnds} provides tight posterior bounds for general $\underline{\zeta} \leq\bar{\zeta} \leq n$.

The value of $\ptrial$ is a tight bound on the probability that
$\theta_{\qstar}(\omega_m^{(i)})\in[\underline{q},\bar{q}]$, 
and it is exact if $\underline{\zeta} = \bar{\zeta}$. 
On the other hand the a priori bound (\ref{eq:prior_viol_bound}) is potentially conservative due to the use of the inequality~(\ref{eq:booles_ineq}) in the proof of Theorem~\ref{thm:alg_prior_post_bnds}. We note however that the degree of conservativeness of this a priori bound decreases as $m$ increases (so that $\Phi(q-\zeta;m,1-\epsilon)$ more closely approximates $\mathbf{1}\bigl(\epsilon - (q-\zeta)/m\bigr)$, where $\mathbf{1}(\epsilon)$ is the unit step function), and as $\bar{\epsilon}-\underline{\epsilon}$ decreases (so that $\bar{q}-\underline{q}$ decreases).

Although $\qstar$ is specified as the solution of a combinatorial optimization problem in Algorithm~\ref{alg1}, this problem can be solved effectively by exhaustive search since $\q0$ is a scalar integer variable.
However, choosing $\qstar$ so that $\ptrial$ is maximized may result in excessively large values of $\qstar$ if $\bar{\epsilon}$ is close to $0$.
In this case it may therefore be preferable to limit $\qstar$ to some maximum value at the expense of increasing the number, $\Ntrial$, of trials needed satisfy the prior probability bound (\ref{eq:prior_viol_bound}).
Table~\ref{tab:rstar_Ntrial} illustrates the variation of $\qstar$ and $\Ntrial$ with $\smash{\underline{\zeta}}$ and $\smash{\bar{\zeta}}$ for representative values of $m$, $\smash{\underline{\epsilon}}$, $\smash{\bar{\epsilon}}$, $\pprior$, $\Delta\epsilon$ and $\ppost$. Here it can be seen that, as $\smash{\underline{\zeta}}$ and $\smash{\bar{\zeta}}$ increase while the gap $\smash{\bar{\zeta}-\underline{\zeta}}$ is kept constant, the values of $\qstar$ and $\Ntrial$ respectively increase and decrease slowly. On the other hand, $\Ntrial$ grows rapidly with increasing $\bar{\zeta}$ if $\underline{\zeta}$ remains constant; this case corresponds to increasing structural uncertainty in the solution of~(\ref{eq:xdef}).


\begin{table}[t]
\centerline{\begin{tabular}{|l@{~}|@{~}c@{~}c@{~}c@{~}c@{~}|c@{~}c@{~}c@{~}c@{~}|c@{~}c@{~}c@{~}c@{~}|c@{~}c@{~}c@{~}c@{~}|c@{~}c@{~}c@{~}c@{~}|}\hline
$(\underline{\zeta},\bar{\zeta})$\rule{0pt}{11pt}&
\multicolumn{4}{c}{$(2,5)$} & 
\multicolumn{4}{|c}{$(7,10)$} & 
\multicolumn{4}{|c}{$(17,20)$} & 
\multicolumn{4}{|c}{$(47,50)$} &
\multicolumn{4}{|c|}{$(97,100)$}
\\
$\pprior$ &
$p_1$ & $p_2$ & $p_3$ & $p_4$
&
$p_1$ & $p_2$ & $p_3$ & $p_4$
&
$p_1$ & $p_2$ & $p_3$ & $p_4$
&
$p_1$ & $p_2$ & $p_3$ & $p_4$
&
$p_1$ & $p_2$ & $p_3$ & $p_4$
\\\hline
$\qstar$\rule{0pt}{11pt} &
\multicolumn{4}{c}{$15$} &
\multicolumn{4}{|c}{$40$} &
\multicolumn{4}{|c}{$91$} &
\multicolumn{4}{|c}{$241$} &
\multicolumn{4}{|c|}{$492$}
\\
$\Ntrial$ &
$84$ & $109$ & $176$ & $291$
&
$37$ & $48$ & $77$ & $128$
&
$22$ & $29$ & $46$ & $76$
&
$13$ & $16$ & $26$ & $43$
&
$8$ & $11$ & $17$ & $29$
\\ \hline
\end{tabular}}
\vspace{4mm}
\centerline{\begin{tabular}{|l@{~}|@{~}c@{~~}c@{~~}c@{~~}c@{~~}|c@{~~}c@{~~}c@{~~}c@{~~}|c@{~~}c@{~~}c@{~~}c@{~}|}\hline
$(\underline{\zeta},\bar{\zeta})$\rule{0pt}{11pt} &
\multicolumn{4}{c}{$(1,2)$} & 
\multicolumn{4}{|c}{$(1,5)$} & 
\multicolumn{4}{|c|}{$(1,10)$}
\\
$\pprior$ &
$p_1$ & $p_2$ & $p_3$ & $p_4$
&
$p_1$ & $p_2$ & $p_3$ & $p_4$
&
$p_1$ & $p_2$ & $p_3$ & $p_4$
\\\hline
$\qstar$\rule{0pt}{11pt} &
\multicolumn{4}{c}{$5$} &
\multicolumn{4}{|c}{$12$} &
\multicolumn{4}{|c|}{$22$} 
\\
$\Ntrial$ &
$96$ & $125$ & $200$ & $331$
&
$189$ & $246$ & $396$ & $655$
&
$1022$ & $1329$ & $2116$ & $3465$
\\ \hline
\end{tabular}}
\caption{Variation of $\qstar$ and $\Ntrial$ with $\underline{\zeta}$ and $\bar{\zeta}$ for $m=10^5$, $\underline{\epsilon} = 0.19$, $\bar{\epsilon} = 0.21$,
  $\Delta\epsilon = 0.005$, and $\ppost = 0.5(1+\pprior)$, and where $p_1=0.9$, $p_2=0.95$, $p_3=0.99$, $p_4 = 0.999$.\label{tab:rstar_Ntrial}
  \vspace{-2\baselineskip}
}
\end{table}

\subsection{Example: Smallest bounding hypersphere with \textit{a priori} and \textit{a posteriori} confidence bounds}\label{sec:min_sphere}


Algorithm~\ref{alg1} is applied in this section to 
the problem of determining the smallest hypersphere in $\RR^4$ that has a given probability of containing a normally distributed parameter $\delta\sim\mathcal{N}(0,I)$, with specified prior and posterior confidence bounds. Therefore we are interested in minimizing the positive scalar $R$ subject to $\PP\{ \| c - \delta \|_2 > R\} \leq \epsilon$, where $R$ and $c\in\RR^4$ are optimization variables and $\epsilon\in[0,1]$ is a given probability. 

The prior confidence bounds on the solution $\xsol$ are such that the violation probability, $V(\xsol)$ must lie between $\underline{\epsilon} = 0.19$ and $\bar{\epsilon} = 0.21$ with a probability of at least $\pprior = 0.9$. We also impose bounds on the posterior distribution by requiring that, with probability $\ppost = 0.95$, $V(\xsol)$ should differ from $1-\qsol/m$ by no more than $\pm 0.005$ whenever the solution satisfies $\qsol\in[\underline{q},\bar{q}]$.
For this problem the support dimension $\zeta$ lies almost surely between $2$ and $5$.

To check whether the posterior confidence bounds are satisfied with a multisample size of $m=10^5$ we first determine the respectively minimum and maximum values of $\epsilon_a$ and $\epsilon_b$ in (\ref{eq:posterior_viol_bound_suff}):
\[
\epsilon_a = 0.2125, \quad \epsilon_b = 0.2075.
\]
Hence (\ref{eq:posterior_viol_bound0}) holds with $\Delta \epsilon = \epsilon_a-\epsilon_b = 0.005$, which implies that the a posteriori confidence specification is met with $m=10^5$. This can be seen in Figure~\ref{fig:minball_z2_5_m1e5_post}, which shows the upper and lower posterior confidence bounds of Theorem~\ref{thm:post_feas_bound} for the extreme cases of $q = \bar{q}$ and $q = \underline{q}$.

The values of $\qstar$ and $\Ntrial$ that are needed to meet the a priori confidence specification can be determined from Table~\ref{tab:rstar_Ntrial}. Thus for $\pprior = 0.9$ and $(\underline{\zeta},\bar{\zeta}) = (2,5)$ we obtain $\qstar = 15$ and $\Ntrial=84$ (which corresponds to $\ptrial = 0.0347$). The prior confidence bound requires that the solution provided by Algorithm~\ref{alg1} should satisfy $\qsol \in [\underline{q} , \bar{q} ]$ with a probability not less than $\pprior/\ppost = 0.974$,
and, using the lower bound of Theorem~\ref{thm:prior_bound} (see Fig.~\ref{fig:minball_z2_5_m1e5_q_emp}), it can be verified that this condition is indeed met.
Figure~\ref{fig:minball_z2_5_m1e5_q_emp} shows moreover that the lower bound (\ref{eq:prior_bound}) is conservative, since the empirical distribution of $\qsol$ closely follows the upper bound given by (\ref{eq:prior_bound_fs}) in this example.
With this information the value of $\Ntrial$ can be reduced to $39$ (which corresponds to $\qstar=25$ samples and $\ptrial = 0.0736$) without violating the a priori confidence bound.

The a posteriori lower bound given in~\cite[eq.~14]{calafiore17} on the probability that $V(\xsol) \leq \epsilon$, conditioned on the value of $\qsol$, is shown in Figure~\ref{fig:minball_z2_5_m1e5_post} for the cases of $\qsol = \underline{q}$ and $\qsol = \bar{q}$ with $m=10^5$ and $\qstar = 15$. Comparing with the posterior bounds of Theorem~\ref{thm:post_feas_bound}, this lower bound can be seen (in Fig.~\ref{fig:minball_z2_5_m1e5_post}) to underestimate by $10\%$ the posterior probability of $\lvert V (\xsol) - (1-\qsol)/m \rvert \leq \Delta \epsilon$ for $\Delta \epsilon = 0.005$ (rising to $75\%$ for $\Delta\epsilon = 0.0035$).
%
Furthermore, the upper and lower bounds provided by Theorem~\ref{thm:post_feas_bound} are almost indistinguishable at this scale, implying that the posterior distribution of violation probability can be determined very precisely for this example.

\begin{figure}[ht]
  \centerline{\begin{overpic}[scale=0.48, trim = 28 10 35 0, clip=true]{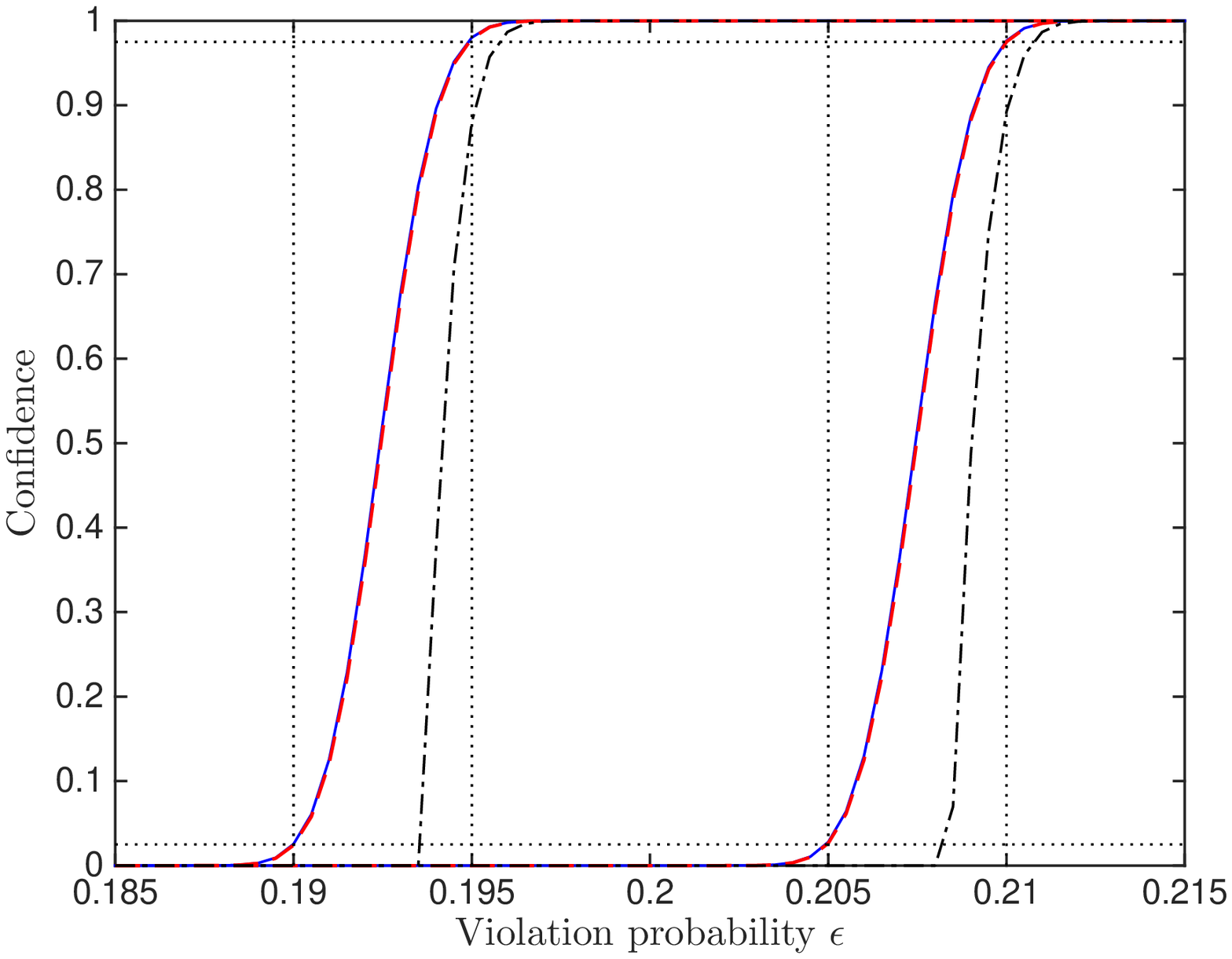}
      \put(20,78){$\epsilon = \underline{\epsilon}$}
      \put(78,78){$\epsilon = \bar{\epsilon}$}
      \put(23.5,69.15){\vector(1,0){14.7}}
      \put(30,69.15){\vector(-1,0){6.5}}
      \put(29.3,70.5){$\Delta\epsilon$}
      \put(68,69.15){\vector(1,0){13.9}}
      \put(73.7,69.15){\vector(-1,0){6.5}}
      \put(73,70.5){$\Delta\epsilon$}
      \put(28.5,18){\small{(a)}}
      \put(72.3,18){\small{(b)}}
      \put(97,9){$\tfrac{1}{2}(1-\ppost)$}
      \put(97,73.5){$\tfrac{1}{2}(1+\ppost)$}
    \end{overpic}}
  \caption{Posterior distributions of violation probability for: (a) $\hat{q} =\bar{q}$ and (b) $\hat{q} = \underline{q}$. For each case the upper and lower bounds on $\smash{\PP^{\Ntrial\,m}\bigl\{V(\xsol) \leq \epsilon \ | \ \qsol =q\bigr\}}$ in (\ref{eq:posterior_viol_bound}) are shown in blue solid lines and red dashed lines, respectively. The lower bounds of \cite[eq.~14]{calafiore17} are shown in black dash-dotted lines for $\hat{q} =\bar{q}$ and $\hat{q} = \underline{q}$.
    \label{fig:minball_z2_5_m1e5_post}}
\end{figure}

\begin{figure}[ht]
  \centerline{\begin{overpic}[scale=0.48, trim = 28 10 35 0, clip=true]{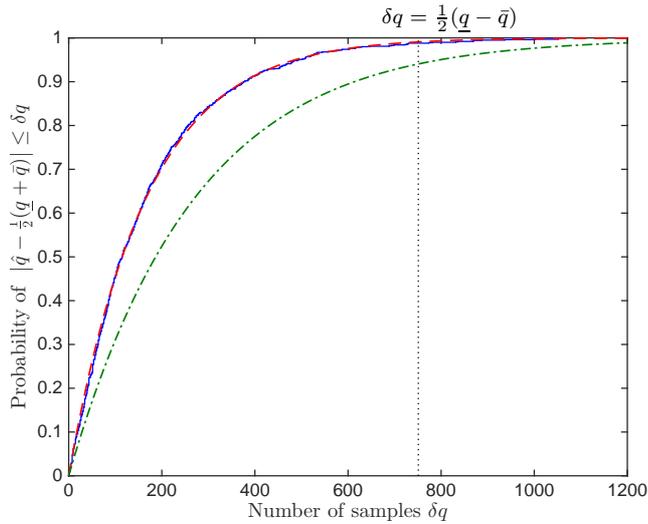}
      \put(58,78.5){$\delta q = \tfrac{1}{2}(\underline{q} -\bar{q})$}
    \end{overpic}}
  \caption{The probability that the number of unviolated constraints, $\qsol$, in Algorithm~\ref{alg1} satisfies $\bigl\lvert \hat{q} - \tfrac{1}{2}(\underline{q} + \bar{q}) \bigr\rvert \leq \delta q$. Blue solid line: empirical distribution function computed using $1000$ solutions generated by Algorithm~\ref{alg1}. The red dashed and green dash-dotted lines are respectively the upper and lower bounds given by
    Theorem~\ref{thm:prior_bound}.\label{fig:minball_z2_5_m1e5_q_emp}}
\end{figure}

\subsection{Example: Finite horizon optimal  control}\label{sec:calafex1}

The optimal control problem considered in~\cite[Sect.~IIIA]{calafiore17} is concerned with designing control sequences to drive the state of a dynamical system to a target over a finite horizon with high probability despite model uncertainty. This section considers a generalized version of the problem, which minimizes a weighted sum of probabilistic bounds on the error between the system state and the target state. The resulting controller is able to shape the probability distribution of future states more precisely than can be achieved by minimizing only a robust outer bound on the deviation from the target state. The resulting generalization therefore has potential applications to Multi-period Trading~\cite{boyd16} and Stochastic Model Predictive Control~\cite{cannon09,kouvaritakis15}.

The system has an uncertain linear discrete-time model 
\[
  z(t+1) = A(\delta) z(t)  + B u(t)
\]
with state $z\in\RR^{n_z}$, control input $u\in\RR^{n_u}$, and unknown parameter $\delta\in\Delta\subseteq\RR^{n_z\times n_z}$. The dependence of the matrix $A$ on $\delta$ is given by
\[
  A(\delta) = A_0 + \sum_{i,j=1}^{n_z} \delta_{ij}\, e_i e_j^\top
\]
where $\delta_{ij}$ is uniformly distributed on the interval $[-\rho,\rho]$ for all $i,j$,  and $e_i$ denotes the $i$th column of the $n_z\times n_z$ identity matrix.
For a horizon of $N$ steps, the control problem is to minimize the deviation of the predicted $N$-step ahead state from a target state $z_{\textit{ref}}$.
This deviation is stochastic because of model uncertainty, and a finite horizon optimal control problem is formulated to control its probability distribution.
For given $A(\delta)$ and a postulated control sequence
$u(0), \ldots, u(N-1)$,
the $N$-step ahead state, $z(u_N,\delta)$, is
\[
  z(u_N,\delta) = A^N(\delta) z(0) + \begin{bmatrix} A^{N-1}(\delta) \, B & \cdots & A(\delta)\, B & B \end{bmatrix}
  u_N
\]
where $u ^\top_N = [u^\top(0) \ \cdots \ u^\top(N-1)]$,
and we define a chance-constrained optimal control problem with decision variables
$x = \bigl(u_N,R_1,\ldots, R_\nu\bigr)\in\RR^{Nn_u+\nu}$ as:
\begin{equation}\label{eq:calafex1_ccp}
  \minimize_{x} \ \lambda\|u_N\|^2 + \sum_{j=1}^{\nu} \sigma_j R_j^2
  \ \ 
  \text{subject to}
  \ 
  \begin{aligned}[t]
    & \PP\bigl\{ \|z_{\textit{ref}} - z(u_N,\delta) \|^2 > R_j^2 \bigr\} \leq \epsilon_j  \\
    & j= 1,\ldots,\nu .
  \end{aligned}
\end{equation}
Here $\lambda$ and $\sigma_1,\ldots,\sigma_{\nu}$ are positive scalar weights and $\epsilon_1,\ldots,\epsilon_{\nu}$ are given maximum violation probabilities.  This problem formulation minimizes the weighted sum of $\nu$ probabilistic bounds on the deviation of the $N$-steps ahead state from the target state and the $l_2$-norm of the input sequence over $N$ steps.
Assuming that multisamples $\omega_{m,j} = \{ \delta_j^{(1)},\ldots,\delta_j^{(m)}\}\in\Delta^m$ for $j= 1,\ldots, \nu$ can be drawn repeatedly from the distribution for $\delta$,
we define a sampled optimization problem as:
\begin{equation}\label{eq:calafex1_sp}
  \minimize_{x} \lambda \|u_N\|^2 + \sum_{j=1}^{\nu} \sigma_j R_j^2
  \ \ 
  \text{subject to}
  \ 
  \begin{aligned}[t]
    & f_j(x,\delta) \leq 0  \text{ for all } \delta \in\omega_{r_j^\ast,j} \\
    & j = 1,\ldots,\nu ,
  \end{aligned}
\end{equation}
where $f_j$ is defined for $j=1,\ldots,\nu$ by
\[
  f_j(x,\delta) \defeq \|z_{\textit{ref}} - z(u_N,\delta) \|^2 - R_j^2 .
\]

We next describe briefly how Algorithm~\ref{alg1} can be generalized for the case of multiple chance constraints in order to determine, with a pre-specified probability $\pprior$, a solution $\hat{x}$ that satisfies the a priori condition
\begin{equation}\label{eq:multi_prior_bound}
  \PP^{\Ntrial\, m \nu} \bigl\{ V_j (\hat{x}) \in (\underline{\epsilon}_j,\bar{\epsilon}_j]
  , \ j = 1,\ldots,\nu \bigr\} \geq \pprior ,
\end{equation}
by solving (\ref{eq:calafex1_sp}) $\Ntrial$ times,
where $V_j(x)$ are violation probabilities defined for $j=1,\ldots,\nu$ by
\[
  V_j(x) \defeq \PP\bigl\{ f_j(x,\delta) > 0 \bigr\}.
\]
Specifically, by applying step (i) of Algorithm~\ref{alg1} for each $j\in\{1,\ldots,\nu\}$, the parameters $r_j^\ast$ defining each of the $\nu$ sampled constraints in~(\ref{eq:calafex1_sp}) and parameters  $\smash{\underline{q}_j}$, $\bar{q}_j$, and $p_{\textit{trial},j}$ can be computed for each $j\in\{1,\ldots,\nu\}$ given the values of $\pprior$, $p_{\textit{post},j}$, $\underline{\epsilon}_j$ and $\bar{\epsilon}_j$. Then applying step (ii) of Algorithm~\ref{alg1} we compute for each $i\in\{1,\ldots,\Ntrial\}$ the solution, $x^\ast(\omega_{r_1^\ast,1}^{(i)},\ldots, \omega_{r_\nu^\ast,\nu}^{(i)})$, of problem (\ref{eq:calafex1_sp}) and count for each $j\in\{1,\ldots,\nu\}$ the corresponding number of unviolated sampled constraints:
\[
  \theta_{r_1^\ast,\ldots,r_{\nu}^\ast,j}(\omega_{m,j}^{(i)}) \defeq
  \bigl\lvert \bigl\{ \delta \in \omega_{m,j}^{(i)} : f_j\bigl( x^\ast(\omega_{r_1^\ast,1}^{(i)},\ldots, \omega_{r_\nu^\ast,\nu}^{(i)}),\delta \bigr) \leq 0 \bigr\} \bigr\rvert .
\]
Finally, step (iii) of Algorithm~\ref{alg1} can be implemented by computing
\[
  i^\ast = \argmin_{i\in\{1,\ldots,\Ntrial\}} \max_{j\in\{1,\ldots,\nu\}} \bigl\lvert \tfrac{1}{2} (\bar{q}_j + \underline{q}_j) - \theta_{r_1^\ast,\ldots,r_{\nu}^\ast}(
  \omega_{m,j}^{(i)})\big\rvert
\]
and setting $\hat{x}\defeq x^\ast(\omega_{r_1^\ast,1}^{(i^\ast)},\ldots, \omega_{r_\nu^\ast,\nu}^{(i^\ast)})$ and $\hat{q}_j \defeq \theta_{r_1^\ast,\ldots,r_{\nu}^\ast,j}(\omega_{m,j}^{(i^\ast)})$ for $j=1,\ldots,\nu$.
Then, by using the approach of~\cite{schildbach13} to generalize Theorem~\ref{thm:alg_prior_post_bnds} for the case of multiple chance constraints, it is straightforward to show that choosing
\[
  \Ntrial = \biggl\lceil \frac{\ln (1 - \pprior/\prod_{j=1}^{\nu} p_{\textit{post},j})} {\ln (1- \prod_{j=1}^{\nu}p_{\textit{trial},j})} \biggr\rceil 
\]
ensures the prior confidence bound (\ref{eq:multi_prior_bound})
and the a posteriori bounds
\begin{equation}\label{eq:multi_posterior_bound}
  \Bincdf(q-\bar{\zeta}_j; m, 1-\epsilon)
  \leq
  \PP^{\Ntrial\, m \nu} \bigl\{ V_j(\xsol) \leq \epsilon \ | \ \qsol_j = q
  \bigr\}
  \leq
  \Bincdf(q-\underline{\zeta}_j; m, 1-\epsilon) 
\end{equation}
for $j=1,\ldots,\nu$. Here $\bar{\zeta}_j$ is the (maximum) support dimension of the constraint $f_j(x,\delta)\leq 0$ (see \cite[Def.~3.3(b)]{schildbach13}) and $\underline{\zeta}_j$ is the corresponding minimum number of sampled constraints $f_j(x,\delta)\leq 0$ that can be active at any solution of~(\ref{eq:calafex1_sp}) with non-zero probability.

We consider three versions of problem~(\ref{eq:calafex1_ccp}) with: (a) a single chance constraint with $\epsilon_1=0.005$, (b) a single chance constraint with $\epsilon_1=0.2$, and (c) two chance constraints with $\epsilon_1=0.005$ and $\epsilon_2=0.2$. 
The model has $n_z=6$ states, $n_u=1$ control variables, and $A_0$, $B$ are as given in~\cite{calafiore17}. A horizon of $N=10$ steps is employed. Note that in principle a robust bound could be imposed on $\|z_{\textit{ref}} - z(u_N,\delta)\|$ using standard robust convex programming techniques. However this would require $10^{36}$ second-order conic constraints, thus making the optimization intractable in practice.

The parameters defining the sample approximation~(\ref{eq:calafex1_sp}) in each case (a)-(c) are given in Table~\ref{tab:calafex1}.
The posterior constraint violation probabilities are required to satisfy
\[
  \bigl\lvert V_j(\xsol) - (1-\qsol_j/m) \bigr\rvert \leq \Delta\epsilon_j
  \text{ with probability } p_{\textit{post},j}
\]
whenever $\qsol_j\in[\underline{q}_j,\bar{q}_j]$, 
with tolerances of $\Delta\epsilon_1 \leq 0.005$ in (a) and (c), and $\Delta\epsilon_j \leq 0.01$ 
for $j=1$ in (b) and $j=2$ in (c). For the values of $p_{\textit{post},j}$ in Table~\ref{tab:calafex1}, a multisample size of $m=65000$ meets these requirements.
In each case the prior probability bounds (\ref{eq:multi_prior_bound}) 
are imposed with $\pprior = 0.9$.

\def\strut{\makebox[0pt][c]{\raisebox{-4pt}{\rule{0pt}{16pt}}}}
\def\strutb{\makebox[0pt][c]{\raisebox{-4pt}{\rule{0pt}{8pt}}}}
\begin{table}[ht]  \centerline{\begin{tabular}{|l@{~~}c@{~}|@{~}c@{~~}c@{~~}c@{~~}c@{~}|@{~}c@{~~}c@{~~}c@{~~}c@{~}|}\hline 
              case & $j$ & $p_{\textit{post},j}$ & $(\underline{\epsilon}_j,\bar{\epsilon}_j)$ & $(\underline{\zeta}_j,\bar{\zeta}_j)$ & $\Delta\epsilon_j$ & $\qstar_j$ & $(\underline{q}_j, \bar{q}_j)$ & $\Ntrial$ & $\ptrial$\strut\\\hline \hline
              (a) & $1$ & $1-10^{-9}$ & $(0,0.005)$ & $(1,3)$& $0.0037$ & $1000$ & $(64786,65000)$ & \ $5$ & $0.381$\strut\\\hline
              (b) & $1$ & $0.995$ & $(0.18,0.22)$ & $(1,3)$& $0.0093$ & $8$ & $(50999, 53025)$ & \ $44$ & $0.053$\strut\\\hline 
              (c) \ $\biggl\{\!\!$&\begin{tabular}{@{}c@{}} $1$\strut\\$2$\strutb\end{tabular} & \begin{tabular}{@{}c@{}} $1-10^{-9}$\strut\\$0.995$\strutb\end{tabular} & \begin{tabular}{@{}c@{}} $(0,0.005)$\strut\\$(0.18,0.22)$\strutb\end{tabular} & \begin{tabular}{@{}c@{}} $(1,3)$\strut\\$(1,3)$\strutb\end{tabular} & \begin{tabular}{@{}c@{}} $0.0037$\strut\\$0.0093$\strutb\end{tabular} & \begin{tabular}{@{}c@{}} $1000$\strut\\$8$\strutb\end{tabular} & \begin{tabular}{@{}c@{}} $(64786,65000)$\strut\\$(50999,53025)$\strutb\end{tabular} & \makebox[0pt][r]{$\biggr\}$}\ $117$ & $0.020$\strut\\ \hline
                               \end{tabular}}
          \caption{Finite horizon optimal control problem parameters for $m=65000$ and \mbox{$\pprior = 0.9$}.\label{tab:calafex1}}
  \vspace{-2\baselineskip}
\end{table}

The maximization in step (i) of Algorithm~\ref{alg1} results in a very large value of $\qstar_1$ (namely $\qstar_1 = 64786$) in (a) and (c) because $\bar{\epsilon}_1$ is close to $0$ and $p_{\textit{post},1}$ is close to~$1$. For cases (a) and (c) we therefore restrict $\qstar_1$ to a maximum of $1000$. The resulting reduction in the number of constraints in~(\ref{eq:calafex1_sp}) causes $\Ntrial$ to be only slightly greater than its minimum value (e.g.~$\Ntrial$ is increased from $1$ to $5$ in case (a)). The parameters in case (a) are similar to, and allow direct comparison with, the parameters used in~\cite[Sect.~IIIA]{calafiore17}. In particular, for any $i$, the probability that the solution of (\ref{eq:calafex1_sp}) satisfies $\theta_{r^\ast_1,1}(\omega^{(i)}) \geq \underline{q}_1$ is 
$\ptrial = 0.381$, implying that (\ref{eq:calafex1_sp}) needs to solved on average $2.6$ times before a solution meeting this condition is found. On the other hand, the bounds used in~\cite{calafiore17} give the bound on the expected number of times that (\ref{eq:calafex1_sp}) should be solved in order to obtain a solution satisfying the same proportion ($99.67\%$) of sampled constraints as $9.6$. Thus a three-fold reduction is achieved even though more samples ($2000$) are present in the counterpart to the optimization (\ref{eq:calafex1_sp}) in~\cite[Sect.~IIIA]{calafiore17}; this is a result of the improved bounds of Theorem~\ref{thm:post_feas_bound}.

The optimal control sequences for problems (a)-(c) with weights $\lambda=0.005$, $\sigma_1=1$ and $\sigma_2=10$ are shown in Figure~\ref{fig:calafex1_u}, and the associated empirical distributions of the deviations of $N$-step ahead states from the target are shown in Figures~\ref{fig:calafex1_pdf} and~\ref{fig:calafex1_cdf}. From Fig.~\ref{fig:calafex1_cdf} it can be verified that these solutions satisfy $V_j(\xsol) \in (\underline{\epsilon}_j,\bar{\epsilon}_j)$ for each $j$. The chance constraint imposed at $\epsilon_1 = 0.2$ in case (b) has the effect of reducing the mode of the $N$-step ahead state deviation relative to case (a), but this is at the expense of a heavier tail (Fig.~\ref{fig:calafex1_pdf}). The constraints at $\epsilon_1 = 0.2$ and $\epsilon_2 = 0.005$ in case (c) cause a reduction in the modal deviation with only a small increase,  relative to (a), in the $99.5\%$ bound on the deviation 
 of the $N$-step ahead state from the target.

\begin{figure}
\centerline{\includegraphics[scale=0.48, trim = 23 10 38 25, clip=true]{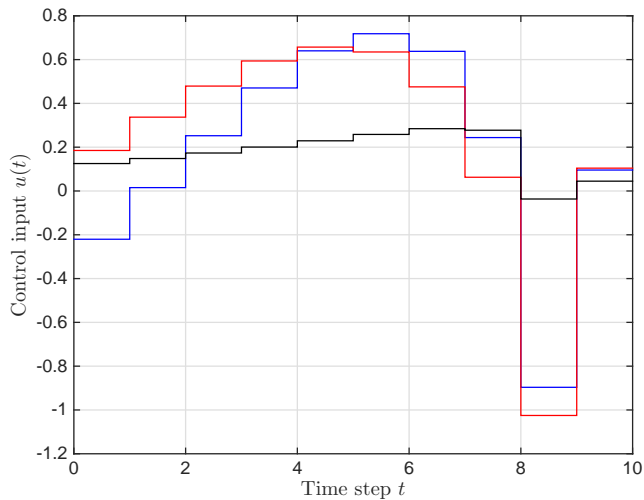}}
\caption{Optimal control sequences for the sample approximations of three instances of~(\ref{eq:calafex1_ccp}): 
  (a) single chance constraint with $\epsilon_1=0.005$ (black);
  (b) single chance constraint with $\epsilon_1=0.2$ (red);
  (c) two chance constraints with $(\epsilon_1,\epsilon_2)=(0.005,0.2)$ (blue).\label{fig:calafex1_u}}
\end{figure}

\begin{figure}
  \centerline{\begin{overpic}[scale=0.48, trim = 24 10 38 25, clip=true]{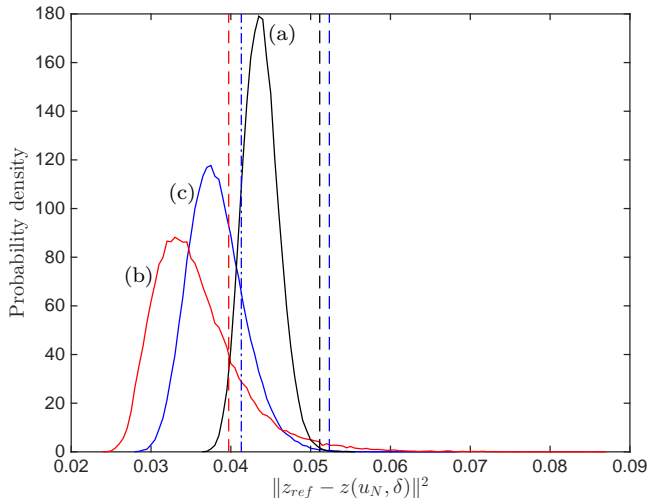}
      \put(40.5,72){(a)}
      \put(25,47){(c)}
      \put(18,34){(b)}
    \end{overpic}}
  \caption{Empirical probability densities of $\|z_{\textit{ref}} - z(u_N,\delta)\|^2$ for case (a) (black), case (b) (red), and (c) (blue).
    In each case the dashed lines show $R_1^2$ and the dash-dotted line $R_2^2$.
    \label{fig:calafex1_pdf}}
\end{figure}

\begin{figure}
  \centerline{\begin{overpic}[scale=0.48, trim = 24 10 38 25, clip=true]{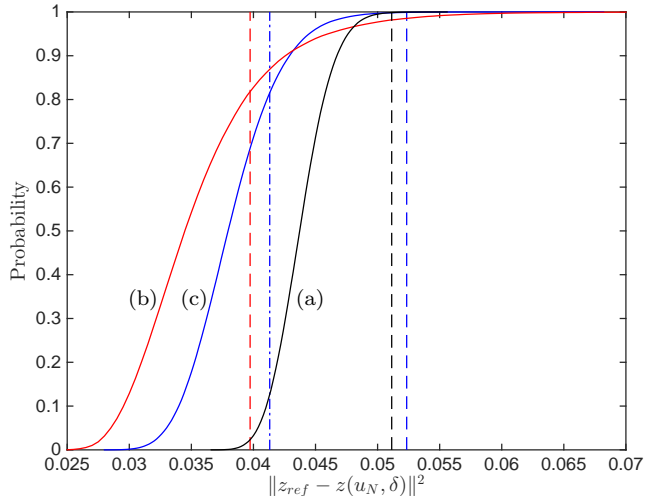}
      \put(45.5,30){(a)}
      \put(27.5,30){(c)}
      \put(19.3,30){(b)}
    \end{overpic}}
  \caption{Empirical distribution functions for $\|z_{\textit{ref}} - z(u_N,\delta)\|^2$: case (a) (black), case (b) (red), and (c) (blue). In each case the dashed lines show $R_1^2$ and the dash-dotted line $R_2^2$.
    \label{fig:calafex1_cdf}}
\end{figure}

\section{Conclusions}\label{sec:conclusions}

%
This paper  considers sample approximations of chance-con\-strained optimization problems, and shows that the use of random sample selection strategies in the definition of the sampled problem allows for tighter bounds on the confidence of feasibility with respect to the original chance constraints than sample discarding strategies based on optimality or greedy heuristics.
A randomised sample selection can be obtained by solving a sampled problem and determining the number of additionally extracted sampled constraints that are violated by the solution.
Using this observation, we propose a randomised algorithm for chance-constrained convex programming with tight a priori and a posteriori confidence bounds.
The relationships between optimal costs for the sampled problem and the chance-constrained problem are considered, and extensions of the approach to multiple chance constraints are discussed.


\appendix

\section{Omitted proofs}\label{apdx:proof_sec3}

\noindent\textbf{Proof of Lemma~\ref{lem:prior_prob}.}
For $v\in[0,1]$, let $F_{k}(v)$ denote the probability distribution of $\rV_k(\omega_{k})$ for given $k\leq m$, i.e. 
\[
F_{k}(v) = \PP^m \bigl\{ \rV_k(\omega_k) \leq v \bigr\} .
\]
Now suppose that $\rEs_k(\omega_q)$ is equal to $\omega_k$ for some $q$ such that $k\leq q\leq m$.
This event is equivalent to the event that $\delta^{(i)} \notin \rEs_k( \omega_k\cup \{\delta^{(i)}\})$ for $i=k+1,\ldots,q$,
and its probability, conditioned on the assumption that $\rV_k(\omega_k)$ is equal to $v$, is given by
\[
\PP^m \bigl\{ \omega_k = \rEs_k(\omega_q) \ \big| \ \rV_k(\omega_k) = v \bigr\} = (1-v)^{q-k}.
\] 
Using the definition of conditional probability we therefore have
\begin{equation}\label{eq:prior_prob}
\PP^m \bigl\{ \omega_k = \rEs_k(\omega_q) \cap \rV_k(\omega_k) = v  \bigr\} = (1-v)^{q-k} \, \mathrm{d} F_{k}(v) 
\end{equation}
and from the continuous version of the law of total probability it follows that 
\[
\PP^m \bigl\{ \omega_k = \rEs_k(\omega_q) \bigr\} = \int_0^1 (1-v)^{q-k} \ \mathrm{d} F_{k}(v) .
\]
But $\omega_k$ is statistically identical to a randomly selected $k$-element subset of $\omega_q$ and, from Assumption~\ref{ass:greedy_feas} and Definition~\ref{def:ress}, $\rEs_k(\omega_q)$ is almost surely unique. 
Therefore the probability that $\omega_k$ is equal to $\rEs_k(\omega_q)$ is given by the reciprocal of the number of distinct $k$-element subsets of $\omega_q$, and hence
\begin{equation}\label{eq:hausdorff_moment_prob}
\int_0^1 (1-v)^{q-k} \ \mathrm{d} F_{k}(v) = \binom{q}{k}^{-1} 
\end{equation}
necessarily holds for all $q\geq  k$.
A solution for $F_{k}$ is given by $F_{k}(v) = v^k$, and moreover it can be shown that this solution is unique (since (\ref{eq:hausdorff_moment_prob}) is equivalent to a Hausdorff moment problem~\cite[Sec.~VII.3]{feller66}).
\qed\vv



\noindent\textbf{Proof of Lemma~\ref{lem:selec_prob}}
%
  From the definition of $\rOmega_{q,k}(\omega_m)$ in (\ref{eq:reg_lq_subsets}), it follows that $\omega_k$ is equal to the regularized essential set $\rEs_k(\omega)$ for some $\omega\in\rOmega_{q,k}(\omega_m)$ if and only if $q-k$ of the $m-k$ samples $\delta$ contained in $\omega_m\setminus\omega_{k}$ satisfy $\delta\notin \rEs_k(\omega_k\cup\{\delta\})$, and the remaining $m-q$ samples satisfy $\delta\in \rEs_k(\omega_k\cup\{\delta\})$.
%
The probability of this event, conditioned on the regularized violation probability $\rV_k(\omega_{k})$ being equal to $v$, is 
\[
\PP^m \bigl\{ \omega_k = \rEs_k(\omega) \cap \omega\in\rOmega_{q,k}(\omega_m) \ \big| \ \rV_k(\omega_k) = v \bigr\} 
= \binom{m-k}{q-k} (1-v)^{q-k} v^{m-k} .
\]
Therefore, from the definition of conditional probability, we obtain
\begin{equation}\label{eq:prob_essential_and_viol_prob}
\PP^m \bigl\{ \omega_k = \rEs_k(\omega) \cap \omega\in\rOmega_{q,k}(\omega_m) \cap \rV_k(\omega_k) =v \bigr\} 
= \binom{m-k}{q-k} (1-v)^{q-k} v^{m-q} \, \mathrm{d} F_{k}(v),
\end{equation}
where $F_{k}(v) = v^k$ by Lemma~\ref{lem:prior_prob}.
Hence we have
\[
\PP^m \bigl\{ \omega_k = \rEs_k(\omega) \cap \omega\in\rOmega_{q,k}(\omega_m) 
\bigr\} =
k \binom{m-k}{q-k} \int_{0}^1 (1-v)^{q-k} v^{m-q+k-1} \, \mathrm{d} v,
\]
and the result follows from the definition of the beta function, $B(\cdot,\cdot)$ (see e.g.~\cite{abramowitz72}).
\qed\vv

\noindent\textbf{Proof of Lemma~\ref{lem:feas_bound}}
Proceeding as in the proof of Lemma~\ref{lem:selec_prob}, we first consider the probability that $\omega_k$ is the regularized essential set $\rEs_k(\omega)$ for some $\omega\in\rOmega_{q,k}(\omega_m)$ with $k\leq q \leq m$.
From~(\ref{eq:prob_essential_and_viol_prob}) and Lemma~\ref{lem:prior_prob} we obtain
\begin{align*}
\PP^m \bigl\{ \omega_k = \rEs_k(\omega) \cap \omega\in\rOmega_{q,k}(\omega_m) \cap \rV_k(\omega_k) \leq \epsilon \bigr\}
&= k\binom{m-k}{q-k} \int_0^\epsilon  (1-v)^{q-k} v^{m-q+k-1} \, \mathrm{d} v \\
&= k\binom{m-k}{q-k} B(\epsilon; m-q+k, q-k+1 ) 
\end{align*}
where $B(\cdot; \cdot, \cdot)$ is the incomplete beta function~\cite{abramowitz72}.
Using Lemma~\ref{lem:selec_prob} and the definition of conditional probability 
we obtain
\begin{align}\label{eq:cond_prob1}
\PP^m \bigl\{  \rV_k(\omega_k) \leq \epsilon 
\ | \  
\omega_k &= \rEs_k(\omega) \cap \omega \in\rOmega_{q,k}(\omega_m) \bigr\}
\\
& \quad =
\frac{
\PP^m \bigl\{ \omega_k = \rEs_k(\omega) \cap \omega\in\rOmega_{q,k}(\omega_m) \cap  \rV_k(\omega_k) \leq \epsilon \}
}{
\PP^m \bigl\{ \omega_k = \rEs_k(\omega) \cap \omega\in\rOmega_{q,k}(\omega_m) \bigr\}
}
\nonumber \\
& \quad =
\frac{B(\epsilon; m-q+k, q-k+1 )}{B(m-q+k, q-k+1 )} 
\nonumber \\
& \quad = \Bincdf(q-k; m, 1-\epsilon). 
  \nonumber
\end{align}
But the statistical independence of $(\delta^{(i)},\lambda^{(i)})$ and the index $i$ imply that the probability of $\rV_k(\omega_k)\leq \epsilon$ conditioned on $\omega_k = \rEs_k(\omega)\cap \omega \in \rOmega_{q,k}(\omega_m)$ is identical to the probability of the same event conditioned on $\omega_k = \rEs_k(\omega_q) \cap \omega_q \in\rOmega_{q,k}(\omega_m)$, so that
\begin{multline*}
  \PP^m \bigl\{  \rV_k(\omega_k) \leq \epsilon 
  \ | \  
  \omega_k = \rEs_k(\omega) \cap \omega \in\rOmega_{q,k}(\omega_m) \bigr\}
  \\ =
  \PP^m \bigl\{  \rV_k(\omega_k) \leq \epsilon 
  \ | \  
  \omega_k = \rEs_k(\omega_q) \cap \omega_q\in\rOmega_{q,k}(\omega_m) \bigr\} .
\end{multline*}
Furthermore, if $\omega_k = \rEs_k(\omega_q)$, then
$\rV_k(\omega_k) = \rV_k(\omega_q)$
almost surely, and hence
\[ 
  \PP^m \bigl\{  \rV_k(\omega_k) \leq \epsilon 
  \ | \  
  \omega_k = \rEs_k(\omega_q) \cap \omega_q\in\rOmega_{q,k}(\omega_m) \bigr\}
  =
  \PP^m \bigl\{  \rV_k(\omega_q) \leq \epsilon 
  \ | \  
  \omega_q\in\rOmega_{q,k}(\omega_m) 
  \bigr\} .
\] 
From~(\ref{eq:cond_prob1}) it therefore follows that 
$\PP^m \bigl\{ \rV_k(\omega_q) \leq \epsilon \ \big| \ \omega_q\in\rOmega_{q,k}(\omega_m) \bigr\}
=
\Bincdf(q-k; m, 1-\epsilon) .
$\qed\vv

\noindent\textbf{Proof of Lemma~\ref{lem:prior_bound}}
We begin by determining the probability distribution of $\rV_k(\omega_{\q0})$, for $k$ and $\q0$ satisfying $0\leq k \leq \q0 \leq m$. 
%
Adopting the approach used in the proof of Lemma~\ref{lem:prior_prob} and using (\ref{eq:prior_prob}) with $F_{k}(v)=v^k$ we have
\[
\PP^m \bigl\{ \omega_k = \rEs_k(\omega_{\q0}) \cap \rV_k ( \omega_k) = v \bigr\} = k (1-v)^{\q0-k} v^{k-1} \, \mathrm{d} v .
\]
But the definition of $\rOmega_{q,k}(\omega_m)$ in~(\ref{eq:reg_lq_subsets}) implies that $\omega_{\q0} = \rOmega_{\q0,k}(\omega_{\q0})$, and from Lemma~\ref{lem:selec_prob} 
therefore, the probability that $\omega_k$ is equal to the regularized essential set $\rEs_k(\omega_{\q0})$ is  $kB(k,\q0-k+1)$.
From the definition of conditional probability it follows that
\[
\PP^m \bigl\{ \rV_k (\omega_k) = v \ \big| \ \omega_k = \rEs_k(\omega_{\q0}) \bigr\}
= 
\frac{(1-v)^{\q0 - k} v^{k-1}}{B(k, \q0-k+1)}  \ \mathrm{d} v ,
\]
and since $\omega_k$ is statistically identical to a randomly selected $k$-element subset of $\omega_{\q0}$, this implies that the probability distribution of $\rV_k(\omega_{\q0})$ satisfies
\begin{equation}\label{eq:xdef_soln_viol_prob}
\PP^m \bigl\{ \rV_k (\omega_{\q0}) = v \bigr\} 
= 
\frac{(1-v)^{\q0 - k} v^{k-1} }{B(k, \q0-k+1)} \ \mathrm{d} v .
\end{equation}

This distribution can be used to determine the probability that $\rtheta_{\q0,k}(\omega_m) = q$ for $r \leq q \leq m$.
Specifically, from the definition~(\ref{eq:reg_lq_set_def}) we have that  $\smash{\rtheta_{\q0,k}}(\omega_m) = q$ if and only if
$\rEs_k(\omega_{\q0})=\rEs_k(\omega_{\q0}\cup \omega)$ for some $\omega \subseteq\omega_m\setminus \omega_{\q0}$ such that 
$\lvert \omega \rvert = q-r$,
and this occurs if and only if $q-\q0$ of the 
$m-\q0$ 
samples $\delta\in\omega_m\setminus\omega_{\q0}$ satisfy $\delta \notin \rEs_k(\omega_{\q0}\cup\{\delta\})$ and the remaining $m-q$ samples satisfy $\delta \in \rEs_k(\omega_{\q0}\cup\{\delta\})$. Hence from (\ref{eq:xdef_soln_viol_prob}), we have
\begin{equation}\label{eq:prior_bound_proof_eq}
\PP^m \bigl\{ \rtheta_{\q0,k}(\omega_m) = q \cap \rV_k (\omega_{\q0}) = v \bigr\} 
= 
\binom{m-r}{q-r} \frac{(1-v)^{q - k} v^{m-q+k-1}}{B(k, \q0-k+1)}  \, \mathrm{d} v
\end{equation}
Using the continuous version of the total probability law we therefore obtain
\[
\PP^m \bigl\{ \rtheta_{\q0,k}(\omega_m) = q \bigr\} 
= 
\binom{m-r}{q-r} \int_0^1 \frac{(1-v)^{q - k} v^{m-q+k-1}}{B(k, \q0-k+1)}  \, \mathrm{d} v 
\]
and~(\ref{eq:prior_bound_proof_eq2}) follows from the definition of the beta function, $B(\cdot,\cdot)$.
\qed\vv
\noindent\textbf{Proof of Corollary~\ref{cor:lower_costbnd}}
By optimality, $J^\ast(\omega_{\q0})$ can be no less than $J^o(\epsilon)$ if the solution $x^\ast(\omega_{\q0})$ of (\ref{eq:xdef}) satisfies the constraints of (\ref{eq:ccp}). 
Conditioned on $\theta_{\q0}(\omega_m) = q$, this implies
\[
\PP^m\bigl\{
V\bigl(x^\ast(\omega_{\q0}) \bigr) \leq \epsilon \ \big| \ \theta_{\q0}(\omega_m) = q
\bigr\}
\leq
\PP^m \bigl\{ J^\ast(\omega_{\q0}) \geq J^o(\epsilon) \ \big| \ \theta_{\q0}(\omega_m) = q \bigr\} ,
\]
which, combined with the confidence bound in~(\ref{eq:post_lower_bound_ineq}), yields~(\ref{eq:lower_costbnd}).
\qed\vv

\noindent\textbf{Proof of Theorem~\ref{thm:upper_costbnd}}
The bound (\ref{eq:upper_costbnd}) follows from the observation that the optimal objective of (\ref{eq:xdef}) must be less than or equal to that of~(\ref{eq:ccp}) whenever the solution of~(\ref{eq:ccp}) is a feasible solution for~(\ref{eq:xdef}). But the solution of~(\ref{eq:ccp}), which we denote as $x^o(\epsilon)$, satisfies the constraints of~(\ref{eq:xdef}) with $\omega=\omega_{\q0}$ if $f(x^o(\epsilon),\delta) \leq 0$ for all $\delta\in\omega_{\q0}$, and since $x^o(\epsilon)$ satisfies the constraints of~(\ref{eq:ccp}), so that $\PP\{ f(x^o(\epsilon) ,\delta) \leq 0\} \geq 1-\epsilon$, we have
\[
\PP^m\bigl\{  J^\ast(\omega_{\q0}) \leq J^o(\epsilon)\bigr\} 
\geq 
\bigl(\PP\bigl\{ f(x^o(\epsilon) ,\delta) \leq 0\bigr\}\bigr)^{\q0} 
\geq
(1-\epsilon)^{\q0}
=
1 - \Bincdf(\q0-1; \q0, 1-\epsilon) ,
\]
which implies~(\ref{eq:upper_costbnd}) because
$1-\PP^m \{ J^\ast(\omega_{\q0}) \leq J^o(\epsilon) \} = \PP^m \{ J^\ast (\omega_{\q0})>  J^o(\epsilon)]$.
\qed\vv

\noindent\textbf{Proof of Theorem~\ref{thm:alg_prior_post_bnds}}
A direct consequence of Theorem~\ref{thm:post_feas_bound} is that the solution $x^\ast(\omega_{\qstar}^{(i)})$ computed in step~(ii) necessarily satisfies, for each $i\in\{1,\ldots,\Ntrial\}$,
  \[
    \Bincdf(q-\bar{\zeta}; m , 1-\epsilon) 
    \leq
    \PP^{\Ntrial\, m} \bigl\{ V\bigl(x^\ast(\omega_{\qstar}^{(i)})\bigr) \leq \epsilon \ | \ \theta_{\qstar}(\omega_m^{(i)}) = q
    \bigr\}
    \leq
    \Bincdf(q-\underline{\zeta}; m, 1-\epsilon)  .
  \]
With $i=\istar$, these inequalities imply the posterior bounds (\ref{eq:posterior_viol_bound0})  and (\ref{eq:posterior_viol_bound}).
To demonstrate the prior bounds~(\ref{eq:prior_viol_bound}), we first evaluate the posterior bounds in~(\ref{eq:posterior_viol_bound}) for $\epsilon = \underline{\epsilon}$ and $\epsilon = \bar{\epsilon}$, and use
the definitions of $\underline{q}$ and $\bar{q}$ in step~(i) to obtain
  \begin{alignat*}{2}
    \PP^{\Ntrial\, m}\bigl\{ V(\xsol) \leq \bar{\epsilon} \ | \ \qsol = q \bigr\} &\geq \tfrac{1}{2}(1+ \ppost) &
   \ \ &\text{for all } q\geq \underline{q} \\
    \PP^{\Ntrial\, m}\bigl\{ V(\xsol) \leq \underline{\epsilon} \ | \ \qsol = q \bigr\} &\leq \tfrac{1}{2}(1- \ppost) &
   \ \ &\text{for all } q\leq \bar{q} .
 \end{alignat*}
 Hence Boole's inequality gives
 \begin{equation}\label{eq:booles_ineq}
   \PP^{\Ntrial\, m}\bigl\{ V(\xsol) \in (\underline{\epsilon}, \bar{\epsilon} \, ] \ | \ \qsol = q \bigr\} \geq \ppost \ \ \text{for all } \underline{q}\leq q \leq \bar{q} ,
 \end{equation}
and by combining this bound with the law of total probability we obtain
\begin{align*}
  \PP^{\Ntrial\, m} \bigl\{ V(\xsol) \in  (\underline{\epsilon}, \bar{\epsilon} \, ] \bigr\}
  & \geq \sum_{q = \underline{q}}^{\bar{q}}
    \PP^{\Ntrial\, m} \bigl\{ V(\xsol) \in (\underline{\epsilon}, \bar{\epsilon} \, ] \ | \ \qsol = q \bigr\} \
    \PP^{\Ntrial\, m} \bigl\{ \qsol = q \bigr\}
  \\
  & \geq \ppost\ \PP^{\Ntrial\, m} \bigl\{ \qsol \in [\underline{q} , \bar{q}] \bigr\} .
\end{align*}
But Theorem~\ref{thm:prior_bound} and the definitions of $\qstar$ and $\ptrial$ imply
$\PP^m\bigl\{ {\theta_{\qstar}(\omega_m^{(i)})} \in [\underline{q},\bar{q}]\bigr\} \geq \ptrial$ for each $i$,
%
and, noting that the definition of $\istar$ in step~(iii) implies $\qsol \in [\underline{q} , \bar{q}]$ if and only if $\smash{\theta_{\qstar}(\omega_m^{(i)})} \in  [\underline{q} , \bar{q}]$ 
 for some $i\in\{1,\ldots,\Ntrial\}$, it follows that
 \[
   \PP^{\Ntrial\, m}\bigl\{ \qsol \in [\underline{q},\bar{q}] \bigr\}
   = 1 - \PP^{\Ntrial\, m}\bigl\{ \qsol \not\in [\underline{q},\bar{q}] \bigr\}
   \geq 1 - (1-\ptrial)^{\Ntrial} 
 \]
since the multisamples $\omega^{(i)}_m$, $i=1,\ldots,\Ntrial$, are independent.
Therefore
\[
   \PP^{\Ntrial\, m} \bigl\{ V(\xsol) \in  (\underline{\epsilon}, \bar{\epsilon} \, ] \bigr\}
   \geq \ppost \bigl( 1 - (1-\ptrial)^{\Ntrial}\bigr),
 \]
 and the bound (\ref{eq:prior_viol_bound}) is implied by the definition of $\Ntrial$ in step~(ii).
\qed

\bibliographystyle{spmpsci}
\bibliography{smpc_confidencebound}

\end{document}